\begin{document}
\input xy
\xyoption{all}

\renewcommand{\mod}{\operatorname{mod}\nolimits}
\newcommand{\proj}{\operatorname{proj.}\nolimits}
\newcommand{\inj}{\operatorname{inj.}\nolimits}
\newcommand{\rad}{\operatorname{rad}\nolimits}
\newcommand{\soc}{\operatorname{soc}\nolimits}
\newcommand{\ind}{\operatorname{inj.dim}\nolimits}
\newcommand{\Ginj}{\operatorname{Ginj}\nolimits}
\newcommand{\Mod}{\operatorname{Mod}\nolimits}
\newcommand{\R}{\operatorname{R}\nolimits}
\newcommand{\End}{\operatorname{End}\nolimits}
\newcommand{\ob}{\operatorname{Ob}\nolimits}
\newcommand{\Ht}{\operatorname{Ht}\nolimits}
\newcommand{\cone}{\operatorname{cone}\nolimits}
\newcommand{\rep}{\operatorname{rep}\nolimits}
\newcommand{\Ext}{\operatorname{Ext}\nolimits}
\newcommand{\Tor}{\operatorname{Tor}\nolimits}
\newcommand{\Hom}{\operatorname{Hom}\nolimits}
\newcommand{\Pic}{\operatorname{Pic}\nolimits}
\newcommand{\Coker}{\operatorname{Coker}\nolimits}
\newcommand{\Div}{\operatorname{Div}\nolimits}
\newcommand{\rank}{\operatorname{rank}\nolimits}
\newcommand{\Gproj}{\operatorname{Gproj}\nolimits}
\newcommand{\Len}{\operatorname{Length}\nolimits}
\newcommand{\RHom}{\operatorname{RHom}\nolimits}
\renewcommand{\deg}{\operatorname{deg}\nolimits}
\renewcommand{\Im}{\operatorname{Im}\nolimits}
\newcommand{\Ker}{\operatorname{Ker}\nolimits}
\newcommand{\Coh}{\operatorname{Coh}\nolimits}
\newcommand{\Id}{\operatorname{Id}\nolimits}
\newcommand{\Qcoh}{\operatorname{Qch}\nolimits}
\newcommand{\CM}{\operatorname{CM}\nolimits}
\newcommand{\Cp}{\operatorname{Cp}\nolimits}
\newcommand{\coker}{\operatorname{Coker}\nolimits}
\renewcommand{\dim}{\operatorname{dim}\nolimits}
\renewcommand{\div}{\operatorname{div}\nolimits}
\newcommand{\Ab}{{\operatorname{Ab}\nolimits}}
\renewcommand{\Vec}{{\operatorname{Vec}\nolimits}}
\newcommand{\pd}{\operatorname{proj.dim}\nolimits}
\newcommand{\id}{\operatorname{inj.dim}\nolimits}
\newcommand{\Gd}{\operatorname{G.dim}\nolimits}
\newcommand{\sdim}{\operatorname{sdim}\nolimits}
\newcommand{\add}{\operatorname{add}\nolimits}
\newcommand{\pr}{\operatorname{pr}\nolimits}
\newcommand{\oR}{\operatorname{R}\nolimits}
\newcommand{\oL}{\operatorname{L}\nolimits}
\newcommand{\Perf}{{\mathfrak Perf}}
\newcommand{\cc}{{\mathcal C}}
\newcommand{\ce}{{\mathcal E}}
\newcommand{\cs}{{\mathcal S}}
\newcommand{\cf}{{\mathcal F}}
\newcommand{\cx}{{\mathcal X}}
\newcommand{\ct}{{\mathcal T}}
\newcommand{\cu}{{\mathcal U}}
\newcommand{\cv}{{\mathcal V}}
\newcommand{\cn}{{\mathcal N}}
\newcommand{\ch}{{\mathcal H}}
\newcommand{\ca}{{\mathcal A}}
\newcommand{\cb}{{\mathcal B}}
\newcommand{\ci}{{\mathcal I}}
\newcommand{\cj}{{\mathcal J}}
\newcommand{\cm}{{\mathcal M}}
\newcommand{\cp}{{\mathcal P}}
\newcommand{\cg}{{\mathcal G}}
\newcommand{\cw}{{\mathcal W}}
\newcommand{\co}{{\mathcal O}}
\newcommand{\cd}{{\mathcal D}}
\newcommand{\ck}{{\mathcal K}}
\newcommand{\calr}{{\mathcal R}}
\newcommand{\ol}{\overline}
\newcommand{\ul}{\underline}
\newcommand{\st}{[1]}
\newcommand{\ow}{\widetilde}
\renewcommand{\P}{\mathbf{P}}
\newcommand{\pic}{\operatorname{Pic}\nolimits}
\newcommand{\Spec}{\operatorname{Spec}\nolimits}
\newtheorem{theorem}{Theorem}[section]
\newtheorem{acknowledgement}[theorem]{Acknowledgement}
\newtheorem{algorithm}[theorem]{Algorithm}
\newtheorem{axiom}[theorem]{Axiom}
\newtheorem{case}[theorem]{Case}
\newtheorem{claim}[theorem]{Claim}
\newtheorem{conclusion}[theorem]{Conclusion}
\newtheorem{condition}[theorem]{Condition}
\newtheorem{conjecture}[theorem]{Conjecture}
\newtheorem{construction}[theorem]{Construction}
\newtheorem{corollary}[theorem]{Corollary}
\newtheorem{criterion}[theorem]{Criterion}
\newtheorem{definition}[theorem]{Definition}
\newtheorem{example}[theorem]{Example}
\newtheorem{exercise}[theorem]{Exercise}
\newtheorem{lemma}[theorem]{Lemma}
\newtheorem{notation}[theorem]{Notation}
\newtheorem{problem}[theorem]{Problem}
\newtheorem{proposition}[theorem]{Proposition}
\newtheorem{remark}[theorem]{Remark}
\newtheorem{solution}[theorem]{Solution}
\newtheorem{summary}[theorem]{Summary}
\newtheorem*{thm}{Theorem}

\def \bp{{\mathbf p}}
\def \bA{{\mathbf A}}
\def \bL{{\mathbf L}}
\def \bF{{\mathbf F}}
\def \bS{{\mathbf S}}
\def \bC{{\mathbf C}}

\def \Z{{\Bbb Z}}
\def \F{{\Bbb F}}
\def \C{{\Bbb C}}
\def \N{{\Bbb N}}
\def \Q{{\Bbb Q}}
\def \G{{\Bbb G}}
\def \P{{\Bbb P}}
\def \K{{\Bbb K}}
\def \E{{\Bbb E}}
\def \A{{\Bbb A}}
\def \BH{{\Bbb H}}
\def \T{{\Bbb T}}
\newcommand {\lu}[1]{\textcolor{red}{$\clubsuit$: #1}}

\title[singularity categories of the Cluster-tilted algebras]{The singularity categories of the Cluster-tilted algebras of Dynkin type}

\author[Chen]{Xinhong Chen}
\address{Department of Mathematics, Southwest Jiaotong University, Chengdu 610031, P.R.China}
\email{chenxh2007@aliyun.com}

\author[Geng]{Shengfei Geng}
\address{Department of Mathematics, Sichuan University, Chengdu 610064, P.R.China}
\email{genshengfei@scu.edu.cn}

\author[Lu]{Ming Lu$^\dag$}
\address{Department of Mathematics, Sichuan University, Chengdu 610064, P.R.China}
\email{luming@scu.edu.cn}
\thanks{$^\dag$ Corresponding author}

\subjclass[2000]{16E45, 18E30, 18E35}
\keywords{cluster-tilted algebras; good mutation; Gorenstein algebras; Gorenstein projective modules; singularity category}

\begin{abstract}We use the stable categories of some selfinjective algebras to describe the singularity categories of the cluster-tilted algebras of Dynkin type. Furthermore, in this way, we settle the problem of singularity equivalence classification of the cluster-tilted algebra of type $A$, $D$ and $E$ respectively.
\end{abstract}

\maketitle

\section{Introduction}
Cluster algebras have been introduced by Fomin and Zelevinsky around 2000 \cite{FZ1}. In attempt to categorify cluster algebras, cluster categories have been introduced by Buan, Marsh, Reiten, Reineke and Todorov in \cite{BMRRT}. More precisely, the cluster category is associated with a finite dimensional hereditary algebra $H$ over a field $K$, and defined as the quotient $\cc_H$ of the bounded derived category $D^b(\mod H)$ of finitely generated modules over $H$ by the functor $\Sigma \tau^{-1}$, where $\tau$ denotes the AR-translation and $\Sigma$ denotes the shift functor. The cluster category is triangulated \cite{Ke}, see also \cite{CCS1} for the cluster category of type $A_n$. In fact, the cluster categories are Calabi-Yau of dimension two \cite{BMRRT,Ke}.

Importantly, in the categorification of cluster algebras, cluster tilting objects categorify clusters of the corresponding cluster algebras, and the combinatorics structure of cluster tilting objects is the same as the combinatorics structure of the corresponding cluster algebras \cite{CC,CK}.

For a cluster tilting object $T$, its endomorphism algebra $\Gamma=\End_{\cc_H}(T)^{op}$ is called the cluster-tilted algebra \cite{BMR1}. The module category of finitely generated $\Gamma$-modules was explicitly described in \cite{BMR1}. In particular, the cluster-tilted algebra is a path algebra of a quiver with relations, and it was proved that a cluster-tilted algebra of finite type is uniquely determined by its quiver \cite{BMR2}. Moreover, the relations were explicitly described in \cite{CCS2} for a cluster-tilted algebra of finite type. In fact, the quivers of the cluster-tilted algebras of Dynkin type $Q$ are precisely the ones obtained from $Q$ by performing finitely many quiver mutations.

The derived equivalence classification of cluster-tilted algebras of Dynkin type has been settled by Buan and Vatne for cluster-tilted algebras of type $A_n$ \cite{BV}, and by Bastian, Holm and Ladkani for cluster-tilted algebras of type $E_6,E_7,E_8$\cite{BHL1}. It turns out that two cluster-tilted algebras of type $A_n$ are derived equivalent if and only if their quivers have the same number of $3$-cycles;
Two cluster-tilted algebra of Dynkin type $E$ are derived equivalent if and only if their Cartan matrices represent equivalent bilinear forms over $\Z$. This in turn happens if and only if the Cartan matrices of the algebras have the same determinant and the same characteristic polynomial of their asymmetry matrices.
For Dynkin type $D_n$, Bastian, Holm and Ladkani consider the mutations of quivers which preserve derived equivalences, and get a far reaching derived equivalence classification and suggest standard forms for the derived equivalence classes \cite{BHL2}.

For an algebra, we know that the singularity category is also an important triangulated category. The singularity category of an algebra is defined to be the Verdier quotient of the bounded derived category with respect to the thick subcategory formed by complexes isomorphic to bounded complexes of finitely generated projective modules \cite{Bu,Ha1,Or1}. In this paper, we address the problem of singularity equivalence classification of the cluster-tilted algebra of Dynkin type. Furthermore, it was proved by Keller and Reiten that the cluster-tilted algebras are Gorenstein of dimension at most $1$ \cite{KR}. A fundamental result of Buchweitz \cite{Bu} and Happel \cite{Ha1} states that for a Gorenstein algebra $A$, the singularity category is triangle equivalent to the stable category of (maximal) Cohen-Macaulay (also called Gorenstein projective) $A$-modules.

In this paper, we find some selfinjective algebra, whose stable category is triangulated equivalent to the singularity category of the cluster-tilted algebra of Dynkin type. For cluster-tilted algebras of type $A$, since they are gentle algebras, it was settled by Kalck in \cite{Ka}. We use the good mutation defined in \cite{BHL2} to give a direct proof, see Corollary \ref{corollary CM module of cluster-tilted algebra of type A}. For type $D$, using the description of its quiver by Geng and Peng \cite{GP}, Vatne \cite{V}, we get some selfinjective algebras, whose stable categories are equivalent to the singularity categories of each type, see Theorem \ref{theorem singularity category of D}. For type $E$, use the derived equivalence classification in \cite{BHL1}, we get
that two cluster-tilted algebras of Dynkin type $E$ are singularity equivalent if and only if the Cartan matrices of the algebras have the same determinant and the
same trace of their asymmetry matrices, see Theorem \ref{theorem singularity category of E}.

\vspace{0.2cm} \noindent{\bf Acknowledgments.}
This work is inspired by some discussions with Changjian Fu. The authors thank him very much. The authors also thank the referee for very helpful and insightful comments.

The first author(X. Chen) was supported by the Fundamental Research Funds for the Central Universities A0920502051411-45.

\section{Preliminaries}
In this paper, we denote by $S_l$ the self-injective Nakayama algebra of a cyclic quiver with $l$ vertices modulo the ideal generated by paths of length $l-1$.
For a cluster-tilted algebra $KQ/I$ of type $A$, we denote by $t(Q)$ the number of the triangles in $Q$.
\subsection{Mutation of algebras}
We recall the notion of mutations of algebras from \cite{La}. Let $\Gamma=KQ/I$ be an algebra given as a quiver with relations. For any vertex $i$ of $Q$, there is a trivial path $e_i$ of length $0$; the corresponding indecomposable projective module $P_i=e_i\Gamma$ is spanned by the images of the paths starting at $i$. Thus an arrow $i\xrightarrow{\alpha}j$ gives rise to a map $P_j\rightarrow P_i$ given by left multiplication with $\alpha$.

Let $k$ be a vertex of $Q$ without loops. Consider the following two complexes of projective $\Gamma$-modules
$$T_k^{-}(\Gamma)=(P_k\xrightarrow{f} \bigoplus_{j\rightarrow k}P_j)\oplus(\bigoplus_{i\neq k}P_i), T_k^+(\Gamma)=(\bigoplus_{k\rightarrow j} P_j\xrightarrow{g} P_k )\oplus (\bigoplus_{i\neq k}P_i) $$
where the map $f$ is induced by all the maps $P_k\rightarrow P_j$ corresponding to the arrow $j\rightarrow k$ ending at $k$, the map $g$ is induced by the maps $P_j\rightarrow P_k$ corresponding to the arrow $k\rightarrow j$ starting at $k$, the term $P_k$ lies in degree $-1$ in $T^{-}_k(\Gamma)$ and in degree $1$ in
$T_k^+(\Gamma)$, and all other terms are in degree $0$.

\begin{definition}\cite{BHL2}
Let $\Gamma$ be an algebra given as a quiver with relations and $k$
a vertex without loops.

(a) We say that the negative mutation of $\Gamma$ at $k$ is defined if $T_k^-(\Gamma)$ is a tilting complex over $\Gamma$. In this case, we call the algebra
$\mu_k^-(\Gamma)=\End_{D^b(A)}(T^-_k(\Gamma))$ the negative mutation of $\Gamma$ at the vertex $k$.

(b) We say that the positive mutation of $\Gamma$ at $k$ is defined if $T_k^+(\Gamma)$ is a tilting complex over $\Gamma$. In this case, we call the algebra
$\mu_k^+(\Gamma)=\End_{D^b(\Gamma)}(T^+_k(\Gamma))$ the positive mutation of $\Gamma$ at the vertex $k$.
\end{definition}

Given a quiver $Q$ without loops and 2-cycles, and a vertex $k$, we denote by $\mu_k(Q)$ the Fomin-Zelevinsky quiver mutation of $Q$ at $k$. Two quivers are called \emph{mutation equivalent} if one can be reached from the other by a finite sequence of quiver mutations. We also denote by $\Lambda_Q$ the corresponding cluster-tilted algebra.

\begin{proposition}\cite{BHL2}\label{proposition definition good mutation }
Let $Q$ be mutation equivalent to a Dynkin quiver and let $k$ be a vertex of $Q$.

(a) $\Lambda_{\mu_k(Q)}\simeq\mu_k^-(\Lambda_Q)$ if and only if the two algebra mutations $\mu_k^-(\Lambda_Q)$ and $\mu_k^+(\Lambda_{\mu_k(Q)})$ are defined.

(b) $\Lambda_{\mu_k(Q)}\simeq\mu_k^+(\Lambda_Q)$ if and only if the two algebra mutations $\mu_k^+(\Lambda_Q)$ and $\mu_k^-(\Lambda_{\mu_k(Q)})$ are defined.

\end{proposition}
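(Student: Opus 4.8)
The plan is to recast the two algebra mutations as silting mutations in the bounded homotopy category of projectives, to exploit that the two mutations at a fixed vertex are mutually inverse, and then to identify the resulting algebra with $\Lambda_{\mu_k(Q)}$ through the cluster category. First I observe that, in the sense of Aihara--Iyama, the two-term complexes $T_k^-(\Gamma)$ and $T_k^+(\Gamma)$ are the two silting mutations of the silting complex $\Gamma$ at the indecomposable summand $P_k$. Such silting mutations always exist, and $\mu_k^-(\Gamma)$ (resp.\ $\mu_k^+(\Gamma)$) is defined precisely when the corresponding complex is \emph{tilting}, i.e.\ has no self-extensions in nonzero degrees. The single structural fact I need is that the two mutations at a fixed summand are mutually inverse: if $\mu_k^-(\Gamma)$ is defined, then under the derived equivalence $F\colon D^b(\Gamma)\xrightarrow{\sim}D^b(\mu_k^-(\Gamma))$ carrying $T_k^-(\Gamma)$ to $\mu_k^-(\Gamma)$, the complex $\Gamma$ is carried to $T_k^+(\mu_k^-(\Gamma))$; as $\Gamma$ is tilting, so is its image, whence $\mu_k^+(\mu_k^-(\Gamma))$ is automatically defined and isomorphic to $\Gamma$, and symmetrically with $+$ and $-$ interchanged.

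This inverse property immediately yields the forward implication of (a): if $\Lambda_{\mu_k(Q)}\simeq\mu_k^-(\Lambda_Q)$ then $\mu_k^-(\Lambda_Q)$ is defined and $\mu_k^+(\Lambda_{\mu_k(Q)})\simeq\mu_k^+(\mu_k^-(\Lambda_Q))$ is defined. It also shows that (a) and (b) are a single statement read from the two ends of the edge: substituting $\mu_k(Q)$ for $Q$ and using $\mu_k(\mu_k(Q))=Q$ turns (b) into (a). For the backward implication, the two hypotheses give $\mu_k^+(\mu_k^-(\Lambda_Q))\simeq\Lambda_Q$ and $\mu_k^-(\mu_k^+(\Lambda_{\mu_k(Q)}))\simeq\Lambda_{\mu_k(Q)}$; since $\mu_k^-$ and $\mu_k^+$ are inverse bijections wherever defined, the desired isomorphism $\mu_k^-(\Lambda_Q)\simeq\Lambda_{\mu_k(Q)}$ is equivalent to the single identification $\mu_k^+(\Lambda_{\mu_k(Q)})\simeq\Lambda_Q$. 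Thus the whole proposition reduces to one claim: under the stated hypotheses, $\mu_k^-(\Lambda_Q)\simeq\Lambda_{\mu_k(Q)}$.

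To prove this I pass to the cluster category $\cc_H$ with its cluster-tilting object $T=\bigoplus_iT_i$, $\End_{\cc_H}(T)^{op}=\Lambda_Q$, and its mutation $\mu_k(T)=(T/T_k)\oplus T_k^{*}$, for which $\End_{\cc_H}(\mu_k(T))^{op}=\Lambda_{\mu_k(Q)}$. Applying $\Hom_{\cc_H}(T,-)$ to the exchange triangle $T_k\to B\to T_k^{*}\to T_k[1]$, where $B$ is the minimal left $\add(T/T_k)$-approximation of $T_k$, reproduces (up to the degree shift in its definition) exactly the complex $T_k^-(\Lambda_Q)$, the approximation term $B$ matching the codomain $\bigoplus_{j\to k}P_j$ of the map $f$. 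It then remains to compute $\End_{D^b(\Lambda_Q)}(T_k^-(\Lambda_Q))$ and to match it with $\End_{\cc_H}(\mu_k(T))^{op}$, using the standard properties of $\Hom_{\cc_H}(T,-)$ together with the $2$-Calabi--Yau duality $\Ext^1_{\cc_H}(X,Y)\cong D\Ext^1_{\cc_H}(Y,X)$ to convert the derived self-extensions of $T_k^-(\Lambda_Q)$ into morphisms in $\cc_H$.

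The crux, and where the second hypothesis is consumed, is precisely this last matching. A priori $T_k^-(\Lambda_Q)$ is only silting, so $\End_{D^b(\Lambda_Q)}(T_k^-(\Lambda_Q))$ may differ from the module-theoretic algebra $\Lambda_{\mu_k(Q)}$ read off the cluster category; the discrepancy is measured by the self-extensions of the mutated complexes. Demanding that both $\mu_k^-(\Lambda_Q)$ and $\mu_k^+(\Lambda_{\mu_k(Q)})$ be defined forces the mutated silting complexes on both ends to be tilting, which kills these extensions and forces the two computations to agree; concretely this is what ensures that the $2$-cycles produced by the Fomin--Zelevinsky rule cancel correctly and that the defining relations of $\Lambda_{\mu_k(Q)}$ are recovered. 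I would therefore devote the bulk of the argument to checking that, with both tilting conditions in force, the quiver and relations of $\End_{D^b(\Lambda_Q)}(T_k^-(\Lambda_Q))$ are exactly those of $\Lambda_{\mu_k(Q)}$, appealing finally to the fact \cite{BMR2} that a cluster-tilted algebra of Dynkin type is determined by its quiver.
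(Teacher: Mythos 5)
The paper itself offers no proof of this proposition: it is quoted from \cite{BHL2}, where the argument rests on Ladkani's computation (in \cite{La}) of the quiver of the mutated algebra inside $K^b(\proj \Lambda_Q)$, combined with the fact \cite{BMR2} that a cluster-tilted algebra of Dynkin type is determined by its quiver. Measured against that, your formal reductions are sound: identifying $T_k^{\pm}(\Gamma)$ with the two silting mutations of $\Gamma$ at $P_k$, the mutual-inverse property does yield the forward implication of (a) and the interchangeability of (a) and (b), so the proposition correctly collapses to the single claim that the two tilting hypotheses force $\End_{D^b(\Lambda_Q)}(T_k^-(\Lambda_Q))\simeq\Lambda_{\mu_k(Q)}$.

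That remaining claim is the entire mathematical content of the proposition, and your proposal does not prove it --- it announces a plan, and the plan as described is doubtful. The functor $\Hom_{\cc_H}(T,-)$ lands in $\mod\Lambda_Q$ (it induces $\cc_H/\add(\tau T)\simeq\mod\Lambda_Q$), not in $D^b(\Lambda_Q)$, and there is no exact functor between $D^b(\Lambda_Q)$ and the $2$-Calabi--Yau category $\cc_H$ along which one could transport the computation of $\End_{D^b(\Lambda_Q)}(T_k^-(\Lambda_Q))$; since $\Lambda_Q$ and $\Lambda_{\mu_k(Q)}$ are in general \emph{not} derived equivalent (already for $A_3$ their Cartan determinants differ), any argument that reads the answer off the exchange triangle must locate precisely where the two tilting conditions enter, and your account of that mechanism (``kills these extensions'', ``the $2$-cycles cancel correctly'') is exactly the step requiring proof rather than a proof. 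The argument in the literature instead stays inside $K^b(\proj\Lambda_Q)$ and computes the arrows of the quiver of $\End(T_k^-(\Lambda_Q))$ directly from the approximation triangle, showing that the vanishing of $\Hom(T_k^-,T_k^-[-1])$ together with the corresponding vanishing on the mutated side is what makes this quiver equal to $\mu_k(Q)$. Finally, even granting the quiver computation, invoking \cite{BMR2} to conclude $\mu_k^-(\Lambda_Q)\simeq\Lambda_{\mu_k(Q)}$ requires knowing that $\mu_k^-(\Lambda_Q)$ is itself a cluster-tilted algebra of the same Dynkin type (or else verifying its relations independently); your closing sentence conflates these two options.
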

\begin{definition}\cite{BHL2}
When (at least) one of the conditions in the proposition holds, we say that the quiver mutation of $Q$ at $k$ is good, since it implies the derived equivalence of the corresponding cluster-tilted algebra $\Lambda_Q$ and $\Lambda_{\mu_k(Q)}$.
\end{definition}

\subsection{Gorenstein algebra and Gorenstein projective module}
Let $\Gamma$ be a finite-dimensional $K$-algebra. Let $\mod \Gamma$ be the category of finitely generated left $\Gamma$-modules. By $D=\Hom_K(-,K)$ we denote the standard duality with respect to the ground field. Then $_\Gamma D(\Gamma_\Gamma)$ is an injective cogenerator for $\mod \Gamma$. For an arbitrary $\Gamma$-module $_\Gamma X$, we denote by $\pd_\Gamma X$ (resp. $\id_\Gamma X$) the projective dimension (resp. the injective dimension) of the module $_\Gamma X$. An $\Gamma$-module $G$ is \emph{Gorenstein projective}, if there is an exact sequence $$P^\bullet:\cdots \rightarrow P^{-1}\rightarrow P^0\xrightarrow{d^0}P^1\rightarrow \cdots$$ of projective $\Gamma$-modules, which stays exact under $\Hom_\Gamma(-,\Gamma)$, and such that $G\cong \Ker d^0$. We denote by $\Gproj(\Gamma)$ the subcategory of Gorenstein projective modules.

\begin{definition}\cite{AR1,AR2,Ha1}
A finite dimensional algebra $\Gamma$ is called a Gorenstein algebra if $\Gamma$ satisfies $\pd_\Gamma D(\Gamma_\Gamma)<\infty$ and $\id_\Gamma \Gamma<\infty$.
\end{definition}

Observe that for a Gorenstein algebra $\Gamma$, we have $\id _\Gamma\Gamma=\id \Gamma_\Gamma$, \cite[Lemma 6.9]{Ha1}; the common value is denoted by $\Gd \Gamma$. If $\Gd \Gamma\leq d$, we say that $\Gamma$ is $d$-Gorenstein.

Let $\cx$ be a subcategory of $\mod \Gamma$. Then $^\bot\cx:=\{M|\Ext^i(M,X)=0, \mbox{ for all } X\in\cx, i\geq1\}$. In particular, we define $^\bot \Gamma:=^\bot (\mathrm{proj.} \Gamma)$, where $\mathrm{proj.}\Gamma$ is the category of finite generated projective $\Gamma$-modules.

\begin{theorem}\cite{AM,EJ}\label{theorem characterize of gorenstein property}
Let $\Gamma$ be an artin algebra and let $d\geq0$. Then the following statements are equivalent:

(1) the algebra $\Gamma$ is $d$-Gorenstein;

(2) $\Gproj(\Gamma)=\Omega^d(\mod \Gamma)$, where $\Omega$ is the syzygy functor.

In this case, A module $G$ is Gorenstein projective if and only if there is an exact sequence $0\rightarrow G\rightarrow P^0\rightarrow P^1\rightarrow \cdots$ with each
$P^i$ projective, we have $\Gproj(\Gamma)= ^\bot \Gamma$.
\end{theorem}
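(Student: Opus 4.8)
My plan is to prove both implications through a single chain of inclusions, establishing
$$\Gproj(\Gamma)\subseteq\Omega^d(\mod\Gamma)\subseteq{}^\bot\Gamma\subseteq\Gproj(\Gamma)$$
under the hypothesis of $(1)$, so that all three subcategories coincide. First I would record the two inclusions that hold over any artin algebra. If $G$ is Gorenstein projective with complete resolution $P^\bullet$, then writing $K^i=\Ker d^i$ gives short exact sequences $0\to K^i\to P^i\to K^{i+1}\to 0$, which exhibit $G=K^0$ as the $d$-th syzygy $\Omega^d(K^d)$ (up to projective summands); hence $\Gproj(\Gamma)\subseteq\Omega^d(\mod\Gamma)$. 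Since $P^\bullet$ stays exact under $\Hom_\Gamma(-,\Gamma)$, one also reads off $\Ext^i_\Gamma(G,\Gamma)=0$ for $i\ge1$, so $\Gproj(\Gamma)\subseteq{}^\bot\Gamma$. Next, invoking the hypothesis $\id_\Gamma\Gamma\le d$, dimension shifting along a projective resolution gives $\Ext^i_\Gamma(\Omega^d M,\Gamma)\cong\Ext^{i+d}_\Gamma(M,\Gamma)=0$ for every $M$ and $i\ge1$ (because $i+d>\id_\Gamma\Gamma$), yielding $\Omega^d(\mod\Gamma)\subseteq{}^\bot\Gamma$.

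The hard part will be the remaining inclusion ${}^\bot\Gamma\subseteq\Gproj(\Gamma)$, and this is exactly where the two-sided finiteness of the injective dimension of $\Gamma$ enters. Given $M\in{}^\bot\Gamma$, I must manufacture the right half of a complete resolution, namely an exact coresolution $0\to M\to P^0\to P^1\to\cdots$ by projectives that remains exact after applying $\Hom_\Gamma(-,\Gamma)$; splicing it with a projective resolution of $M$ then produces a complete resolution having $M$ as a cocycle. The standard device is to pass to the dual right module $M^\ast=\Hom_\Gamma(M,\Gamma)$: dualizing a projective resolution of $M$ and using $\Ext^i_\Gamma(M,\Gamma)=0$ for $i\ge1$ shows $M$ is reflexive, while the bound $\id\Gamma_\Gamma\le d$ forces $\Ext^i_{\Gamma^{\mathrm{op}}}(M^\ast,\Gamma)=0$ for $i\ge1$, so that dualizing a projective resolution of $M^\ast$ back over $\Gamma$ delivers the desired coresolution. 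I expect the bookkeeping of reflexivity and total acyclicity here to be the main obstacle; everything else is formal. With the three inclusions in hand they are forced to be equalities, and the two supplementary assertions drop out: a module admitting a coresolution $0\to M\to P^0\to\cdots$ is, up to projective summands, of the form $\Omega^d(C)$ for the $d$-th cokernel $C$, hence lies in $\Omega^d(\mod\Gamma)=\Gproj(\Gamma)$, and the identification $\Gproj(\Gamma)={}^\bot\Gamma$ is the outer equality of the chain.

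For the converse $(2)\Rightarrow(1)$, I would run the dimension shift backwards. From $\Omega^d(\mod\Gamma)=\Gproj(\Gamma)\subseteq{}^\bot\Gamma$ we get $\Ext^j_\Gamma(M,\Gamma)\cong\Ext^{j-d}_\Gamma(\Omega^dM,\Gamma)=0$ for all $M$ and all $j>d$, which says precisely $\id_\Gamma\Gamma\le d$. It remains to bound $\id\Gamma_\Gamma$ (equivalently $\pd_\Gamma D(\Gamma_\Gamma)$); here I would transport the syzygy description to $\Gamma^{\mathrm{op}}$ through the duality $\Hom_\Gamma(-,\Gamma)\colon\Gproj(\Gamma)\to\Gproj(\Gamma^{\mathrm{op}})$ together with the left–right symmetry of the Gorenstein projective dimension, and then repeat the backward dimension shift on the right. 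This symmetry step is the delicate point of this direction; granting it, $\id\Gamma_\Gamma\le d$ follows, $\Gamma$ is Gorenstein, and $\Gd\Gamma=\id_\Gamma\Gamma\le d$, which completes the proof.
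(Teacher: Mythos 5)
The paper offers no proof of this theorem: it is imported wholesale from \cite{AM,EJ}, so there is no in-paper argument to compare yours against, and I can only judge the sketch on its own terms. Your architecture --- the chain $\Gproj(\Gamma)\subseteq\Omega^d(\mod\Gamma)\subseteq{}^\bot\Gamma\subseteq\Gproj(\Gamma)$, the passage to $M^*=\Hom_\Gamma(M,\Gamma)$ for the hard inclusion, and the backward dimension shift for the converse --- is the standard one from those references, and the first three inclusions are correctly argued. But the step ``dualizing a projective resolution of $M$ and using $\Ext^i_\Gamma(M,\Gamma)=0$ for $i\ge1$ shows $M$ is reflexive'' is not a proof: that vanishing only makes $0\to M^*\to P_0^*\to P_1^*\to\cdots$ exact, and says nothing by itself about $M\to M^{**}$. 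To get reflexivity you must either control $\Ext^1_{\Gamma^{op}}(\mathrm{Tr}\,M,\Gamma)$ and $\Ext^2_{\Gamma^{op}}(\mathrm{Tr}\,M,\Gamma)$ through the Auslander--Bridger sequence, or invoke that $\RHom_\Gamma(-,\Gamma)$ is a duality on $D^b(\mod\Gamma)$ because $\id$ is finite \emph{on both sides} and observe that $\RHom_\Gamma(M,\Gamma)$ is concentrated in degree $0$ for $M\in{}^\bot\Gamma$; the latter route simultaneously yields $\Ext^i_{\Gamma^{op}}(M^*,\Gamma)=0$ and biduality. This is fillable, but it is precisely where the theorem's content lives, and your sketch does not yet say how the right-hand finiteness enters.

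The more serious gap is in $(2)\Rightarrow(1)$. Your dimension shift correctly gives $\id_\Gamma\Gamma\le d$, but the proposed route to $\id\Gamma_\Gamma\le d$ --- ``transport the syzygy description to $\Gamma^{op}$ through the duality $\Hom_\Gamma(-,\Gamma)$ together with the left--right symmetry of the Gorenstein projective dimension'' --- does not work as stated. The functor $\Hom_\Gamma(-,\Gamma)$ is a duality only between $\Gproj(\Gamma)$ and $\Gproj(\Gamma^{op})$; an arbitrary right module, or its $d$-th syzygy, need not lie in its image, so hypothesis $(2)$ about left modules cannot be transported this way. And the ``left--right symmetry of the (global) Gorenstein projective dimension'' is itself a nontrivial theorem whose usual proof for artin algebras passes through exactly the equivalence you are trying to establish, so invoking it here is circular or at best replaces the theorem by one just as hard. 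A self-contained repair: having established $\id_\Gamma\Gamma\le d$, note that $G:=\Omega^d D(\Gamma_\Gamma)$ is Gorenstein projective by $(2)$ and has finite injective dimension (each term of $0\to G\to P_{d-1}\to\cdots\to P_0\to D(\Gamma_\Gamma)\to0$ other than $G$ does); writing $0\to G\to Q\to G'\to0$ with $Q$ projective and $G'$ Gorenstein projective, the vanishing $\Ext^1_\Gamma(G',G)=0$ (a Gorenstein projective module is an arbitrarily deep syzygy, hence left-orthogonal to anything of finite injective dimension) splits the sequence, so $G$ is projective and $\id\Gamma_\Gamma=\pd_\Gamma D(\Gamma_\Gamma)\le d$.
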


By Theorem \ref{theorem characterize of gorenstein property}, we get the following simple observation for $1$-Gorenstein algebras. Indeed, almost all algebras in this paper are $1$-Gorenstein.

\begin{remark}
If $\Gamma$ is $1$-Gorenstein, then the Gorenstein projective modules are just torsionless modules.
\end{remark}
\begin{proof}
It is easy to see that the Gorenstein projective modules are torsionless since torsionless modules are submodules of projective modules. On the other hand, for any torsionless module $M$, we get an injective morphism $M\xrightarrow{f} P$, where $P$ is projective, which is complete to a short exact sequence
$$0\rightarrow M\xrightarrow{f}P\xrightarrow{g}N\rightarrow0,$$
it implies $M\in\Omega(\Gamma)$. By Theorem \ref{theorem characterize of gorenstein property}, we get that $M$ is Gorenstein projective since $\Gamma$ is $1$-Gorenstein.
\end{proof}

For an algebra $\Gamma$, the \emph{singularity category} of $\Gamma$ is defined to be the quotient category $D_{sg}^b(\Gamma):=D^b(\Gamma)/K^b(\proj \Gamma)$ \cite{Bu,Ha1,Or1}.
For any two algebras, if their singularity categories are equivalent, then we call them to be \emph{singularity equivalent}.

\begin{theorem}\cite{Bu,Ha1}
Let $\Gamma$ be a Gorenstein algebra. Then $\Gproj (\Gamma)$ is a Frobenius category with the projective modules as the projective-injective objects. The stable category $\underline{\Gproj}(\Gamma)$ is triangulated equivalent to the singularity category $D^b_{sg}(\Gamma)$ of $\Gamma$.
\end{theorem}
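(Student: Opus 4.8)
The plan is to realize the triangle equivalence through a single comparison functor and to verify three things separately—the Frobenius structure, density, and full faithfulness—with the last being the crux.

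\emph{Frobenius structure.} First I would equip $\Gproj(\Gamma)$ with the exact structure inherited from $\mod \Gamma$, whose conflations are the short exact sequences of $\Gamma$-modules all of whose terms are Gorenstein projective; this is a legitimate exact category because $\Gproj(\Gamma)$ is closed under extensions and under kernels of admissible epimorphisms. The projective modules are projective objects since $\Ext^1_\Gamma(P,-)=0$, and they are injective objects as well: by Theorem \ref{theorem characterize of gorenstein property} one has $\Gproj(\Gamma)={}^{\bot}\Gamma$, so $\Ext^1_\Gamma(G,P)=0$ for every Gorenstein projective $G$ and projective $P$, which makes $\Hom_\Gamma(-,P)$ exact on conflations. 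There are enough projectives because any epimorphism $P\to G$ from a projective has Gorenstein projective kernel $\Omega G$, giving $0\to\Omega G\to P\to G\to 0$; and enough injectives because the totally acyclic complex defining $G$ yields $0\to G\to P^0\to\Omega^{-1}G\to 0$ with $P^0$ projective and $\Omega^{-1}G$ again Gorenstein projective. As the projective and injective objects coincide, $\Gproj(\Gamma)$ is Frobenius, so $\underline{\Gproj}(\Gamma)$ is triangulated with suspension $\Sigma=\Omega^{-1}$.

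\emph{The comparison functor.} Next I would form the composite
$$F:\Gproj(\Gamma)\hookrightarrow\mod \Gamma\hookrightarrow D^b(\mod \Gamma)\xrightarrow{\;Q\;}D^b_{sg}(\Gamma),$$
with modules viewed as complexes in degree $0$ and $Q$ the quotient functor. Since $Q$ kills $K^b(\proj \Gamma)$, $F$ sends projectives to $0$ and hence factors as $\bar F:\underline{\Gproj}(\Gamma)\to D^b_{sg}(\Gamma)$. It is a triangle functor: the conflation $0\to G\to P^0\to\Omega^{-1}G\to 0$ becomes a triangle in which $P^0$ vanishes in $D^b_{sg}(\Gamma)$, giving $\bar F(\Sigma G)\cong \bar F(G)[1]$, and every conflation maps to a distinguished triangle in $D^b_{sg}(\Gamma)$. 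For density I would again invoke Theorem \ref{theorem characterize of gorenstein property}: a standard truncation argument shows every object of $D^b_{sg}(\Gamma)$ is isomorphic to $M[n]$ for some module $M$; since $\Gamma$ is $d$-Gorenstein, $\Omega^d M$ is Gorenstein projective, and the conflations $0\to\Omega^{i+1}M\to P\to\Omega^i M\to 0$ (with projective middle terms dying in $D^b_{sg}(\Gamma)$) give $M\cong\Omega^d M[d]$, whence $M[n]\cong F(\Omega^d M)[d+n]$ lies in the image of the triangle functor $\bar F$.

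\emph{Full faithfulness, the main obstacle.} The genuine difficulty is to show that for $G,G'\in\Gproj(\Gamma)$ the induced map
$$\underline{\Hom}_\Gamma(G,G')\longrightarrow\Hom_{D^b_{sg}(\Gamma)}(G,G')$$
is bijective. I would compute the target by the calculus of fractions, representing a morphism as a roof $G\xleftarrow{s}Z\xrightarrow{g}G'$ with $\cone(s)$ perfect. The key input is again the orthogonality $\Gproj(\Gamma)={}^{\bot}\Gamma$: it forces the groups $\Hom_{D^b(\mod \Gamma)}(C,G'[j])$ for perfect $C$—which govern both the lifting of $g$ across $s$ and the nullity of a roof—to vanish in the relevant degrees, after shifting $G'$ by a syzygy if necessary. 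This makes every roof equivalent to an honest homomorphism $G\to G'$ (surjectivity) and makes such a homomorphism vanish in $D^b_{sg}(\Gamma)$ precisely when it factors through a projective, i.e.\ is zero in $\underline{\Hom}_\Gamma(G,G')$ (injectivity). I expect this orthogonality bookkeeping—keeping all obstruction groups in degrees where the Gorenstein projectivity of $G$ and $G'$ annihilates them—to be the technical heart; by contrast the Frobenius structure and density are essentially formal.
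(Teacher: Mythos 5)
The paper does not prove this theorem at all; it is quoted from Buchweitz and Happel, so there is no internal argument to compare yours against. What you have written is a reconstruction of the standard proof from those sources, and its architecture is sound: the Frobenius structure on $\Gproj(\Gamma)$ (with $\Gproj(\Gamma)={}^{\bot}\Gamma$ giving injectivity of the projectives and the totally acyclic complex giving enough injectives), the comparison functor $\bar F$ annihilating $K^b(\proj \Gamma)$, and density via $M\cong \Omega^d M[d]$ in $D^b_{sg}(\Gamma)$ are exactly as in Buchweitz's Theorem 4.4.1 and Happel's account.

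Two remarks on the step you rightly identify as the crux. First, the obstruction groups you name have the wrong variance: to lift $g$ across $s$ one needs the connecting morphism $G\to C$ into the perfect cone to die, so the relevant groups are $\Hom_{D^b(\mod \Gamma)}(G,C[j])$ --- maps \emph{from} the Gorenstein projective \emph{into} the perfect complex, which is what ${}^{\bot}\Gamma$ controls. The groups $\Hom_{D^b(\mod \Gamma)}(C,G'[j])$ as written do not vanish in general (take $C=\Gamma$ and $j=0$, which gives $G'$ itself). Second, even with the correct variance the vanishing only holds when the perfect complex is concentrated in strictly positive degrees; for an arbitrary perfect cone one must split it by a brutal truncation $\sigma_{\geq 1}C\to C\to \sigma_{\leq 0}C$ and absorb the non-positive part by passing to cosyzygies of $G$ and $G'$, using that $\Omega^{-1}$ is invertible on $\underline{\Gproj}(\Gamma)$; one also still has to check that a module map killed in $D^b_{sg}(\Gamma)$ because it factors through a perfect complex in fact factors through a projective module. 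Your phrase ``after shifting $G'$ by a syzygy if necessary'' gestures at all of this, but it is precisely where the proof lives. As a plan the proposal is the correct one, and with the variance corrected it closes along the standard lines of the cited references.
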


For this reason, we do not distinguish the singularity category and the stable category of the Gorenstein projective modules for a Gorenstein algebra.

\section{Rooted quiver of type $A$}

Recall that the quiver $A_n$ is the following direct graph on $n\geq1$ vertices
$$\circ_1\rightarrow\circ_2 \rightarrow \cdots \rightarrow \circ_n.$$

The neighborhood of a vertex $x$ in a quiver $Q$ is the full subquiver of $Q$ on the subset of vertices consisted of $x$ and the vertices which are targets of arrows starting at $x$ or sources of arrows ending at $x$.
\setlength{\unitlength}{0.8mm}
\begin{center}
\begin{picture}(100,30)

\put(0,20){\circle{1.3}}

\put(2,18){\vector(1,-1){7}}
\put(10,10){\circle*{1.3}}

\put(40,20){\circle{1.3}}

\put(50,10){\circle*{1.3}}
\put(48,12){\vector(-1,1){7}}

\put(80,0){\circle{1.3}}
\put(80,20){\circle{1.3}}

\put(90,10){\circle*{1.3}}
\put(82,18){\vector(1,-1){7}}
\put(88,8){\vector(-1,-1){7}}
\put(80,2){\vector(0,1){16}}
\end{picture}
\end{center}

\setlength{\unitlength}{0.8mm}
\begin{center}
\begin{picture}(100,30)
\put(0,0){\circle{1.3}}
\put(0,20){\circle{1.3}}
\put(2,2){\vector(1,1){7}}
\put(2,18){\vector(1,-1){7}}
\put(10,10){\circle*{1.3}}

\put(40,0){\circle{1.3}}
\put(40,20){\circle{1.3}}

\put(50,10){\circle*{1.3}}
\put(48,12){\vector(-1,1){7}}
\put(48,8){\vector(-1,-1){7}}

\put(80,0){\circle{1.3}}
\put(80,20){\circle{1.3}}

\put(90,10){\circle*{1.3}}
\put(82,18){\vector(1,-1){7}}
\put(88,8){\vector(-1,-1){7}}

\end{picture}
\end{center}

\setlength{\unitlength}{0.8mm}
\begin{center}
\begin{picture}(100,30)
\put(0,0){\circle{1.3}}
\put(2,0){\vector(1,0){16}}
\put(20,0){\circle{1.3}}

\put(18,2){\vector(-1,1){7}}
\put(10,10){\circle*{1.3}}
\put(8,8){\vector(-1,-1){7}}

\put(0,20){\circle{1.3}}
\put(2,18){\vector(1,-1){7}}

\put(40,0){\circle{1.3}}
\put(42,2){\vector(1,1){7}}
\put(50,10){\circle*{1.3}}
\put(48,12){\vector(-1,1){7}}
\put(40,20){\circle{1.3}}
\put(40,18){\vector(0,-1){16}}
\put(52,8){\vector(1,-1){7}}
\put(60,0){\circle{1.3}}

\put(80,0){\circle{1.3}}
\put(82,2){\vector(1,1){7}}
\put(90,10){\circle*{1.3}}
\put(88,12){\vector(-1,1){7}}
\put(80,20){\circle{1.3}}
\put(80,18){\vector(0,-1){16}}
\put(92,8){\vector(1,-1){7}}
\put(100,0){\circle{1.3}}
\put(100,2){\vector(0,1){16}}
\put(100,20){\circle{1.3}}
\put(98,18){\vector(-1,-1){7}}

\end{picture}
\end{center}
Figure 1. The 9 possible neighborhood of a vertex $\bullet$ in a quiver which is mutation equivalent to $A_n$, $n\geq2$. The three at the first row are the possible neighborhood of a root in a rooted quiver of type $A$.

\begin{lemma}\cite{BHL2}
 Let $n\geq2$. A quiver is mutation equivalent to $A_n$ if and only if it has $n$ vertices, the
neighborhood of each vertex is one of the nine depicted in Figure 1, and there are no cycles in its underlying
graph apart from those induced by oriented cycles contained in neighborhoods of vertices.
\end{lemma}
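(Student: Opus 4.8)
The plan is to prove the two implications separately, in both cases by induction and by exploiting that Fomin--Zelevinsky mutation $\mu_k$ is a purely local operation: it only reverses the arrows incident to $k$, adds one arrow $i\to j$ for each path $i\to k\to j$, and cancels $2$-cycles, so it alters the quiver only on the arrows touching $k$ and on the arrows among the neighbors of $k$.

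For the forward implication (``mutation equivalent to $A_n$ $\Rightarrow$ the three conditions'') I would argue by induction on the length of a mutation sequence from $A_n$. The base case is immediate: the linear quiver $A_n$ has $n$ vertices, every neighborhood is of one of the first two ``leaf'' types or the degree-$2$ linear type, and its underlying graph is a path, hence acyclic. For the inductive step I assume $Q$ satisfies the conditions and examine $\mu_k(Q)$. Since the vertex count is preserved, it remains to verify the neighborhood condition and the cycle condition. Because $\mu_k$ changes only the arrows meeting $k$ and those among the neighbors of $k$, the neighborhoods that can change are those of $k$ and of the vertices adjacent to $k$; I would run through the admissible configurations of the second neighborhood of $k$ (a finite list, since in $Q$ every vertex has degree at most $4$ by Figure~1) and check case by case that each resulting neighborhood again appears in Figure~1. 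For the cycle condition I would use that in $Q$ the only oriented cycles are the $3$-cycles sitting inside neighborhoods, and check that mutation neither lengthens such a cycle nor produces a new non-triangular cycle; the $2$-cycle cancellation built into the mutation rule is exactly what prevents the creation of longer cycles.

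For the converse (``the three conditions $\Rightarrow$ mutation equivalent to $A_n$'') I would induct on $n$. The crux is a \emph{root-existence} lemma: any quiver $Q$ satisfying the conditions contains a vertex whose neighborhood is one of the three types in the first row of Figure~1, i.e.\ a leaf (source or sink of degree $1$) or a degree-$2$ corner of a peripheral oriented triangle. This is the quiver analogue of the statement that a finite tree has a leaf: the acyclicity condition forces the ``tree of triangles'' underlying $Q$ to have a peripheral piece. Given such a root $v$, I reduce: if $v$ is a leaf I delete it; if $v$ is a triangle corner I first mutate at $v$, which turns the peripheral triangle into a linear path and makes one of its neighbors a leaf, and then delete that leaf. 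In either case one checks that the resulting quiver on $n-1$ vertices still satisfies the three conditions, so by the inductive hypothesis it is mutation equivalent to $A_{n-1}$; re-attaching $v$ (reversing the deletion, and undoing the mutation if one was performed) exhibits $Q$ as mutation equivalent to $A_n$.

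The main obstacle is the root-existence lemma together with the verification that reduction preserves the conditions: one must show that deleting a leaf or mutating at a peripheral triangle corner cannot destroy the neighborhood condition at the remaining vertices nor create a forbidden cycle. An alternative, more conceptual route that sidesteps the ad hoc case analysis is to set up the bijection between quivers satisfying the three conditions and triangulations of a convex $(n+3)$-gon, under which quiver mutation corresponds to diagonal flips; connectivity of the flip graph of triangulations (the $1$-skeleton of the associahedron) then yields both implications at once. I would use the inductive argument as the primary proof and invoke the triangulation model only as a cross-check.
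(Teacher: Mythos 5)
The paper does not prove this lemma; it is imported verbatim from [BHL2] (the characterization is originally due to Buan and Vatne [BV]), so your attempt can only be judged on its own terms. Your overall strategy is the standard one: the forward direction by showing the three conditions are invariant under a single mutation (a finite, local case check that you defer but that is routine), and the converse by locating a ``peripheral'' vertex and inducting on $n$. Your root-existence lemma is correct: the cycle condition forces the block structure of the underlying graph to be a tree of edges and triangles, a leaf block of which supplies either a degree-$1$ vertex or a degree-$2$ corner of an oriented triangle.

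There is, however, a genuine gap in the converse at the step ``re-attaching $v$ (reversing the deletion, and undoing the mutation if one was performed) exhibits $Q$ as mutation equivalent to $A_n$.'' Deletion of a vertex is not an operation you can simply reverse after running a mutation sequence: if $Q\setminus\{v\}$ is carried to $A_{n-1}$ by mutations $\mu_{k_1},\dots,\mu_{k_m}$, you must apply that same sequence to $Q$ itself, and while the restriction to the vertices other than $v$ does end up being $A_{n-1}$ (because for $k\neq v$ the full subquiver of $\mu_k(Q)$ on any vertex set containing $k$ but not $v$ equals the mutation of the corresponding full subquiver of $Q$ --- a fact you should state and use explicitly), the arrows between $v$ and the rest are \emph{not} controlled: each mutation at a neighbor of $v$ can create new arrows incident to $v$, so at the end $v$ need not be a leaf hanging off an endpoint. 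To close the argument you must invoke the forward-direction invariance statement (which holds for any quiver satisfying the three conditions, independently of knowing it is mutation equivalent to $A_n$) to conclude that the resulting quiver still satisfies the conditions; combined with the fact that its restriction to $n-1$ vertices is a linear $A_{n-1}$, this forces $v$ to be attached either by a single arrow at an end of the path or as the apex of a triangle over one edge of the path, and each of these finitely many shapes is easily mutated to $A_n$. Without this repair the induction does not go through. The alternative route you mention --- the bijection with triangulations of the $(n+3)$-gon and connectivity of the flip graph --- avoids the issue entirely and is in fact closer to how the result is established in the literature.
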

\begin{remark}\cite{BMR2,CCS1,CCS2}\label{remark cluster-tilted algebra of A}
Given a quiver $Q$ mutation equivalent to $A_n$, the relations defining the corresponding cluster-tilted algebra $\Lambda_Q$ (which has $Q$ as its quiver) are obtained as follows: any triangle
\[\xymatrix{&\circ\ar[dr]^\beta&\\
\circ\ar[ur]^\alpha &&\circ\ar[ll]^{\gamma}}\]
in $Q$ gives rise to three zero relations $\beta\alpha,\gamma\beta,\alpha\gamma$, and there are no other relations.
\end{remark}
\begin{definition}\cite{BHL2}
A rooted quiver of type $A$ is a pair $(Q,v)$ where $Q$ is a quiver which is mutation equivalent to $A_n$ and $v$ is a vertex of $Q$(the root) whose neighborhood is one of the three in
the first row of Figure 1 if $n\geq2$.

Let $Q_0$ be a quiver, called a skeleton, and let $c_1,c_2,\dots,c_k$ be $k\geq0$ distinct vertices of $Q_0$. The gluing of $k$ rooted quiver of type $A$, say
$$(Q_1,v_1),(Q_2,v_2),\dots,(Q_k,v_k),$$ to $Q_0$ at the vertices $c_1,\dots,c_k$ is defined as the quiver $Q$ obtained from the distinct union $Q_0\bigsqcup Q_1\bigsqcup\cdots \bigsqcup Q_k$ by identifying each vertex $c_i$ with the corresponding root $v_i$, for $1\leq i\leq k$.
\end{definition}
\begin{remark}\cite{BHL2}
Given relations $I_0$ on the skeleton $Q_0$, they induce relations $I$ on the gluing quiver $Q$, namely by taking the union of all the relations on $Q_0,Q_1,\dots,Q_k$, where the relations $I_0,I_1,\dots,I_k$ on the rooted quivers of type $A$ are those stated in Remark \ref{remark cluster-tilted algebra of A}. In particular, we also denote the quotient algebra by $\Lambda_Q=KQ/I$.
\end{remark}

For a rooted quiver $(Q,v)$ of type $A$, we call a mutation at a vertex other than the root $v$ a mutation outside the root.

The standard form of a rooted quiver $(Q,v)$ of type $A$ is a rooted quiver of type $A$ as in the following diagram, consisting of $s(Q)$ lines and $t(Q)$ triangles with the vertex $v$ as the root.
\[\xymatrix {&&&  &\circ \ar[dl]  &&&  \circ \ar[dl]  &\\
\bullet_v\ar[r] &\circ \ar[r]& \cdots \ar[r] &\circ \ar[r] &\circ\ar[u] & \cdots& \circ \ar[r] & \circ \ar[u]    }\]

\begin{proposition}\cite{BHL1}\label{proposition rooted quiver of type A}
Any two rooted quivers of type $A$ with the same numbers of lines and triangles can be connected by a sequence of good mutations outside the root.
\end{proposition}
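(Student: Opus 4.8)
The plan is to fix a normal form and route every rooted quiver to it. Concretely, I would show that each rooted quiver $(Q,v)$ of type $A$ is connected, by good mutations performed away from $v$, to the standard form displayed above, which is determined solely by the two numbers $s(Q)$ (lines) and $t(Q)$ (triangles). Granting this, if $(Q_1,v_1)$ and $(Q_2,v_2)$ have the same number of lines and the same number of triangles, then they share the same standard form, and concatenating a sequence of good mutations from $(Q_1,v_1)$ to the standard form with the reverse of such a sequence from $(Q_2,v_2)$ connects $(Q_1,v_1)$ to $(Q_2,v_2)$.

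For the concatenation to make sense I first observe that ``connected by good mutations outside the root'' is an equivalence relation. Reflexivity and transitivity are immediate, and symmetry follows because a Fomin--Zelevinsky mutation is an involution, $\mu_k(\mu_k(Q))=Q$, together with the fact that the goodness criterion of Proposition \ref{proposition definition good mutation } is symmetric in $Q$ and $\mu_k(Q)$: the mutation at $k$ is good precisely when $\mu_k^-(\Lambda_Q)$ and $\mu_k^+(\Lambda_{\mu_k(Q)})$ are both defined, or $\mu_k^+(\Lambda_Q)$ and $\mu_k^-(\Lambda_{\mu_k(Q)})$ are both defined, and interchanging $Q$ with $\mu_k(Q)$ merely swaps these two alternatives. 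Hence the reverse of a good mutation at $k$ is again a good mutation at $k$, and the reverse of an admissible sequence is admissible.

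The reduction to standard form I would carry out by induction on the number of vertices $n$, the inductive step being driven by a short list of elementary moves, each realized by a single good mutation outside $v$: normalizing the neighborhood of the root so that $v$ becomes the source endpoint of the main line (in particular detaching $v$ from a triangle when its neighborhood is the third type in the first row of Figure 1), straightening a line or triangle that branches off an interior vertex so that it comes to lie along the main line, and sliding a triangle one step toward the end of the quiver farthest from $v$. Each move leaves $(Q,v)$ a rooted quiver of type $A$ and preserves the pair $(s(Q),t(Q))$: the number $n$ of vertices is fixed by any mutation, the number of triangles $t(Q)$ is fixed because a good mutation is a derived equivalence and, for type $A$, derived-equivalent cluster-tilted algebras have equally many $3$-cycles, and hence the number of lines is fixed as well. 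Iterating the moves pushes all triangles to the far end and collapses all branches onto one line, terminating at the standard form.

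The main obstacle is verifying that each elementary move is genuinely a good mutation, that is, checking the tilting-complex hypotheses of Proposition \ref{proposition definition good mutation } for the vertex $k$ being mutated. This is a finite local computation: the neighborhood of $k$ is one of the nine types of Figure 1 and the relations of $\Lambda_Q$ are the triangle relations of Remark \ref{remark cluster-tilted algebra of A}, so one can write down $T_k^{-}(\Lambda_Q)$ or $T_k^{+}(\Lambda_Q)$ explicitly, confirm that it is a tilting complex, and identify its endomorphism algebra with the mutated cluster-tilted algebra. The remaining care is organizational: one must exhibit, for any quiver not yet in standard form, an applicable move away from the root that strictly decreases a suitable complexity measure (for instance the total distance of the triangles from the far end plus the number of off-line branches), guaranteeing that the induction terminates precisely at the displayed standard form.
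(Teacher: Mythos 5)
The paper itself does not prove this proposition: it is quoted from [BHL1], where the argument indeed runs along the lines you sketch — identify a short list of local moves, certify each as a good mutation, and use them to drive an arbitrary rooted quiver of type $A$ to the standard form determined by $(s(Q),t(Q))$. Your framing of the reduction is sound: the observation that ``connected by good mutations outside the root'' is an equivalence relation (with the correct remark that goodness is symmetric under $Q\leftrightarrow\mu_k(Q)$ because conditions (a) and (b) of Proposition \ref{proposition definition good mutation } simply swap), the invariance of the vertex number and of $t(Q)$ under good mutations, and the routing of both quivers through a common normal form all match the strategy of the cited source.

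The gap is that the entire mathematical content of the proposition sits exactly where you defer it: the verification that each of your elementary moves is realized by a \emph{good} mutation. Not every quiver mutation of a cluster-tilted algebra of type $A$ is good — determining which ones are is the main technical point of [BHL2] (their Table 3 and Lemma 3.3, which this paper invokes elsewhere, e.g.\ in the proof of Corollary \ref{corollary CM module of cluster-tilted algebra of type A}) — so ``a finite local computation'' cannot stand in for actually writing down $T_k^{-}(\Lambda_Q)$ or $T_k^{+}(\Lambda_Q)$ for each relevant neighborhood type from Figure 1, checking the tilting conditions, and identifying the endomorphism algebra with $\Lambda_{\mu_k(Q)}$. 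Likewise, the list of moves (normalizing the root's neighborhood, straightening branches, sliding a triangle toward the far end) and the complexity measure guaranteeing termination are named but not exhibited; one must in particular check that the chosen moves never require mutating at the root itself and that every intermediate quiver is still a rooted quiver of type $A$ with the same root. As written, the proposal is a correct plan in the spirit of the cited proof rather than a proof.
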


\begin{proposition}\label{proposition Gorenstein algebra under good mutation}
Let $Q$ be the gluing of $k$ rooted quivers of type $A$, say
$(Q_1,v_1)$, $(Q_2,v_2),\dots,(Q_k,v_k)$, to $Q_0$ at the distinct vertices $c_1,\dots,c_k$. If $\mu_j$ is a good mutation for any $j \in Q_i$ other than $v_i$, then $\Lambda_{\mu_j(Q)}$ is derived equivalent to $\Lambda_{Q}$. In particular, $\Lambda_{\mu_j(Q)}$ is a Gorenstein algebra if and only if so is $\Lambda_{Q}$.
\end{proposition}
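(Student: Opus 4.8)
The plan is to reduce the quiver mutation to the single component $Q_i$, use the tilting complex supplied by goodness to manufacture a derived equivalence, and then invoke the derived invariance of the Gorenstein property. First I would record that the mutation is local. Since $j$ is a vertex of $Q_i$ other than the root $v_i$, the gluing identifies no neighbour of $j$ with a skeleton vertex, so the whole neighbourhood of $j$ in $Q$ already lies in $Q_i$. Because Fomin-Zelevinsky mutation only reverses the arrows at $j$ and modifies arrows between neighbours of $j$, the quiver $\mu_j(Q)$ differs from $Q$ only inside $Q_i$; explicitly it is the gluing of $(Q_1,v_1),\dots,(\mu_j(Q_i),v_i),\dots,(Q_k,v_k)$ to $Q_0$, with $v_i$ still a legitimate root since its neighbourhood is unchanged. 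In particular $\Lambda_{\mu_j(Q)}$ is again the cluster-tilted algebra attached to a gluing of rooted quivers of type $A$ to $Q_0$, so the statement is well posed.

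Next I would build the derived equivalence. By hypothesis $\mu_j$ is good, so one of the two-term complexes $T_j^-(\Lambda_Q)$, $T_j^+(\Lambda_Q)$ is a tilting complex over $\Lambda_Q$, whence by Rickard's theorem its endomorphism algebra $\mu_j^-(\Lambda_Q)$ (resp.\ $\mu_j^+(\Lambda_Q)$) is derived equivalent to $\Lambda_Q$. It remains to identify this endomorphism algebra with $\Lambda_{\mu_j(Q)}$; in the genuinely Dynkin situation this identification is exactly Proposition~\ref{proposition definition good mutation }. My strategy would be to transport that computation to the glued algebra by observing that the complex $C_j=(P_j\xrightarrow{f}\bigoplus_{l\to j}P_l)$, its cohomology, and the irreducible maps relating $C_j$ to the remaining summands $P_m$ are all dictated by the arrows and triangle relations meeting $j$, which are precisely those of $Q_i$; the summands $P_m$ with $m\neq j$ and the morphisms among them are unchanged in passing from $Q$ to $\mu_j(Q)$.

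Granting the derived equivalence $D^b(\Lambda_{\mu_j(Q)})\simeq D^b(\Lambda_Q)$, the final assertion is formal: the Gorenstein property of an artin algebra is a derived invariant, since $\Lambda$ is Gorenstein exactly when $K^b(\proj\Lambda)=K^b(\inj\Lambda)$ inside $D^b(\Lambda)$ (the inclusion $\subseteq$ encodes $\id_\Lambda\Lambda<\infty$ and $\supseteq$ encodes $\pd_\Lambda D\Lambda<\infty$), and this equality of thick subcategories is preserved by a standard derived equivalence together with its companion on the opposite algebras. I expect the main obstacle to be the identification $\mu_j^\pm(\Lambda_Q)\cong\Lambda_{\mu_j(Q)}$: the glued quiver $Q$ need not be mutation equivalent to a Dynkin quiver, so Proposition~\ref{proposition definition good mutation } cannot be cited directly, and the subtle point is that the projective $P_j$ over $\Lambda_Q$ carries extra paths running through the root $v_i$ into the skeleton $Q_0$. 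The crux is therefore to verify that these extra paths contribute nothing to the $\Hom$- and $\Ext$-groups governing both the tilting-complex condition for $T_j^\pm(\Lambda_Q)$ and the computation of its endomorphism algebra, so that the verification genuinely localizes to $Q_i$ and the Dynkin-case result of \cite{BHL2} applies.
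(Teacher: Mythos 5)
Your proposal follows essentially the same route as the paper: the derived equivalence is read off from the goodness of the mutation (the paper simply cites Proposition~\ref{proposition definition good mutation } together with the definition of good mutation), and the Gorenstein invariance is obtained exactly as you describe, via Rickard's results that a derived equivalence restricts to equivalences on $K^b(\proj)$ and on $K^b(\inj)$, combined with Happel's characterization of Gorenstein algebras by $K^b(\proj A)=K^b(\inj A)$. The identification $\mu_j^{\pm}(\Lambda_Q)\cong\Lambda_{\mu_j(Q)}$ that you flag as the crux is left implicit in the paper as well --- its proof treats the hypothesis that $\mu_j$ is good as already supplying this --- so your extra caution highlights a step the paper does not elaborate rather than a genuine difference of method.
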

\begin{proof}
By Proposition \ref{proposition definition good mutation } and the definition of good mutation, we know that $\Lambda_{\mu_j(Q)}$ is derived equivalent to $\Lambda_{Q}$.

By \cite[Theorem 6.4 ]{Ric}, we get that $K^b(\proj\Lambda_{\mu_j(Q)})$ is equivalent to $K^b(\proj\Lambda_{Q})$, and by \cite[Proposition 9.1]{Ric}, we get that
$K^b(\inj\Lambda_{\mu_j(Q)})$ is equivalent to $K^b(\inj\Lambda_{Q})$. \cite[Lemma 1.5]{Ha1} shows that for any algebra $A$, $A$ is a Gorenstein algebra if and only if
$K^b(\proj A)=K^b(\inj A)$. So $\Lambda_{\mu_j(Q)}$ is a Gorenstein algebra if and only if so is $\Lambda_{Q}$.
\end{proof}

Let $B$ and $C$ be Gorenstein algebras, $M$ a $(B,C)$-bimodule and $\Lambda=\left(\begin{array}{cc}B &M\\0&¡¡C\end{array}\right)$ be the upper triangular matrix algebra. Recall that $\Lambda$ is Gorenstein if and only if the bimodule $M$ has finite projective dimension both as left $B$-module and as right $C$-module \cite[Theorem 3.3]{Chen1}.

Let $B=KQ'/I'$ and $C=KQ''/I''$. Let $Q$ be the quiver glued by $Q',Q''$ as the following picture shows:
\setlength{\unitlength}{0.5mm}
\begin{center}
\begin{picture}(120,60)(0,-20)
\qbezier(-5,0)(0,30)(40,10)
\qbezier(-5,0)(0,-30)(40,-10)
\qbezier(40,10)(55,0)(40,-10)
\put(40,0){\circle*{2}}

\qbezier(70,10)(55,0)(70,-10)
\qbezier(70,10)(110,30)(115,0)
\qbezier(70,-10)(110,-30)(115,0)
\put(70,0){\circle*{2}}
\put(40,0){\vector(1,0){29}}

\put(33,-5){$w$}
\put(72,-5){$v$}
\put(10,-5){$Q''$}
\put(90,-5){$Q'$}
\end{picture}
\end{center}

Let $\Lambda=KQ/I$, where $I$ is the ideal generated by $I'$ and $I''$.
Then $\Lambda$ is the upper matrix algebra $\left(\begin{array}{cc} B & M\\ 0&C \end{array}\right)$, where $M$ is projective as $B$($C$)-module respectively.
If $B$ and $C$ are Gorenstein algebras, then $\Lambda$ is a Gorenstein algebra.
\begin{lemma}\label{lemma recollement splits}
Keep the notations as above. Then $$\underline{\Gproj}(\Lambda)\simeq \underline{\Gproj}(B)\coprod\underline{\Gproj}(C).$$
\end{lemma}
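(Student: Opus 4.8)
The plan is to identify $\Gproj(\Lambda)$ concretely and exhibit an explicit functor to $\Gproj(B)\times\Gproj(C)$ that becomes an equivalence after passing to the stable categories. Recall that a left $\Lambda$-module is the same as a triple $(X,Y,f)$ with $X\in\mod B$, $Y\in\mod C$ and $f\colon M\otimes_C Y\to X$ a $B$-homomorphism, a morphism $(X,Y,f)\to(X',Y',f')$ being a pair $(a,b)$ with $af=f'(\mathrm{id}_M\otimes b)$; the indecomposable projectives are $\begin{pmatrix}P\\0\end{pmatrix}$ with $P\in\proj B$ and $\begin{pmatrix}M\otimes_C Q\\ Q\end{pmatrix}_{\mathrm{id}}$ with $Q\in\proj C$. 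Since $M$ is projective both as a left $B$-module and as a right $C$-module, one has the usual description of Gorenstein-projective modules over a triangular matrix algebra (see, e.g., \cite{Chen1}): $(X,Y,f)\in\Gproj(\Lambda)$ if and only if $f$ is injective, $\coker f\in\Gproj(B)$ and $Y\in\Gproj(C)$. First I would record this characterization.

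The key computation is that $M\otimes_C Y$ is projective over $B$ for \emph{every} $Y\in\mod C$. Indeed, because the gluing adds the single arrow $w\to v$ and no relation involves it, $M\cong Be_v\otimes_K e_wC$ as a $(B,C)$-bimodule, whence $M\otimes_C Y\cong Be_v\otimes_K(e_wY)\cong(Be_v)^{\dim_K e_wY}$ is a direct sum of copies of the indecomposable projective $B$-module $Be_v$. Consequently $\begin{pmatrix}M\otimes_C Y\\0\end{pmatrix}$ is a \emph{projective} $\Lambda$-module for every $C$-module $Y$; this is the one fact about the special shape of $M$ that drives everything.

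Next I would define $\Phi\colon\Gproj(\Lambda)\to\Gproj(B)\times\Gproj(C)$ by $\Phi(X,Y,f)=(\coker f,\,Y)$ and $\Psi$ in the other direction by $\Psi(\bar X,\bar Y)=\big(\bar X\oplus(M\otimes_C\bar Y),\,\bar Y,\,\iota\big)$, with $\iota$ the inclusion of the second summand; by the characterization and the key computation both are well defined, and $\Phi\Psi\cong\mathrm{id}$. Given $(X,Y,f)\in\Gproj(\Lambda)$, consider the short exact sequence of $B$-modules $0\to M\otimes_C Y\xrightarrow{f}X\to\coker f\to0$; since $M\otimes_C Y$ is projective and $\coker f\in\Gproj(B)={}^{\bot}B$, we get $\Ext^1_B(\coker f,M\otimes_C Y)=0$, so the sequence splits and $(X,Y,f)\cong\Psi\Phi(X,Y,f)$. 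Thus every Gorenstein-projective $\Lambda$-module is isomorphic to one of the form $\Psi(\bar X,\bar Y)$, and on such objects $\Phi$ is easily seen to be full. Moreover $\Phi$ is exact for the Frobenius structures (the snake lemma applies to the componentwise cokernels, all $f_i$ being monic) and sends projective $\Lambda$-modules to projective objects of the product.

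The delicate point — and the reason the statement is for the \emph{stable} categories — is that $\Phi$ is \emph{not} faithful on $\Gproj(\Lambda)$ itself: a morphism killed by $\Phi$ is exactly an ``off-diagonal'' map $\bar X\to M\otimes_C Y'$. The main step is to observe that every such morphism factors through the projective $\Lambda$-module $\begin{pmatrix}M\otimes_C Y'\\0\end{pmatrix}$ and hence vanishes in $\underline{\Gproj}(\Lambda)$. Since $\Phi$ preserves projective objects it descends to an exact functor $\underline{\Phi}$ of stable (triangulated) categories which is now full, faithful and dense, i.e. a triangle equivalence
\[
\underline{\Gproj}(\Lambda)\;\simeq\;\underline{\Gproj}(B)\times\underline{\Gproj}(C)\;=\;\underline{\Gproj}(B)\coprod\underline{\Gproj}(C).
\]
I expect this faithfulness step to be the main obstacle: it is precisely here that the two-sided projectivity of $M$ forces the gluing (connecting) morphisms of the recollement to die stably, so that the recollement splits into a product. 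As a routine alternative, since all the algebras here are $1$-Gorenstein, one may replace the cited characterization by the description of $\Gproj$ as torsionless modules from the Remark above and argue directly with submodules of projectives.
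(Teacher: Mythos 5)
Your proof is correct, and it reaches the conclusion by a genuinely different route than the paper. The paper invokes Zhang's recollement of stable categories
\[
\underline{\Gproj}(B)\rightarrow\underline{\Gproj}(\Lambda)\rightarrow\underline{\Gproj}(C)
\]
and shows that the gluing triangle of each object splits because its connecting morphism factors through the projective module $\left(\begin{smallmatrix}M\otimes Z\\0\end{smallmatrix}\right)$; you instead bypass the recollement entirely and build the mutually inverse functors $\Phi=(\coker(-),\,j^*)$ and $\Psi$ by hand, splitting already at the level of modules via $\Ext^1_B(\coker f,\,M\otimes_C Y)=0$. Both arguments pivot on exactly the same computation — $M\cong Be_v\otimes_K e_wC$, so $M\otimes_C Y\cong (Be_v)^{\dim_K e_wY}$ is projective for \emph{every} $Y$ — and on Zhang's description of $\Gproj(\Lambda)$ as triples with monic structure map and Gorenstein projective cokernel. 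What the paper's route buys is that the triangulated structure comes for free from the recollement formalism; what your route buys is a more elementary and self-contained argument (no recollement needed) and a sharper statement, namely that the splitting already happens in $\Gproj(\Lambda)$ up to the stably trivial off-diagonal maps. One small point to tighten: faithfulness of $\underline{\Phi}$ requires handling morphisms whose image under $\Phi$ factors through a projective of $\Gproj(B)\times\Gproj(C)$, not only those with $\Phi(a,b)=0$ on the nose; this follows from fullness of $\Phi$ together with the fact that $\Psi$ sends projectives to projectives, by subtracting a lift of the factorization, but it should be said. With that standard supplement your argument is complete.
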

\begin{proof}
Recall that any module in $\mod\Lambda$ can be identified with a triple $\left( \begin{array}{cc} X\\Y \end{array} \right)_\phi$, where $X\in \mod B$, $Y\in\mod C$, and $\phi:M\otimes_CY\rightarrow X$ is an $B$-map.

By Theorem 2.2 in \cite{Zh1}, we get that $\left( \begin{array}{cc} X\\Y \end{array} \right)_\phi$ is a Gorenstein projective $\Lambda$-module if and only if
$\phi$ is monic, $X$, $M\otimes Y$ and $\Coker\phi$ are Gorenstein projective $B$-modules, and $Y$ is a Gorenstein projective $C$-module.

By Theorem 3.3 in \cite{Zh1}, we get a recollement
\[\xymatrix{
\underline{\Gproj}(B) \ar[r]^{i_*}& \underline{\Gproj}(\Lambda)
\ar@<2ex>[l]^{i^!}\ar@<-3ex>[l]_{i^*}\ar[r]^{j^*}& \underline{\Gproj}(C)
\ar@<2ex>[l]^{j_*}\ar@<-3ex>[l]_{j_!}  }\]
where $i^*$ is given by $\left(\begin{array}{cc} X  \\ Y   \end{array} \right)_{\phi }\mapsto \Coker(\phi )$;
$i_*$ is given by $X \mapsto \left(\begin{array}{cc} X  \\ 0  \end{array} \right)$; $i^!$ is given by $\left(\begin{array}{cc}X  \\ Y   \end{array} \right)_{\phi }\mapsto X $;
$j_!$ is given by $Y  \mapsto\left(\begin{array}{cc} M\otimes Y\\ Y   \end{array} \right)_{\Id}$;
$j^*$ is given by $\left(\begin{array}{cc} X  \\ Y   \end{array} \right)_{\phi }\mapsto Y $;
$j_*$ is given by $ Y \mapsto \left(\begin{array}{cc} P_Y \\ Y   \end{array} \right)_{\sigma}$, where $P_Y$ is a projective object in $\mod C$ such that there is an exact sequence $$0\rightarrow M\otimes Y\xrightarrow{\sigma} P_Y\rightarrow\Coker\sigma\rightarrow0$$ with $\Coker\sigma\in \Gproj(B)$.

So for any Gorenstein projective $\Lambda$-module $\left( \begin{array}{cc} X\\Y \end{array} \right)_\phi$, there exists a triangle in $\underline{\Gproj}(\Lambda)$:
\begin{equation}\label{equation triangle}
\left( \begin{array}{cc} M\otimes Y\\Y \end{array} \right)_{\Id} \rightarrow  \left( \begin{array}{cc} X\\Y \end{array} \right)_\phi \rightarrow \left( \begin{array}{cc} \Coker \phi\\0 \end{array} \right)_0\xrightarrow{f} \left( \begin{array}{cc} M\otimes Z\\Z \end{array} \right)_{\Id},
\end{equation}
where $Z$ is a Gorenstein projective $B$-module such that there is an exact sequence
$0\rightarrow Y \rightarrow Q_Y \rightarrow Z\rightarrow0$ with $Q_Y$ is a projective $C$-module, $Z\in \Gproj(B)$.
For any $C$-module $Z$, $M\otimes Z\cong\oplus_{\dim e_wZ}P_v$ is projective, where $e_w$ is the idempotent corresponding to the vertex $w$ and  $P_v$ is the indecomposable projective corresponding to the vertex $v$. Then $f$ factors through a projective module $\left( \begin{array}{cc} M\otimes Z\\0 \end{array} \right)$, so $f=0$ in $\underline{\Gproj}(\Lambda)$.
Thus the triangle (\ref{equation triangle}) splits. Similarly, we can get that $\Hom_{\underline{\Gproj}(\Lambda)}( \Im i_*,\Im j_!)=0$. We also get that
$\Hom_{\underline{\Gproj}(\Lambda)}( \Im j_!,\Im i_*)=0$ since the adjoint property and $j^*i_*\simeq0$. By all of above, we get that
$\underline{\Gproj}(\Lambda)\simeq \Im i_*\coprod\Im j_!$, and then $$\underline{\Gproj}(\Lambda)\simeq \underline{\Gproj}(B)\coprod\underline{\Gproj}(C),$$
since $i_*,j_!$ are fully embeddings.
\end{proof}

\begin{corollary}\cite{Ka}\label{corollary CM module of cluster-tilted algebra of type A}
Let $\Lambda=KQ/I$ be a cluster-tilted algebra of type $A$.  Then
$$\underline{\Gproj}(\Lambda)\simeq \coprod_{t(Q)} \underline{\mod} S_3,$$

\end{corollary}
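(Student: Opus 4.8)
The plan is to reduce to the standard form of the quiver and then split the stable category block by block via Lemma \ref{lemma recollement splits}. Since cluster-tilted algebras are Gorenstein and good mutations are derived equivalences that preserve the Gorenstein property (Proposition \ref{proposition Gorenstein algebra under good mutation}), every good mutation carries the singularity category $D^b_{sg}(\Lambda)\simeq\underline{\Gproj}(\Lambda)$ to that of the mutated algebra; moreover the number of $3$-cycles is a derived invariant in type $A$ (as recalled in the introduction), so $t(Q)$ is unchanged along the way. Hence I may present $(Q,v)$ as a rooted quiver of type $A$ and, invoking Proposition \ref{proposition rooted quiver of type A}, replace $Q$ by its standard form without altering either $\underline{\Gproj}(\Lambda)$ or $t(Q)$. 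Thus I reduce to computing the stable category for the standard form: a spine carrying $t(Q)$ triangles and $s(Q)$ linear pieces.

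First I would isolate the two elementary building blocks. A single triangle, equipped with its three zero relations from Remark \ref{remark cluster-tilted algebra of A}, is exactly the cyclic Nakayama algebra $S_3$, namely the $3$-cycle modulo all paths of length $2$; since $S_3$ is self-injective we have $\Gproj(S_3)=\mod S_3$, hence $\underline{\Gproj}(S_3)=\underline{\mod}\,S_3$. A linear piece is the path algebra of a type $A_m$ quiver, which is hereditary, so its bounded derived category equals $K^b(\proj)$ and its singularity category vanishes; thus such a block contributes $\underline{\Gproj}=0$.

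Next I would exhibit the standard form as an iterated bridge-gluing, in the sense of the construction preceding Lemma \ref{lemma recollement splits}, of its triangle blocks and linear blocks. A spine arrow joining two consecutive blocks lies in none of the triangle relations, so cutting it presents $\Lambda$ as an upper triangular algebra $\left(\begin{array}{cc} B & M \\ 0 & C\end{array}\right)$ with $M$ the bimodule of that arrow (projective on both sides) and relation ideal $I=\langle I_B,I_C\rangle$. As $B$ and $C$ are again gluings of triangle and linear blocks they are Gorenstein, so Lemma \ref{lemma recollement splits} yields $\underline{\Gproj}(\Lambda)\simeq\underline{\Gproj}(B)\coprod\underline{\Gproj}(C)$. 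Iterating the cut over all bridge arrows decomposes $\underline{\Gproj}(\Lambda)$ into the coproduct of the stable categories of the individual blocks; by the previous paragraph the linear blocks contribute nothing and each of the $t(Q)$ triangle blocks contributes one copy of $\underline{\mod}\,S_3$, giving $\underline{\Gproj}(\Lambda)\simeq\coprod_{t(Q)}\underline{\mod}\,S_3$.

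The main obstacle I anticipate is structural rather than homological: one must ensure that in the standard form the triangles are genuinely separated, i.e. that every triangle is attached to the remainder along bona fide bridge arrows lying in no relation, so that the hypotheses of Lemma \ref{lemma recollement splits}—especially the two-sided projectivity of $M$ and the equality $I=\langle I_B,I_C\rangle$—hold at each cut. This is precisely where the reduction to standard form via good mutations is indispensable, since in an arbitrary type $A$ quiver two triangles may share a vertex, and then no separating bridge arrow exists to feed into the lemma.
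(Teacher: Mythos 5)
Your reduction to the standard form and your identification of the two building blocks (a $3$-cycle with its zero relations is the self-injective Nakayama algebra $S_3$, a linear piece is hereditary and contributes nothing to the singularity category) are correct and agree with the paper. The gap is in the decomposition step: you assert that in the standard form every triangle is attached to the rest of the quiver by a bona fide bridge arrow lying in no relation, and that this is what the reduction to standard form buys you. That is false. In the standard form the $t(Q)$ triangles are chained consecutively: triangle $i$ lives on the vertices $\circ_{i-1},\circ_i$ and its top vertex, triangle $i+1$ on $\circ_i,\circ_{i+1}$ and its top vertex, so consecutive triangles share the spine vertex $\circ_i$, and the spine arrow $\circ_i\rightarrow\circ_{i+1}$ is one of the three arrows of an oriented $3$-cycle, hence occurs in two of the zero relations. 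There is no cut presenting $\Lambda$ as an upper triangular matrix algebra with $I=\langle I_B,I_C\rangle$ and $M$ projective on both sides, so Lemma \ref{lemma recollement splits} does not apply. Nor can you mutate your way out of this within the same quiver: a chain of $t\geq 2$ triangles with no linear part has only $2t+1$ vertices, whereas $t$ pairwise vertex-disjoint triangles joined by bridges would require at least $3t$, so no sequence of good mutations (which preserve the vertex count) produces a configuration to which your iterated cut applies.

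The paper supplies exactly the missing device: it first invokes \cite[Theorem 4.1]{Chen1} (one-point extension or coextension does not change the singularity category) to append an extra vertex next to the terminal triangle, and only then performs a good mutation at $\circ_t$; this converts the shared-vertex junction into a genuine bridge arrow, after which Lemma \ref{lemma recollement splits} splits off one copy of $\underline{\mod}S_3$, and one concludes by induction on $t$. If you want to keep your block-splitting outline, you must insert this ``add a vertex, then mutate'' step before each application of the lemma; without it the argument does not go through.
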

\begin{proof}
By Proposition \ref{proposition rooted quiver of type A}, we can assume that $Q$ is in the standard form.
By Theorem 4.1 in \cite{Chen1}, we get that
$\underline{\Gproj}(\Lambda)$ is triangulated equivalent to
$\underline{\Gproj}(B)$, where $B$ is the cluster-tilted algebra of type A as follows
\[\xymatrix {&\circ \ar[dl]& \circ\ar[dl]& && \circ \ar[dl] &\circ \ar[dl] \\
\circ_{0}\ar[r] &\circ_1 \ar[r] \ar[u]& \circ_2\ar[u]&\cdots& \circ_{t-2}\ar[r] &\circ_{t-1} \ar[u]\ar[r] &\circ_t. \ar[u]    }\]
Using Theorem 4.1 in \cite{Chen1}, $\underline{\Gproj}(B)$ is triangulated equivalent to $\underline{\Gproj}(C)$, where $C$ is the cluster-tilted algebra of type A as follows:
\[\xymatrix {&\circ \ar[dl]& \circ\ar[dl]& && \circ \ar[dl] &\circ \ar[dl] &\\
\bullet_{v_1}\ar[r] &\circ_1 \ar[r] \ar[u]& \circ_2\ar[u]&\cdots& \circ_{t-2}\ar[r] &\circ_{t-1} \ar[u]\ar[r] &\circ_t \ar[u] &\circ\ar[l]  . }\]

From Table 3 or Lemma 3.3 in \cite{BHL2}, we get that the mutation at the vertex $t$ is a good mutation. Applying the mutation at $t$, we get that $C$ is derived equivalent to the cluster-tilted algebra $\Gamma$ of type A
as follows:
\[\xymatrix {&\circ \ar[dl]& \circ\ar[dl]& && \circ \ar[dl] & & \circ \ar[dl]\\
\bullet_{v_1}\ar[r] &\circ_1 \ar[r] \ar[u]& \circ_2\ar[u]&\cdots& \circ_{t-2}\ar[r] &\circ_{t-1} \ar[u] &\circ_t \ar[l]\ar[r] &\circ. \ar[u]   }\]

Applying Lemma \ref{lemma recollement splits} and inductively, we get that $\underline{\Gproj}(\Gamma)\simeq \coprod_{t(Q)}S_3$. So $$\underline{\Gproj}(\Lambda)\simeq \coprod_{t(Q)}\underline{\mod} S_3.$$
\end{proof}

\begin{theorem}\label{theorem rooted quiver to type A}
Let $Q$ be the gluing of $k$ rooted quivers of type $A$, say
$(Q_1,v_1)$, $(Q_2,v_2),\dots,(Q_k,v_k)$, to $Q_0$ at the distinct vertices $c_1,\dots,c_k$. If $KQ_0/I_0$ is a Gorenstein algebra, then $KQ/I$ is also a Gorenstein algebra and
$$\underline{\Gproj} (KQ/I)\simeq \underline{\Gproj} (KQ_0/I_0)\coprod_{t(Q_1)+t(Q_2)+\cdots+t(Q_k)} \underline{\mod} (S_3).$$
\end{theorem}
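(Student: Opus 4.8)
The plan is to reduce to the case where every $(Q_i,v_i)$ is in standard form, and then to peel off the rooted quivers one at a time, each peeling step splitting the stable category by means of Lemma \ref{lemma recollement splits}.

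First I would reduce to standard form. By Proposition \ref{proposition rooted quiver of type A} each $(Q_i,v_i)$ is connected to its standard form by a sequence of good mutations outside the root, and that standard form still has $t(Q_i)$ triangles. By Proposition \ref{proposition Gorenstein algebra under good mutation} the corresponding algebra mutations, performed at vertices lying in $Q_i\setminus\{v_i\}$, produce derived equivalences of the glued algebra $KQ/I$ while leaving $KQ_0/I_0$ untouched, and they preserve the Gorenstein property. Since a derived equivalence carries $K^b(\proj)$ to $K^b(\proj)$, it preserves the singularity category, and for Gorenstein algebras the singularity category is $\underline{\Gproj}$; hence it suffices to prove the theorem when each $(Q_i,v_i)$ is already in standard form.

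The core of the argument is an induction on the number $k$ of glued rooted quivers. For $k=0$ we have $Q=Q_0$ and nothing to prove. For $k\geq 1$, in the standard form the root $v_1=c_1$ has a single outgoing arrow $v_1\to p$ to the first non-root vertex, and $v_1$ lies in no triangle. I would therefore view $Q$ as the single-arrow gluing, in the sense preceding Lemma \ref{lemma recollement splits}, of the full subquiver $Q''$ on $Q_0\cup Q_2\cup\cdots\cup Q_k$ (with marked vertex $w=v_1$) and the full subquiver $Q'=Q_1\setminus\{v_1\}$ (with marked vertex $v=p$), joined by the arrow $v_1\to p$. \textbf{The step I expect to be the main obstacle} is to verify that this cut genuinely realizes $KQ/I$ as the upper triangular matrix algebra $\left(\begin{smallmatrix} B & M\\ 0 & C\end{smallmatrix}\right)$ with $B=\Lambda_{Q'}$, $C=\Lambda_{Q''}$ and $M$ projective on both sides: one must use that the type-$2$ root neighborhood contributes exactly one arrow across the cut and no relation spanning it, so that every triangle, and hence every relation, stays entirely on one side.

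Granting this structural point, the conclusion follows quickly. Since $B$ is a cluster-tilted algebra of type $A$ it is Gorenstein, and $C$ is Gorenstein by the inductive hypothesis; hence $KQ/I$ is Gorenstein by the matrix-algebra criterion \cite[Theorem 3.3]{Chen1}. Lemma \ref{lemma recollement splits} then gives $\underline{\Gproj}(KQ/I)\simeq\underline{\Gproj}(B)\coprod\underline{\Gproj}(C)$. Because $v_1$ lies in no triangle, $Q'=Q_1\setminus\{v_1\}$ is a connected quiver mutation equivalent to type $A$ with $t(Q')=t(Q_1)$, so Corollary \ref{corollary CM module of cluster-tilted algebra of type A} yields $\underline{\Gproj}(B)\simeq\coprod_{t(Q_1)}\underline{\mod}(S_3)$; and $Q''$ is the gluing of the $k-1$ rooted quivers $Q_2,\dots,Q_k$ to $Q_0$, so the inductive hypothesis gives $\underline{\Gproj}(C)\simeq\underline{\Gproj}(KQ_0/I_0)\coprod_{t(Q_2)+\cdots+t(Q_k)}\underline{\mod}(S_3)$. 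Assembling these two identifications produces exactly the claimed decomposition.
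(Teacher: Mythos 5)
Your argument follows the paper's strategy (reduce to standard form by good mutations, then cut off the rooted quivers one at a time via the single‑arrow gluing of Lemma \ref{lemma recollement splits} and apply Corollary \ref{corollary CM module of cluster-tilted algebra of type A}), but it has a genuine gap at exactly the point you flagged. The assertion that ``in the standard form the root $v_1$ has a single outgoing arrow $v_1\to p$ and lies in no triangle'' is false when $s(Q_1)=3t(Q_1)$, i.e.\ when $Q_1$ has $2t(Q_1)+1$ vertices and its standard form is a chain of $t(Q_1)$ triangles with the root as a vertex of the first triangle (recall the root's neighborhood is allowed to be the oriented $3$-cycle in the first row of Figure 1, and a counting argument shows this is forced when $s(Q_1)=3t(Q_1)$). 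In that case there are two arrows joining $v_1$ to $Q_1\setminus\{v_1\}$ and the zero relations of that $3$-cycle cross any such cut, so $KQ/I$ is \emph{not} an upper triangular matrix algebra $\left(\begin{smallmatrix} B & M\\ 0 & C\end{smallmatrix}\right)$ with $M$ a one‑sided bimodule coming from a single arrow, and Lemma \ref{lemma recollement splits} does not apply. Moreover you cannot mutate your way out of this: good mutations outside the root preserve $(s,t)$, and every rooted quiver with $s=3t$ has its root in a triangle.

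The paper closes this case with an extra idea you are missing: it first replaces $KQ/I$ by a one‑point (co)extension (gluing an extra vertex onto the far end of $Q_1$), which by \cite[Theorem 3.3]{Chen1} and \cite[Theorem 4.1]{Chen1} preserves both the Gorenstein property and the singularity category; after a good mutation (Lemma 3.3 / Table 3 of \cite{BHL2}) the enlarged rooted quiver has $s>3t$, i.e.\ a genuine line vertex between the root and the first triangle, and only then is the single‑arrow cut and hence Lemma \ref{lemma recollement splits} legitimate. For the case $s(Q_1)>3t(Q_1)$ your argument is correct and coincides with the paper's; you should either treat the degenerate case separately by this extension‑plus‑mutation device, or justify the Gorenstein claim there independently (it also needs an argument, since \cite[Theorem 3.3]{Chen1} is what the paper uses and it requires the triangular matrix presentation you do not have in that case).
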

\begin{proof}
By induction on $k$, we only need prove that for the case $k=1$.

If $(Q_1,v_1)$ satisfies $s(Q)=3t(Q)$, then it is as the following diagram shows
\[\xymatrix {&\circ \ar[dl]& \circ\ar[dl]& && \circ \ar[dl] &\circ \ar[dl] \\
\bullet_{v_1}\ar[r] &\circ_1 \ar[r] \ar[u]& \circ_2\ar[u]&\cdots& \circ_{t-2}\ar[r] &\circ_{t-1} \ar[u]\ar[r] &\circ_t .\ar[u]    }\]

Let $(Q_1',v_1)$ be the rooted quiver
\[\xymatrix {&\circ \ar[dl]& \circ\ar[dl]& && \circ \ar[dl] &\circ \ar[dl] &\\
\bullet_{v_1}\ar[r] &\circ_1 \ar[r] \ar[u]& \circ_2\ar[u]&\cdots& \circ_{t-2}\ar[r] &\circ_{t-1} \ar[u]\ar[r] &\circ_t \ar[u] &\circ.\ar[l] }\]
Let $Q'$ be the gluing of rooted quiver $(Q_1',v_1)$ to $Q_0$ at $c_1$. Then by Theorem 4.1 in \cite{Chen1}, we get that $D^b_{sg}(KQ'/I')\simeq D^b_{sg}(KQ/I)$.

Let $Q''$ be the gluing quiver of rooted quiver $(Q_1'',v_1)$ to $Q_0$ at $c_1$, where $Q_1''$ is as follows
\[\xymatrix {&&\circ \ar[dl]& \circ\ar[dl]& && \circ \ar[dl] &\circ \ar[dl] \\
\bullet_{v_1}\ar[r]&\circ_0 \ar[r] &\circ_1 \ar[r] \ar[u]& \circ_2\ar[u]&\cdots& \circ_{t-2}\ar[r] &\circ_{t-1} \ar[u]\ar[r] &\circ_t \ar[u]    }\]

By Proposition \ref{proposition rooted quiver of type A} and Proposition \ref{proposition Gorenstein algebra under good mutation}, we get that
$KQ'/I'$ is derived equivalent to $KQ''/I''$. From \cite[Theorem 3.3]{Chen1}, we get that $KQ''/I''$ is a Gorenstein algebra and then so is $KQ'/I'$.
Furthermore, $KQ/I$ is also a Gorenstein algebra.

Applying Lemma \ref{lemma recollement splits} and Corollary \ref{corollary CM module of cluster-tilted algebra of type A}, we get that $$\underline{\Gproj } (KQ''/I'')\simeq \underline{\Gproj} (KQ_0/I_0)\coprod_{t} \underline{\mod }(S_3).$$

For the other case $s(Q_1)>3t(Q_1)$, also by Proposition \ref{proposition rooted quiver of type A}, we can assume that $(Q_1,v_1)$ is the rooted quiver as the following diagram shows:
\[\xymatrix {&&&  &\circ \ar[dl]  &&&  \circ \ar[dl]  &\\
\bullet_{v_1}\ar[r] &\circ_1 \ar[r]& \cdots \ar[r] &\circ_r \ar[r] &\circ\ar[u] & \cdots& \circ \ar[r] & \circ \ar[u]    }\]
where $r\geq1$ since $s(Q_1)>3t(Q_1)$.
Then $KQ/I$ is Gorenstein follows from \cite[Theorem 3.3]{Chen1} immediately. By Lemma \ref{lemma recollement splits} and Corollary \ref{corollary CM module of cluster-tilted algebra of type A}, it is easy to see that
$$\underline{\Gproj} (KQ/I)\simeq \underline{\Gproj }(KQ_0/I_0)\coprod_{t} \underline{\mod} (S_3).$$
The proof is complete.
\end{proof}

\section{application: cluster-tilted algebras of type D}

In order to give the quivers and the relations for the cluster-tilted algebras of type $D$, we first recall the following definition.

\begin{definition}\cite{BHL2}\label{definition of Q(m,...)}
Given integers $m \geq 3, r \geq 0$ and an increasing sequence $1 \leq i_1 < i_2 < \cdots < i_r \leq m,$ we define the following quiver $Q(m,\{i_1,\dots,i_r\})$ with relations.

  (a) $Q(m,\{i_1,\dots,i_r\})$ has $m + r$ vertices, labeled $1,2,...,m$ together with $c_1,c_2,\dots,c_r,$ and its arrows are
  $$\{i\rightarrow (i+1)\}_{1\leq i\leq m}\cup \{c_j\rightarrow i_j, (i_j+1)\rightarrow c_j\}_{1\leq j\leq r},$$
\noindent where $i + 1$ is considered modulo $m$.

The full subquiver on the vertices $1,2,\cdots,m$ is thus an oriented cycle of length $m$, called the central cycle, and for every $1 \leq j \leq r$, the full subquiver on the vertices $i_j,i_j + 1,c_j$ is an oriented 3-cycle, called a spike.

\setlength{\unitlength}{0.7mm}
\begin{center}
\begin{picture}(60,70)(0,-20)
\put(60,0){\circle{2}}
\put(60,0){\vector(0,1){19}}
\put(62,-3){${}_1$}
\put(55,0){${}_{i_1}$}

\put(60,20){\circle*{2}}
\put(60,20){\vector(1,-1){9}}
\put(60,20){\vector(-1,1){14}}
\put(62,23){${}_{2}$}
\put(50,17){${}_{i_1+1}$}

\put(70,10){\circle*{2}}
\put(70,10){\vector(-1,-1){9}}
\put(72,11){${}_{c_1}$}

\put(45,35){\circle*{2}}
\put(45,35){\vector(-1,0){19}}
\put(46,37){${}_{3}$}
\put(43,31){${}_{i_2}$}

\put(25,35){\circle*{2}}
\put(25,35){\vector(1,1){9}}
\put(25,35){\vector(-1,-1){14}}
\put(22,37){${}_{4}$}
\put(23,31){${}_{i_2+1}$}
\put(23,27){${}_{\shortparallel}$}
\put(23,23){${}_{i_3}$}

\put(35,45){\circle*{2}}
\put(35,45){\vector(1,-1){9}}
\put(34,48){${}_{c_2}$}

\put(10,20){\circle*{2}}
\put(10,20){\vector(0,-1){19}}
\put(10,20){\vector(0,1){14}}
\put(5,20){${}_{5}$}
\put(12,18){${}_{i_3+1}$}

\put(10,35){\circle*{2}}
\put(10,35){\vector(1,0){14}}
\put(5,37){${}_{c_3}$}

\put(10,0){\circle*{2}}
\put(5,0){${}_{6}$}

\put(45,-15){\circle*{2}}
\put(45,-15){\vector(1,1){14}}
\put(47,-16){${}_{m}$}

%\put(70,10){\circle*{2}}

%\put(35,45){\circle*{2}}

%\put(10,35){\circle*{2}}
%\put(0,0){\draw0,0,{width(0.2),dotted(6,7),}}
\qbezier[25](10,0)(20,-25)(45,-15)

%\put(10,0){\draw 0,0,{width(.2),dotted(6,7),trace(pi,3*pi,0,cos(t),sin(t))}}

\end{picture}
\end{center}

(b) The relations on $Q(m,\{i_1,\dots,i_r\})$ are as follows:
\begin{itemize}
\item The paths $i_j,i_j + 1,c_j$ and $c_j,i_j,i_j + 1$ are zero for all $1 \leq j \leq r;$
\item For any $1 \leq j \leq r,$ the path $i_j + 1,c_j,i_j$ equals the path $i_j + 1,\dots,1,\dots,i_j$ of length $m-1$ along the central cycle;
\item For any $i\notin  \{i_1,\dots,i_r\}$, the path $i + 1,\dots,1,\dots,i$ of length $m - 1$ along the central cycle is zero.
\end{itemize}
We denote the quotient algebra by $KQ(m,\{i_1,\dots,i_r\})/I$.
\end{definition}

Note that by \cite{GP,V} or seeing Theorem \ref{theorem cluster-tilted algebra of type D} in the following directly, we know that $KQ(m,\{i_1,\dots,i_r\})/I$ is a cluster-tilted algebra of type $D$.

\begin{definition}\label{definition of d_Q}
Let  $Q$ be the quiver of $Q(m,\{i_1,...,i_r\})$, define the distances $d_1,d_2,\dots,d_r$ by
$$d_1=i_2-i_1, d_2=i_3-i_2,\cdots, d_r=i_1+m-i_r,$$
and define  $d_Q$ be the number of the distances where distance is one. %i.e. $d_Q+1$ is the number of spikes of $Q$ which has vertex in common with some other spikes.
\end{definition}

In order to prove the main result of this section, we describe a construction of matrix algebras which is obtained by X-W Chen in \cite{Chen2}, see also \cite[Section 4]{KN}. Let $A$ be a finite dimensional algebra over a field $K$. Let $_AM$ and $N_A$ be a left and right $A$-module, respectively. Then $M\otimes_KN$ becomes an $A\mbox{-}A$-bimodule. Consider an $A\mbox{-}A$-bimodule monomorphism $\phi:M\otimes_K N\rightarrow A$ such that $\Im\phi$ vanishes both on $M$ and $N$. Note that $\Im \phi\subseteq A$ is an ideal. The matrix $\Gamma=\left( \begin{array}{cc} A &M\\ N& K\end{array}\right)$ becomes an associative algebra via the following multiplication
$$\left( \begin{array}{cc} a &m\\ n& \lambda\end{array}\right)\left( \begin{array}{cc} a' &m'\\ n'& \lambda'\end{array}\right)=\left( \begin{array}{cc} aa'+\phi(m\otimes n) &am'+\lambda'm\\ na'+\lambda n'& \lambda\lambda'\end{array}\right).$$
\begin{proposition}\cite{Chen2}\label{proposition homological epimorphism induces singularity equivalences}
Keep the notation and assmption as above. Then there is a triangle equivalence $D^b_{sg}(\Gamma)\simeq D^b_{sg}(A/\Im\phi)$.
\end{proposition}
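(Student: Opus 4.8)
The plan is to exhibit $A/\Im\phi$ as an idempotent quotient $\Gamma/\Gamma f\Gamma$ for a suitable idempotent $f$, to check that the ideal $\Gamma f\Gamma$ is projective on both sides (so that the quotient map is a homological epimorphism), and then to read off the singular equivalence from the associated recollement of singularity categories. First I would fix the orthogonal idempotents $e=\left(\begin{smallmatrix}1&0\\0&0\end{smallmatrix}\right)$ and $f=\left(\begin{smallmatrix}0&0\\0&1\end{smallmatrix}\right)$ of $\Gamma$, so that $e\Gamma e\cong A$, $f\Gamma f\cong K$, $e\Gamma f\cong M$ and $f\Gamma e\cong N$.

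A direct evaluation of the products $\gamma f\gamma'$ in the given multiplication then shows
\[\Gamma f\Gamma=\begin{pmatrix}\Im\phi & M\\ N & K\end{pmatrix},\]
so that $\Gamma/\Gamma f\Gamma\cong A/\Im\phi$. Here the standing hypotheses on $\phi$ enter: the assumption that $\Im\phi$ annihilates both $M$ and $N$ is exactly what makes the matrix multiplication associative, so that $\Gamma$ is an honest algebra and the quotient comes out as the ordinary algebra $A/\Im\phi$ rather than as a deformation carrying extra structure.

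Next I would analyze $\Gamma f\Gamma$ homologically. Regarding $\Gamma f\cong\left(\begin{smallmatrix}M\\K\end{smallmatrix}\right)$ as a projective left $\Gamma$-module and $f\Gamma\cong\left(\begin{smallmatrix}N&K\end{smallmatrix}\right)$ as a projective right $\Gamma$-module, the multiplication map $\Gamma f\otimes_{f\Gamma f}f\Gamma\to\Gamma f\Gamma$ is an isomorphism precisely because $\phi$ is a monomorphism: on the top-left corner it is $M\otimes_K N\xrightarrow{\phi}\Im\phi$, and on the remaining corners it is the identity. Since $f\Gamma f\cong K$ is a field, all higher $\Tor^{f\Gamma f}_{i}(\Gamma f,f\Gamma)$ vanish, so $\Gamma f\Gamma$ is a stratifying ideal; moreover $\Gamma f\otimes_K f\Gamma$ is a finite direct sum of copies of $\Gamma f$ (respectively of $f\Gamma$), so $\Gamma f\Gamma$ is projective both as a left and as a right $\Gamma$-module. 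Consequently the canonical surjection $\Gamma\to\Gamma/\Gamma f\Gamma=A/\Im\phi$ is a homological epimorphism whose kernel has projective dimension $0$.

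Finally I would invoke the recollement of singularity categories attached to a stratifying ideal of finite projective dimension, of the shape $D^b_{sg}(\Gamma/\Gamma f\Gamma)\to D^b_{sg}(\Gamma)\to D^b_{sg}(f\Gamma f)$. Because $f\Gamma f\cong K$ has finite global dimension, the right-hand term $D^b_{sg}(f\Gamma f)$ is zero, and in a recollement with a vanishing outer term the remaining functor is a triangle equivalence; this yields $D^b_{sg}(A/\Im\phi)\simeq D^b_{sg}(\Gamma)$. The step I expect to be the main obstacle is the clean justification of this recollement, since the general slogan that a homological epimorphism induces a singular equivalence fails without a finiteness hypothesis on the kernel; the genuine content is that here the corner $f\Gamma f$ is the ground field, which collapses all the $\Tor$-obstructions and even makes $\Gamma f\Gamma$ projective, placing us in the most favorable case of the recollement theorem.
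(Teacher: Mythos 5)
The paper gives no proof of this proposition at all --- it is quoted from [Chen2] --- so there is nothing in the text to compare against; your reconstruction is correct and is essentially the argument behind Chen's result. Your identification $\Gamma f\Gamma=\left(\begin{smallmatrix}\Im\phi & M\\ N & K\end{smallmatrix}\right)$, hence $\Gamma/\Gamma f\Gamma\cong A/\Im\phi$, the observation that the multiplication map $\Gamma f\otimes_{K}f\Gamma\to\Gamma f\Gamma$ is an isomorphism precisely because $\phi$ is monic (so $\Gamma f\Gamma$ is a stratifying ideal, projective on both sides), and your remark that the vanishing of $\Im\phi$ on $M$ and $N$ is what makes the multiplication associative, are all correct. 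The one step you leave as a black box --- that a stratifying ideal which is projective on both sides, with corner $f\Gamma f\cong K$ of finite global dimension, induces a triangle equivalence $D^b_{sg}(\Gamma)\simeq D^b_{sg}(\Gamma/\Gamma f\Gamma)$ --- is exactly the main theorem of [Chen2], so your argument is no less self-contained than the paper's bare citation.
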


Then we have the following lemmas which are crucial to prove the main result of this section.

\begin{lemma}\label{lemma singularity category of cluster-tilted algebra of D}
Let $Q$ be the quiver of $Q(m,\{i_1,...,i_r\})$. If the distance $d_j>1$ for any $1\leq j\leq r$, then
$$\underline{\Gproj} (KQ/I)\simeq \underline{\mod }(S_{m}),$$
where $I$ is the ideal generated by the relations defined in Definition \ref{definition of Q(m,...)}.
\end{lemma}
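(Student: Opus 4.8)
The algebra $\Lambda = KQ/I$ for $Q = Q(m,\{i_1,\dots,i_r\})$ has the central $m$-cycle together with $r$ spikes attached at consecutive pairs $(i_j, i_j+1)$. The hypothesis $d_j > 1$ for all $j$ says that no two spikes sit at adjacent vertices of the central cycle, i.e.\ the vertices carrying spikes are genuinely separated along the cycle. The plan is to strip off the spikes one at a time using the matrix-algebra construction of Proposition \ref{proposition homological epimorphism induces singularity equivalences}, reducing from $\Lambda$ to the algebra of the bare central $m$-cycle with the length-$(m-1)$ relations, which is exactly $S_m$.

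\emph{Recognizing a spike as Chen's matrix algebra.} First I would isolate one spike, say the one at $c_r$ attached to $i_r, i_r+1$. The idea is to write $\Lambda$ in the form $\Gamma = \left(\begin{array}{cc} A & M \\ N & K\end{array}\right)$ where the bottom-right $K$ is the trivial algebra at the vertex $c_r$, and $A$ is the subalgebra supported on all the remaining vertices. Here $M = e_A \Lambda e_{c_r}$ and $N = e_{c_r}\Lambda e_A$ record the arrows into and out of $c_r$, namely the arrow $c_r \to i_r$ (giving $N$) and $(i_r+1)\to c_r$ (giving $M$). The composite $M\otimes_K N \to A$ is induced by the path $(i_r+1)\to c_r \to i_r$, which the relations of Definition \ref{definition of Q(m,...)} equate with the length-$(m-1)$ path $(i_r+1),\dots,1,\dots,i_r$ around the central cycle. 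The key verification is that this gives a well-defined bimodule \emph{monomorphism} $\phi$ whose image is an ideal vanishing on $M$ and $N$: the zero relations $i_j,i_j+1,c_j$ and $c_j,i_j,i_j+1$ are precisely what forces $\Im\phi$ to annihilate $M$ and $N$. Then Proposition \ref{proposition homological epimorphism induces singularity equivalences} yields $D^b_{sg}(\Lambda)\simeq D^b_{sg}(A/\Im\phi)$, and $A/\Im\phi$ is the quiver algebra obtained from $Q$ by deleting the vertex $c_r$, removing its two arrows, and imposing the relation that kills the long central path through $i_r$ — i.e.\ exactly $Q(m,\{i_1,\dots,i_{r-1}\})/I'$.

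\emph{Induction and base case.} Iterating this, each spike removal decreases $r$ by one while leaving the central cycle and all its relations intact, so after $r$ steps I obtain $D^b_{sg}(\Lambda)\simeq D^b_{sg}(KC_m/J)$, where $C_m$ is the pure central $m$-cycle and $J$ is generated by the relation that every length-$(m-1)$ path around the cycle is zero (since now every $i\notin\{i_1,\dots,i_r\}$, i.e.\ every $i$, carries the long zero relation). This quotient is precisely the self-injective Nakayama algebra $S_m$ of the cyclic quiver on $m$ vertices modulo paths of length $m-1$. Since $S_m$ is self-injective, $D^b_{sg}(S_m)\simeq\underline{\mod}(S_m)$ by Buchweitz--Happel, giving $\underline{\Gproj}(KQ/I)\simeq\underline{\mod}(S_m)$.

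\emph{Main obstacle.} The delicate point is the inductive bookkeeping of the relations: I must check at each step that after deleting $c_r$ the induced relation on $A/\Im\phi$ really is the correct long central relation at $i_r$ and nothing more, so that the reduced algebra is again of the form $Q(m,\{\dots\})/I$. This is exactly where $d_j>1$ is used — if two spikes were adjacent, removing one could interfere with the commutativity relation $(i+1),c,i = (i+1),\dots,i$ of the neighboring spike, and the bimodule $\phi$ might fail to be injective or its image might not vanish correctly. Verifying that the separation hypothesis guarantees the monomorphism property of $\phi$ and the clean identification of the successive quotients is the crux; the final passage to $S_m$ and the invocation of self-injectivity are then routine.
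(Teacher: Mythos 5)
Your proposal is correct and follows essentially the same route as the paper: the paper also peels off the spikes one at a time by writing the algebra as Chen's matrix algebra $\left(\begin{smallmatrix} A & K\alpha \\ K\beta & K\end{smallmatrix}\right)$ with the spike vertex in the corner, invoking Proposition \ref{proposition homological epimorphism induces singularity equivalences} to pass to $A/\Im\phi = Q(m,\{i_2,\dots,i_r\})$, and inducting down to $S_m$. Your identification of where $d_j>1$ enters (keeping the spikes from interfering with one another so that $\phi$ is monic and the quotient is again of the same form) matches the role it plays in the paper's argument.
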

\begin{proof}
We prove the statement by induction on $r$.

Firstly, when $r=0$, we have nothing to prove.

Secondly, when $r>0$, because each distance $i_l-i_{l-1}>1$ for any $l$ modulo $m$, without losing generalization, we can assume that $Q$ is as the following diagram shows.

\setlength{\unitlength}{0.7mm}
\begin{center}
\begin{picture}(60,70)(0,-20)
\put(60,0){\circle*{2}}
\put(60,0){\vector(0,1){19}}
\put(62,-3){${}_1$}
\put(55,0){${}_{j_1}$}

\put(60,20){\circle*{2}}
\put(60,20){\vector(1,-1){9}}
\put(60,20){\vector(-1,1){14}}
\put(62,23){${}_{2}$}
\put(50,17){${}_{j_1+1}$}

\put(70,10){\circle*{2}}
\put(70,10){\vector(-1,-1){9}}
\put(72,11){${}_{c_1}$}

\put(45,35){\circle*{2}}
\put(45,35){\vector(-1,0){19}}
\put(46,37){${}_{3}$}
\put(43,31){${}_{j_2}$}

\put(25,35){\circle*{2}}
\put(25,35){\vector(1,1){9}}
\put(25,35){\vector(-1,-1){14}}
\put(22,37){${}_{4}$}
\put(23,31){${}_{j_2+1}$}

\put(35,45){\circle*{2}}
\put(35,45){\vector(1,-1){9}}
\put(34,48){${}_{c_2}$}

\put(10,20){\circle*{2}}
\put(10,20){\vector(0,-1){19}}

\put(5,20){${}_{5}$}

\put(10,0){\circle*{2}}
\put(5,0){${}_{6}$}

\put(45,-15){\circle*{2}}
\put(45,-15){\vector(1,1){14}}
\put(47,-16){${}_{m}$}

\put(65,17){${}_{\beta}$}
\put(65,5){${}_{\alpha}$}
\put(60,10){${}_{\gamma}$}
%\put(70,10){\circle*{2}}

%\put(35,45){\circle*{2}}

%\put(10,35){\circle*{2}}
%\put(0,0){\draw0,0,{width(0.2),dotted(6,7),}}
\qbezier[25](10,0)(20,-25)(45,-15)

%\put(10,0){\draw 0,0,{width(.2),dotted(6,7),trace(pi,3*pi,0,cos(t),sin(t))}}

\end{picture}
\end{center}

Denote by $U=KQ/I$, and let $e_{c_1}$ be the idempotent corresponding to the vertex $c_1$. Let $V$ be the quotient algebra of $U$ module the ideal generated by $e_{c_1}$, namely $Ue_{c_1}U$. The quiver of $V$ is obtained from $Q$ by removing the vertex $c_1$ and the adjacent arrows $\alpha,\beta$.

Let $A=(1-e_{c_1})U(1-e_{c_1})$. Then $K\alpha$ and $K\beta$ are naturally a left and right $A$-module, respectively, since $\beta\gamma=0$ and $\gamma\alpha=0$ in $A$.

We identify $U$ with $\left(\begin{array}{cc} A &K\alpha \\ K\beta &K  \end{array} \right)$, where the $K$ in the southeast corner is identified with $e_{c_1}U e_{c_1}$, and $A=(1-e_{c_1})U(1-e_{c_1})$. The morphism $\phi:K\alpha\otimes_K K\beta\rightarrow A$ maps $\alpha\otimes\beta$ to $\alpha\beta$, so $\phi$ is a monomorphism.
Furthermore, it is easy to see that $A/\Im\phi=V$.
Then Proposition \ref{proposition homological epimorphism induces singularity equivalences} yields a triangulated equivalence $D^b_{sg}(U)\simeq D^b_{sg}(V)$.

Inductively, we can get that $\underline{\Gproj}(U)\simeq D^b_{sg}(U)\simeq D^b_{sg}(S_{m})\simeq\underline{\mod}S_{m}$, and then $\underline{\Gproj}(KQ/I)\simeq \underline{\Gproj}(U)\simeq  \underline{\mod}S_{m}.$
\end{proof}

\begin{lemma}\label{the other lemma singularity category of cluster-tilted algebra of D}
Let $Q$ be the quiver of $Q(m,\{i_1,...,i_r\})$. If $r<m$ and there exists a distance $d_{j_0}=1$ for some $1\leq j_0\leq r$, then
$$\underline{\Gproj} (KQ/I)\simeq \underline{\mod }(S_{m+d_Q}),$$
where $I$ is the ideal generated by the relations defined in Definition \ref{definition of Q(m,...)}.
\end{lemma}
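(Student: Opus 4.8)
The plan is to induct on the number $d_Q$ of distances equal to one (Definition \ref{definition of d_Q}) and to reduce the statement to Lemma \ref{lemma singularity category of cluster-tilted algebra of D}. The base case $d_Q=0$ is precisely that lemma, since then every distance exceeds one. For the inductive step I would show that a single distance-$1$ gap can be traded for a unit increase of the central cycle length, at the cost of one spike, while preserving the singularity category: concretely, that $KQ(m,\{i_1,\dots,i_r\})/I$ with $d_{j_0}=1$ is singularity equivalent to an algebra $KQ(m+1,\{j_1,\dots,j_{r-1}\})/I$ of the same family whose distance datum has exactly $d_Q-1$ ones. Iterating the step $d_Q$ times then produces a quiver with central cycle of length $m+d_Q$ carrying $r-d_Q$ spikes, all of whose distances exceed one, and a final application of Lemma \ref{lemma singularity category of cluster-tilted algebra of D} yields $\underline{\mod}S_{m+d_Q}$.

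The subtlety, and the reason this lemma is not a formal consequence of the previous one, is that the reducing step \emph{cannot} be realised by deleting a vertex through Proposition \ref{proposition homological epimorphism induces singularity equivalences}. When $d_{j_0}=1$ the two spikes sit on the consecutive cycle vertices $i_{j_0}$ and $i_{j_0}+1$ and are joined by the nonzero path $c_{j_0+1}\to (i_{j_0}+1)\to c_{j_0}$; this path forces the space of paths into, or out of, either spike vertex to be two-dimensional and makes the multiplication map $\phi$ fail to be injective, so the hypotheses of Proposition \ref{proposition homological epimorphism induces singularity equivalences} break down at every eligible vertex. Instead I would perform the Fomin--Zelevinsky quiver mutation at one of the two spike vertices of the distance-$1$ pair, say $c_{j_0+1}$. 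A direct inspection of its neighbourhood shows that this mutation absorbs $c_{j_0+1}$ into the central cycle, lengthening it from $m$ to $m+1$, while the remaining spikes are carried along; the distance-$1$ gap at $j_0$ is destroyed and no new one is created, so that the mutated quiver is again of the form $Q(m+1,\{\dots\})$ of Definition \ref{definition of Q(m,...)} with $d_{Q'}=d_Q-1$.

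The main obstacle is to prove that this mutation is \emph{good} in the sense of Proposition \ref{proposition definition good mutation }, so that it induces a derived, hence singularity, equivalence of the associated cluster-tilted algebras. I expect this to be the technical heart of the argument: I would verify the tilting-complex condition of Proposition \ref{proposition definition good mutation } at $c_{j_0+1}$, confronting $T^{\pm}_{c_{j_0+1}}$ with the indecomposable projectives of $KQ(m,\{\dots\})/I$ read off from the relations of Definition \ref{definition of Q(m,...)}, or else identify the move within the type-$D$ analysis of \cite{BHL2}. The hypothesis $r<m$ enters exactly here: it guarantees that at least one cycle vertex carries no spike, so that the central cycle persists through each mutation and the quiver never leaves the family of Definition \ref{definition of Q(m,...)}; the degenerate case $r=m$, in which every distance equals one, is precisely the one excluded.

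Once goodness is established, $KQ(m,\{\dots\})/I$ and $KQ(m+1,\{\dots\})/I$ have triangle-equivalent singularity categories, the induction on $d_Q$ closes, and the terminal quiver (all distances $>1$, cycle length $m+d_Q$) gives, by Lemma \ref{lemma singularity category of cluster-tilted algebra of D}, the desired equivalence $\underline{\Gproj}(KQ/I)\simeq\underline{\mod}S_{m+d_Q}$.
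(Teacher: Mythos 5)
Your overall strategy coincides with the paper's: absorb a spike of a distance-one pair into the central cycle by a good mutation, iterate until all distances exceed one, and finish with Lemma \ref{lemma singularity category of cluster-tilted algebra of D}. (The paper verifies goodness exactly the way you suggest as the fallback, by citing the relevant entry, Table 7 IV.1b, of the Bastian--Holm--Ladkani analysis rather than checking the tilting-complex condition by hand.) However, your inductive step has a genuine error in the bookkeeping, and it is not merely cosmetic. When the spike $c_{j_0+1}$ is absorbed into the cycle, the two gaps adjacent to it, namely $d_{j_0}$ and $d_{j_0+1}$, merge into a single gap $d_{j_0}+d_{j_0+1}+1$. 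If it happens that $d_{j_0+1}=1$ as well, then \emph{two} distance-one gaps are destroyed while the cycle lengthens by only one, so $d_{Q'}=d_Q-2$ and $m'+d_{Q'}=m+d_Q-1$: the invariant your induction tracks is not preserved. Concretely, for $Q(5,\{1,2,3\})$ (where $d_1=d_2=1$, $d_3=3$, so $d_Q=2$ and the answer should be $\underline{\mod}(S_7)$), mutating at $c_2$ produces $Q(6,\{1,4\})$ with all distances equal to $3$, and your argument would output $\underline{\mod}(S_6)$. Since $\underline{\mod}(S_6)\not\simeq\underline{\mod}(S_7)$, this also shows that the mutation you propose cannot be good in that configuration, so the verification you defer as the ``technical heart'' is not just unfinished but impossible for your choice of vertex.

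The repair is precisely the normalization the paper makes and whose purpose you misattribute. The hypothesis $r<m$ does not merely keep the central cycle alive; it guarantees that some distance exceeds one, hence that there is a distance-one gap adjacent (around the cycle) to a gap of length $>1$. One must mutate at the spike for which the \emph{other} gap being merged is $>1$ --- in the paper's convention, relabel so that $d_1=1$ and $d_r>1$ and mutate at $c_1$, so the merged gap is $d_r+2>1$ and exactly one distance-one gap disappears, giving $d_{Q'}=d_Q-1$ on a cycle of length $m+1$. With that choice your induction closes as intended. Your preliminary remarks on why Proposition \ref{proposition homological epimorphism induces singularity equivalences} cannot be applied directly at a distance-one pair are a reasonable account of the obstruction and consistent with why the paper switches to mutations here.
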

\begin{proof}
Without losing generalization, we assume that $j_0=1$ and $d_r>1$ since $r<m$, then the quiver $Q$
is as the following diagram shows.
\setlength{\unitlength}{0.7mm}
\begin{center}
\begin{picture}(60,70)(0,-20)
\put(60,0){\circle*{2}}
\put(60,0){\vector(0,1){19}}
\put(62,-3){${}_1$}
\put(55,0){${}_{i_1}$}

\put(60,20){\circle*{2}}
\put(60,20){\vector(1,-1){9}}
\put(60,20){\vector(-1,1){14}}
\put(62,23){${}_{2}$}
\put(54,17){${}_{i_2}$}

\put(45,35){\vector(1,0){14}}
\put(60,35){\circle*{2}}
\put(60,35){\vector(0,-1){14}}
\put(70,10){\circle*{2}}
\put(70,10){\vector(-1,-1){9}}
\put(72,11){${}_{c_1}$}

\put(45,35){\circle*{2}}
\put(45,35){\vector(-1,0){19}}
\put(46,37){${}_{3}$}
\put(38,31){${}_{i_2+1}$}

\put(25,35){\circle*{2}}

\put(25,35){\vector(-1,-1){14}}
\put(22,37){${}_{4}$}

\put(10,20){\circle*{2}}
\put(10,20){\vector(0,-1){19}}

\put(5,20){${}_{5}$}

\put(10,0){\circle*{2}}
\put(5,0){${}_{6}$}

\put(45,-15){\circle*{2}}
\put(45,-15){\vector(1,1){14}}
\put(47,-16){${}_{m}$}

%\put(70,10){\circle*{2}}

%\put(35,45){\circle*{2}}

%\put(10,35){\circle*{2}}
%\put(0,0){\draw0,0,{width(0.2),dotted(6,7),}}
\qbezier[25](10,0)(20,-25)(45,-15)

%\put(10,0){\draw 0,0,{width(.2),dotted(6,7),trace(pi,3*pi,0,cos(t),sin(t))}}

\end{picture}
\end{center}

By the good mutation of cluster-tilted algebras of type $D$ defined in the Table 7 IV.1b in \cite{BHL1}, we know that
the mutation at $c_1$ is good. After applying the mutation $\mu_{c_1}$,
we get that $KQ/I$ is derived equivalent to $KQ'/I'$ where the quiver of $Q'$ is $Q(m+1,\{j_2,...,j_{r}\})$ as the following diagram shows.
\setlength{\unitlength}{0.7mm}
\begin{center}
\begin{picture}(60,70)(0,-20)
\put(60,0){\circle*{2}}

\put(62,-3){${}_1$}

\put(60,20){\circle*{2}}

\put(60,20){\vector(-1,1){14}}
\put(62,23){${}_{2}$}
\put(54,17){${}_{j_2}$}

\put(45,35){\vector(1,0){14}}
\put(60,35){\circle*{2}}
\put(60,35){\vector(0,-1){14}}
\put(70,10){\circle*{2}}
\put(60,0){\vector(1,1){9}}
\put(70,10){\vector(-1,1){9}}
\put(72,11){${}_{c_1}$}

\put(45,35){\circle*{2}}
\put(45,35){\vector(-1,0){19}}
\put(46,37){${}_{3}$}
\put(38,31){${}_{j_2+1}$}

\put(25,35){\circle*{2}}

\put(25,35){\vector(-1,-1){14}}
\put(22,37){${}_{4}$}

\put(10,20){\circle*{2}}
\put(10,20){\vector(0,-1){19}}

\put(5,20){${}_{5}$}

\put(10,0){\circle*{2}}
\put(5,0){${}_{6}$}

\put(45,-15){\circle*{2}}
\put(45,-15){\vector(1,1){14}}
\put(47,-16){${}_{m}$}

%\put(70,10){\circle*{2}}

%\put(35,45){\circle*{2}}

%\put(10,35){\circle*{2}}
%\put(0,0){\draw0,0,{width(0.2),dotted(6,7),}}
\qbezier[25](10,0)(20,-25)(45,-15)

%\put(10,0){\draw 0,0,{width(.2),dotted(6,7),trace(pi,3*pi,0,cos(t),sin(t))}}

\end{picture}
\end{center}

It is easy to see that $d_{Q'}=d_Q-1$. If $d_Q=1$, then each distance $d_j>1$ for $Q'$, then
$$\underline{\Gproj} (KQ/I)\simeq\underline{\Gproj} (KQ'/I')\simeq\underline{\mod }(S_{m+1}),$$
by Lemma \ref{lemma singularity category of cluster-tilted algebra of D}. Otherwise, if $d_Q>1$, then there exists a distance $d_{j'}=1$ for some $2\leq j'\leq r$. We can apply the good mutations of cluster-tilted algebras of type $D$
same to the above recursively and get that
$$D^b(KQ'/I')\simeq D^b(KQ''/I''),$$
where $Q''$ is the quiver $Q(m+d_Q,\{j'_1,...,j'_{r'}\})$
with each distance $d_i>1$ for $1\leq i\leq r'$.
Then Lemma \ref{lemma singularity category of cluster-tilted algebra of D} yields that
$$\underline{\Gproj} (KQ/I)\simeq\underline{\Gproj} (KQ'/I')\simeq \underline{\Gproj} (KQ''/I'')\simeq\underline{\mod }(S_{m+d_Q}).$$

\end{proof}
\begin{lemma}\label{the final lemma singularity category of cluster-tilted algebra of D}
Let $Q$ be the quiver of $Q(m,\{1,...,m\})$. Then
$$\underline{\Gproj} (KQ/I)\simeq \underline{\mod }(S_{2m}),$$
where $I$ is the ideal generated by the relations defined in Definition \ref{definition of Q(m,...)}.
\end{lemma}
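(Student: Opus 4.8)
The plan is to treat $Q(m,\{1,\dots,m\})$ as the genuinely new extremal case: every vertex of the central cycle carries a spike, so $r=m$ and all distances equal $1$, and neither reduction used above is available. The matrix reduction of Lemma~\ref{lemma singularity category of cluster-tilted algebra of D} requires a distance $>1$ --- when the two feet of a spike are also feet of its neighbours, $K\alpha$ ceases to be a one-dimensional $A$-submodule and the presentation $\left(\begin{smallmatrix}A&K\alpha\\ K\beta&K\end{smallmatrix}\right)$ breaks down --- and the good mutation of Lemma~\ref{the other lemma singularity category of cluster-tilted algebra of D} requires $r<m$. Moreover a naive mutation at a spike $c_1$ would absorb it into the cycle but merge the two adjacent distance-$1$ gaps into a single gap of length $3$, producing $S_{2m-1}$ instead of $S_{2m}$; so that mutation cannot be good, and a direct argument is forced.

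First I would show that $\Lambda:=KQ(m,\{1,\dots,m\})/I$ is self-injective. Computing the indecomposable projectives from Definition~\ref{definition of Q(m,...)} via the relations $p_{i-1}q_{i-1}=(\text{the central path of length }m-1)$ and $a_{i-1}p_{i-1}=0=q_ia_i$, one finds that $P_i$ is $(m+1)$-dimensional with top $S_i$, with $\rad/\rad^2=S_{i+1}\oplus S_{c_{i-1}}$, and with simple socle $S_{i-1}$, while each $P_{c_j}$ is uniserial with composition factors $S_{c_j},S_j,S_{c_{j-1}}$ from the top, hence with simple socle $S_{c_{j-1}}$. Thus every indecomposable projective has simple socle and $P\mapsto\soc P$ is a bijection onto the simples, so the injective envelope of $\Lambda$ is $\bigoplus_k I_k=D(\Lambda)$; comparing dimensions gives $\Lambda=E(\Lambda)$, so $\Lambda$ is self-injective. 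In particular $\Lambda$ is $0$-Gorenstein, $\Gproj(\Lambda)=\mod\Lambda$, and its singularity category is the whole stable category, so it remains to prove $\underline{\mod}(\Lambda)\simeq\underline{\mod}(S_{2m})$ as triangulated categories.

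Both algebras are representation-finite and self-injective (the first being cluster-tilted of type $D_{2m}$), so I would compare their stable categories through Auslander--Reiten theory. Using $\tau\cong\Omega^2\nu$ together with the syzygies of the simples read off from the modules above, I would identify the stable Auslander--Reiten quiver of $\Lambda$ with the tube $\Z A_{2m-2}/\langle\tau^{2m}\rangle$ --- tree class $A_{2m-2}$, $\tau$-period $2m$, and $2m(2m-2)$ vertices --- which is exactly that of $S_{2m}$. (Counting indecomposables through the cluster category of type $D_{2m}$ confirms the number $2m(2m-2)$; for $m=3$ it is $24$.) As $\Lambda$ and $S_{2m}$ are then standard representation-finite self-injective algebras of the same type, Asashiba's classification yields a derived equivalence between them, and Rickard's theorem that a derived equivalence of self-injective algebras induces a triangle equivalence of stable categories gives $\underline{\mod}(\Lambda)\simeq\underline{\mod}(S_{2m})$, as required.

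The main obstacle is the third step. The delicate parts are (i) computing the stable Auslander--Reiten quiver of $\Lambda$ accurately enough to see that its tree class is $A_{2m-2}$, rather than a $D$-type as the cluster-tilted origin might suggest, and that its $\tau$-period is $2m$ --- the vertex count alone does not distinguish $\Z A_{2m-2}/\langle\tau^{2m}\rangle$ from some $\Z D_k/G$ --- and (ii) turning the coincidence of types into a bona fide triangle equivalence, for which the self-injective machinery of Asashiba and Rickard is the cleanest route.
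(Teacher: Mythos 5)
Your first two steps are correct and are genuinely nice observations that the paper does not make: the indecomposable projectives of $\Lambda=KQ(m,\{1,\dots,m\})/I$ are exactly as you describe ($P_i$ of dimension $m+1$ with simple socle $S_{i-1}$, and $P_{c_j}$ uniserial of length $3$ with socle $S_{c_{j-1}}$), the socle map permutes the simples, and the dimension count then forces the embedding $\Lambda\hookrightarrow D(\Lambda)$ to be an isomorphism, so $\Lambda$ is self-injective and $\underline{\Gproj}(\Lambda)=\underline{\mod}(\Lambda)$. The count of $2m(2m-2)$ stable indecomposables via the cluster category of type $D_{2m}$ is also correct and matches $S_{2m}$.

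The gap sits in your third step, and it is exactly the one you flag yourself: you never actually compute the stable Auslander--Reiten quiver of $\Lambda$. Everything after self-injectivity hinges on identifying that quiver with $\Z A_{2m-2}/\langle\tau^{2m}\rangle$, and in particular on the tree class being $A_{2m-2}$; the vertex count cannot detect this (as you concede, it does not exclude a $D$-type or a different admissible group), and ``using $\tau\cong\Omega^2\nu$ together with the syzygies of the simples'' is a description of a method, not the computation. Until the tree class and the $\tau$-period are actually established, Asashiba's classification cannot be invoked, so the proof is incomplete at its central point. (Once the tree class $A_{2m-2}$ is in hand the rest does go through: even rank rules out the M\"obius types, the vertex count forces $\tau$-period $2m$, tree class $A$ forces standardness, and Asashiba plus Rickard give the triangle equivalence.) For comparison, the paper avoids Auslander--Reiten theory entirely: it first replaces $KQ/I$ by its one-point extension by the projective $P_{c_m}$, which does not change the singularity category by Theorem 4.1 of \cite{Chen1}, and in the extended quiver the mutation at $c_m$ \emph{is} good and yields $Q(m+1,\{1,\dots,m\})$, which has $r=m<m+1$ and $d=m-1$ and therefore falls under Lemma \ref{the other lemma singularity category of cluster-tilted algebra of D}, giving $\underline{\mod}(S_{(m+1)+(m-1)})=\underline{\mod}(S_{2m})$. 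The one-point extension is precisely the move that sidesteps your (correct) observation that a direct mutation at a spike of $Q$ itself is unavailable.
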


\begin{proof}
The quiver of $Q$ is as the following diagram shows.
\setlength{\unitlength}{0.7mm}
\begin{center}

\begin{picture}(100,60)
\put(20,50){\circle*{2}}
\put(40,50){\circle*{2}}
\put(60,50){\circle*{2}}

\put(56,54){$c_m$}

\put(20,50){\vector(1,0){19}}
\put(40,50){\vector(1,0){19}}

\put(20,30){\circle*{2}}
\put(60,30){\circle*{2}}
\put(30,10){\circle*{2}}
\put(50,10){\circle*{2}}
\put(10,15){\circle*{2}}
\put(70,15){\circle*{2}}
\put(20,30){\vector(0,1){19}}
\put(60,50){\vector(0,-1){19}}
\put(40,50){\vector(-1,-1){19}}
\put(60,30){\vector(-1,1){19}}
\put(20,30){\vector(1,-2){9}}

\put(30,10){\vector(-4,1){19}}
\put(10,15){\vector(2,3){9}}
\put(60,30){\vector(2,-3){9}}

\put(50,10){\vector(1,2){9}}
\put(70,15){\vector(-4,-1){19}}

\put(38,44){$1$}
\put(53,28){$m$}
\put(22,28){$2$}
\put(31,12){$3$}
\put(38,8){$\cdots$}
\end{picture}
\end{center}

Furthermore, let $KQ'/I'$ be the cluster-tilted algebra of type $D$ with the quiver $Q'$ as follows. Note that $KQ'/I'$ is the one-point extension algebra of $KQ/I$ by the indecomposable projective module $P_{c_m}$ corresponding to the vertex $c_m$. Then \cite[Theorem 4.1]{Chen1} implies that $D^b_{sg}(KQ/I)\simeq D^b_{sg}(KQ'/I')$.
\setlength{\unitlength}{0.7mm}
\begin{center}

\begin{picture}(100,60)
\put(20,50){\circle*{2}}
\put(40,50){\circle*{2}}
\put(60,50){\circle*{2}}
\put(80,50){\circle*{2}}
\put(56,54){$c_m$}

\put(20,50){\vector(1,0){19}}
\put(40,50){\vector(1,0){19}}
\put(80,50){\vector(-1,0){19}}

\put(20,30){\circle*{2}}
\put(60,30){\circle*{2}}
\put(30,10){\circle*{2}}
\put(50,10){\circle*{2}}
\put(10,15){\circle*{2}}
\put(70,15){\circle*{2}}
\put(20,30){\vector(0,1){19}}
\put(60,50){\vector(0,-1){19}}
\put(40,50){\vector(-1,-1){19}}
\put(60,30){\vector(-1,1){19}}
\put(20,30){\vector(1,-2){9}}

\put(30,10){\vector(-4,1){19}}
\put(10,15){\vector(2,3){9}}

\put(60,30){\vector(2,-3){9}}

\put(50,10){\vector(1,2){9}}
\put(70,15){\vector(-4,-1){19}}
\put(38,44){$1$}
\put(53,28){$m$}
\put(22,28){$2$}
\put(31,12){$3$}
\put(38,8){$\cdots$}

\end{picture}
\end{center}

By the good mutations of cluster-tilted algebras of type $D$ defined in the Table 7 IV.2b in \cite{BHL1}, we get that the mutation at the vertex $c_m$ is good. After applying the mutation $\mu_{c_m}$, we get the cluster-tilted algebra $KQ''/I''$ of type $D$ with the quiver $Q''=Q(m+1,\{1,\dots,m\})$ as the following diagram shows.

\setlength{\unitlength}{0.7mm}
\begin{center}

\begin{picture}(100,60)
\put(20,50){\circle*{2}}
\put(40,50){\circle*{2}}
\put(60,50){\circle*{2}}
\put(80,50){\circle*{2}}
\put(56,54){$m+1$}

\put(20,50){\vector(1,0){19}}
\put(60,50){\vector(-1,0){19}}
\put(60,50){\vector(1,0){19}}
\put(80,50){\vector(-1,-1){19}}

\put(20,50){${}^{c_1}$}

\put(4,15){${ }_{c_2}$}

\put(20,30){\circle*{2}}
\put(60,30){\circle*{2}}
\put(30,10){\circle*{2}}
\put(50,10){\circle*{2}}
\put(10,15){\circle*{2}}
\put(70,15){\circle*{2}}
\put(20,30){\vector(0,1){19}}
\put(60,30){\vector(0,1){19}}
\put(40,50){\vector(-1,-1){19}}

\put(20,30){\vector(1,-2){9}}

\put(30,10){\vector(-4,1){19}}
\put(10,15){\vector(2,3){9}}

\put(60,30){\vector(2,-3){9}}

\put(50,10){\vector(1,2){9}}
\put(70,15){\vector(-4,-1){19}}
\put(38,44){$1$}
\put(53,28){$m$}
\put(22,28){$2$}
\put(31,12){$3$}
\put(38,8){$\cdots$}
\put(71,15){${}_{c_{m-1}}$}
\put(82,50){${}_{c_m}$}
\end{picture}
\end{center}

It follows from Lemma \ref{the other lemma singularity category of cluster-tilted algebra of D} that
$$D^b_{sg}(KQ''/I'')\simeq \underline{\mod}S_{m+1+d_{Q''}}=\underline{\mod}S_{2m}$$
since $d_{Q''}=m-1$. Thus $$D^b_{sg}(KQ/I)\simeq D^b_{sg}(KQ'/I')\simeq D^b_{sg}(KQ''/I'')\simeq \underline{\mod}S_{2m}.$$

\end{proof}

\begin{proposition}\label{proposition to type Q(m,{i_1,...,i_r})}
Let $Q$ be the quiver of $Q(m,\{i_1,...,i_r\})$. Then
$$\underline{\Gproj} (KQ/I)\simeq \underline{\mod }(S_{m+d_Q}),$$
where $I$ is the ideal generated by the relations defined in Definition \ref{definition of Q(m,...)}.
\end{proposition}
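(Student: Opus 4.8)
The plan is to prove Proposition \ref{proposition to type Q(m,\{i_1,...,i_r\})} by combining the three lemmas just established, distinguishing cases according to how many of the distances $d_1,\dots,d_r$ equal one. The three lemmas are designed precisely to cover a trichotomy: Lemma \ref{lemma singularity category of cluster-tilted algebra of D} handles the case where every distance exceeds one (so $d_Q=0$); Lemma \ref{the other lemma singularity category of cluster-tilted algebra of D} handles the case where $r<m$ and at least one distance equals one; and Lemma \ref{the final lemma singularity category of cluster-tilted algebra of D} handles the extreme case $r=m$, i.e.\ the quiver $Q(m,\{1,\dots,m\})$, in which every spike is attached and all distances equal one so that $d_Q=m$. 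So the whole argument is essentially a case-split dispatching to the appropriate lemma.

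First I would observe that since $1\leq i_1<i_2<\cdots<i_r\leq m$, the index set $\{i_1,\dots,i_r\}$ has $r$ elements among $\{1,\dots,m\}$, so necessarily $r\leq m$, and the two boundary phenomena to separate are $r=m$ versus $r<m$. If $r=m$ then every vertex of the central cycle carries a spike, which forces $\{i_1,\dots,i_r\}=\{1,\dots,m\}$ and $Q=Q(m,\{1,\dots,m\})$; here each distance $d_j=i_{j+1}-i_j=1$, so $d_Q=m$, and Lemma \ref{the final lemma singularity category of cluster-tilted algebra of D} gives directly
$$\underline{\Gproj}(KQ/I)\simeq\underline{\mod}(S_{2m})=\underline{\mod}(S_{m+d_Q}),$$
matching the claimed formula. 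This exhausts the case $r=m$.

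Now suppose $r<m$. Then I would split on whether some distance equals one. If all $d_j>1$, then $d_Q=0$, and Lemma \ref{lemma singularity category of cluster-tilted algebra of D} applies verbatim to give $\underline{\Gproj}(KQ/I)\simeq\underline{\mod}(S_m)=\underline{\mod}(S_{m+d_Q})$. If instead some $d_{j_0}=1$, then since $r<m$ the hypotheses of Lemma \ref{the other lemma singularity category of cluster-tilted algebra of D} are met and that lemma yields $\underline{\Gproj}(KQ/I)\simeq\underline{\mod}(S_{m+d_Q})$ on the nose. I should also note the degenerate subcase $r=0$, where there are no spikes, the quiver is just the central $m$-cycle with the all-around relation, $d_Q=0$, and the formula reads $\underline{\mod}(S_m)$, consistent with the base case of Lemma \ref{lemma singularity category of cluster-tilted algebra of D}.

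The main obstacle is not really in this final assembly, which is a routine dispatch, but rather in being certain the three lemmas genuinely tile the parameter space without gap or overlap and with matching $S$-indices. The one point that deserves care is the constraint $r<m$ appearing as a hypothesis in Lemma \ref{the other lemma singularity category of cluster-tilted algebra of D}: I must verify that every configuration with at least one $d_{j_0}=1$ but $r<m$ is covered, and that the only configuration escaping both Lemma \ref{lemma singularity category of cluster-tilted algebra of D} and Lemma \ref{the other lemma singularity category of cluster-tilted algebra of D} is exactly $r=m$, which is handled separately by Lemma \ref{the final lemma singularity category of cluster-tilted algebra of D}. Once this bookkeeping is confirmed the proof is complete, so I would write it simply as: ``This follows from Lemma \ref{lemma singularity category of cluster-tilted algebra of D}, Lemma \ref{the other lemma singularity category of cluster-tilted algebra of D}, and Lemma \ref{the final lemma singularity category of cluster-tilted algebra of D}, according to whether all distances exceed one, some distance equals one with $r<m$, or $r=m$, respectively.''
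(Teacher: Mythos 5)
Your proof is correct and follows essentially the same route as the paper: the paper's own argument is exactly this case split, dispatching to Lemma \ref{lemma singularity category of cluster-tilted algebra of D} when all distances exceed one (so $d_Q=0$), to Lemma \ref{the other lemma singularity category of cluster-tilted algebra of D} when $r<m$ and some distance equals one, and to Lemma \ref{the final lemma singularity category of cluster-tilted algebra of D} when $r=m$ (so $d_Q=m$ and $S_{2m}=S_{m+d_Q}$). Your additional bookkeeping confirming that these cases tile the parameter space matches the paper's treatment.
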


\begin{proof}
Note that when $r=0$, the result is obvious. Otherwise, we discuss it in the following two cases.

Case 1: $0<r<m$. First, if each distance $d_i>1$ for any $1\leq i\leq r$, then Lemma \ref{lemma singularity category of cluster-tilted algebra of D} yields
$$\underline{\Gproj} (KQ/I)\simeq \underline{\mod }(S_{m})= \underline{\mod }(S_{m+d_Q}),$$
since $d_Q=0$.

Second, if there exists a distance $d_{j}=1$ for some $1\leq j\leq r$, then we get that
$$\underline{\Gproj} (KQ/I)\simeq \underline{\mod }(S_{m+d_Q})$$
by Lemma \ref{the other lemma singularity category of cluster-tilted algebra of D}.

Case 2: $r=m$. Then the quiver $Q$ is $Q(m,\{1,...,m\})$, so it follows from Lemma \ref{the final lemma singularity category of cluster-tilted algebra of D} that
$$ \underline{\Gproj} (KQ/I)\simeq \underline{\mod }(S_{2m})= \underline{\mod }(S_{m+d_Q})$$
since $d_Q=m$.
\end{proof}

In order to get the singularity categories of the cluster-tilted algebra of type $D$, first we describe the structures of their quivers.

\begin{theorem}\cite{GP,V}\label{theorem cluster-tilted algebra of type D}
A cluster-tilted algebra of type $D$ is belong to one of the following four families, which are  defined as gluing of rooted quivers of type $A$ to certain skeleta.

\noindent{\bf Type I.} The gluing of a rooted quiver $Q'$ of type $A$ at the vertex $c$ to the skeleta
as in the following picture where both $a$ and $b$ have an arrow to or from the same vertex $c$:

\setlength{\unitlength}{0.5mm}
\begin{center}
\begin{picture}(120,40)(0,-20)

\put(50,20){\circle*{2}}
\put(50,-20){\circle*{2}}
\qbezier(70,10)(55,0)(70,-10)
\qbezier(70,10)(110,30)(115,0)
\qbezier(70,-10)(110,-30)(115,0)
\put(70,0){\circle*{2}}
\put(50,20){\line(1,-1){20}}
\put(50,-20){\line(1,1){20}}

\put(43,18){$a$}
\put(43,-22){$b$}
\put(72,-5){$c$}

\put(90,-5){$Q'$}
\end{picture}
\end{center}

\noindent{\bf Type II.} The gluing of two rooted quivers $Q'$ and $Q''$ of type $A$ at the vertices $c'$ and $c''$ to a skeleton as following:

\setlength{\unitlength}{0.6mm}
\begin{center}
\begin{picture}(120,40)(0,-20)
\qbezier(-5,0)(0,30)(40,10)
\qbezier(-5,0)(0,-30)(40,-10)
\qbezier(40,10)(55,0)(40,-10)
\put(40,0){\circle*{2}}

\qbezier(70,10)(55,0)(70,-10)
\qbezier(70,10)(110,30)(115,0)
\qbezier(70,-10)(110,-30)(115,0)
\put(70,0){\circle*{2}}
\put(40,0){\vector(1,0){29}}
\put(55,15){\circle*{2}}
\put(55,-15){\circle*{2}}
\put(55,15){\vector(-1,-1){15}}
\put(55,-15){\vector(-1,1){15}}
\put(70,0){\vector(-1,-1){15}}
\put(70,0){\vector(-1,1){15}}

\put(31,-5){$c''$}
\put(72,-5){$c'$}

\put(54,17){$b$}
\put(54,-20){$a$}
\put(10,-5){$Q''$}
\put(90,-5){$Q'$}
\end{picture}
\end{center}

\noindent{\bf Type III.} The gluing of two rooted quivers $Q'$ and $Q''$ of type $A$ at the vertices $c'$ and $c''$ to a skeleton $S_4$ as following:

\setlength{\unitlength}{0.6mm}
\begin{center}
\begin{picture}(120,40)(0,-20)
\qbezier(-5,0)(0,30)(40,10)
\qbezier(-5,0)(0,-30)(40,-10)
\qbezier(40,10)(55,0)(40,-10)
\put(40,0){\circle*{2}}

\qbezier(70,10)(55,0)(70,-10)
\qbezier(70,10)(110,30)(115,0)
\qbezier(70,-10)(110,-30)(115,0)
\put(70,0){\circle*{2}}

\put(55,15){\circle*{2}}
\put(55,-15){\circle*{2}}
\put(55,15){\vector(-1,-1){15}}
\put(40,0){\vector(1,-1){15}}
\put(55,-15){\vector(1,1){15}}
\put(70,0){\vector(-1,1){15}}

\put(31,-5){$c''$}
\put(72,-5){$c'$}

\put(54,17){$b$}
\put(54,-20){$a$}
\put(10,-5){$Q''$}
\put(90,-5){$Q'$}
\end{picture}
\end{center}

\noindent{\bf Type IV.}
The gluing of $r \geq 0$ rooted quivers $Q_1,\dots,Q_r$ of type $A$ at the vertices $c_1,\dots,c_r$ to a skeleton $Q(m,\{i_1,...,i_r\})$ as following:

\setlength{\unitlength}{0.7mm}
\begin{center}
\begin{picture}(60,90)(0,-20)
\put(60,0){\circle*{2}}
\put(60,0){\vector(0,1){19}}
\put(62,-3){${}_1$}
\put(55,0){${}_{i_1}$}

\put(60,20){\circle*{2}}
\put(60,20){\vector(1,-1){9}}
\put(60,20){\vector(-1,1){14}}
\put(62,23){${}_{2}$}
\put(50,17){${}_{i_1+1}$}

\put(70,10){\circle*{2}}
\put(70,10){\vector(-1,-1){9}}
\put(72,11){${}_{c_1}$}

\put(45,35){\circle*{2}}
\put(45,35){\vector(-1,0){19}}
\put(46,37){${}_{3}$}
\put(43,31){${}_{i_2}$}

\put(25,35){\circle*{2}}
\put(25,35){\vector(1,1){9}}
\put(25,35){\vector(-1,-1){14}}
\put(22,37){${}_{4}$}
\put(23,31){${}_{i_2+1}$}
\put(23,27){${}_{\shortparallel}$}
\put(23,23){${}_{i_3}$}

\put(35,45){\circle*{2}}
\put(35,45){\vector(1,-1){9}}
\put(34,48){${}_{c_2}$}

\put(10,20){\circle*{2}}
\put(10,20){\vector(0,-1){19}}
\put(10,20){\vector(0,1){14}}
\put(5,20){${}_{5}$}
\put(12,18){${}_{i_3+1}$}

\put(10,35){\circle*{2}}
\put(10,35){\vector(1,0){14}}
\put(5,37){${}_{c_3}$}

\put(10,0){\circle*{2}}
\put(5,0){${}_{6}$}

\put(45,-15){\circle*{2}}
\put(45,-15){\vector(1,1){14}}
\put(47,-16){${}_{m}$}

\qbezier(70,15)(60,10)(70,5)
\qbezier(70,15)(89,23)(90,10)
\qbezier(70,5)(89,-3)(90,10)
\put(80,10){${}_{Q_1}$}
%\put(70,10){\circle*{2}}

\qbezier(30,45)(35,35)(40,45)
\qbezier(30,45)(22,64)(35,65)
\qbezier(40,45)(48,64)(35,65)
\put(33,58){${}_{Q_2}$}
%\put(35,45){\circle*{2}}

\qbezier(7,32)(17,27)(13,38)
\qbezier(7,32)(-14,41)(-5,50)
\qbezier(13,38)(4,59)(-5,50)
\put(0,43){${}_{Q_3}$}
%\put(10,35){\circle*{2}}
%\put(0,0){\draw0,0,{width(0.2),dotted(6,7),}}
\qbezier[25](10,0)(20,-25)(45,-15)

%\put(10,0){\draw 0,0,{width(.2),dotted(6,7),trace(pi,3*pi,0,cos(t),sin(t))}}

\end{picture}
\end{center}

\end{theorem}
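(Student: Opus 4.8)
The plan is to read the statement as a purely combinatorial classification of the quivers, with their canonical relations, lying in the mutation class of the Dynkin quiver $D_n$. Indeed, by \cite{BMR2} a cluster-tilted algebra of finite type is determined up to isomorphism by its quiver, and that quiver is exactly one obtained from $D_n$ by a finite sequence of quiver mutations; moreover the relations are the zero- and commutativity-relations attached to the oriented cycles (compare the type $A$ description in Remark \ref{remark cluster-tilted algebra of A} and the central-cycle relations of Definition \ref{definition of Q(m,...)}). Thus the whole content of the theorem is to describe the connected quivers mutation-equivalent to $D_n$ and to exhibit each as a gluing of rooted quivers of type $A$ onto one of four skeleta.

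First I would set up the surface model: clusters of type $D_n$ are in bijection with tagged triangulations of a once-punctured $n$-gon (one puncture, $n$ marked boundary points, so that every triangulation has exactly $n$ arcs), quiver mutation corresponds to flips of tagged arcs, and the quiver of the cluster-tilted algebra is the quiver of the triangulation. The puncture is the source of all the genuinely ``type $D$'' behaviour: any part of the triangulation that avoids the puncture behaves exactly as it would in an unpunctured polygon, that is, as type $A$. (Alternatively one may run the argument through Vatne's direct list of admissible local neighborhoods, paralleling the type $A$ treatment recalled before Figure 1, but the surface picture organizes the global analysis more transparently.)

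Next I would isolate the rooted type $A$ pieces. As in the type $A$ analysis, I would show that a vertex all of whose incident arcs miss the puncture has one of the nine admissible local neighborhoods of Figure 1, so any maximal subquiver supported away from the puncture is a rooted quiver of type $A$ attached to the rest at a single root vertex. Peeling off all such maximal pieces reduces the problem to classifying the \emph{core} determined by the arcs incident to the puncture, everything else being rooted type $A$ material glued at distinguished vertices.

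The heart of the argument is then the case analysis of how the puncture sits inside the triangulation, organized by the number $m$ of arcs incident to it. The degenerate configurations, where the puncture lies in a self-folded or near-self-folded position, produce the fork of Type I and the two small cyclic skeleta of Types II and III, the latter two being distinguished by the relative orientation of the two arcs meeting the puncture, i.e. by whether the central piece is two $3$-cycles sharing an arrow or a single oriented $4$-cycle. The generic case, where the puncture is enclosed by a fan of $m\geq 3$ arcs, yields precisely the central cycle of length $m$ with spikes, namely the skeleton $Q(m,\{i_1,\dots,i_r\})$ of Type IV, the spike data $\{i_1,\dots,i_r\}$ recording which positions of the fan carry a tagged (self-folded) arc. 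I expect the main obstacle to be exactly this last step: proving completeness and mutual exclusivity, i.e. that no other global pattern of oriented cycles around the puncture can occur and that each of the four cores is stable, up to regluing of rooted type $A$ quivers, under every flip performed outside it. This is a finite but delicate verification; I would carry it out by checking that each admissible neighborhood glues consistently and by tracking, flip by flip, how the cycle enclosing the puncture can be created, enlarged, or collapsed, so that the mutation class of $D_n$ breaks up into exactly the four announced families.
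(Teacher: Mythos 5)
The paper does not actually prove this statement: it is imported verbatim from \cite{GP} and \cite{V}, so there is no internal proof to compare against, only the strategies of those two references. Your route — reduce to a classification of the quivers in the mutation class of $D_n$ via \cite{BMR2}, then read that class off from tagged triangulations of a once-punctured $n$-gon — is a genuinely different path from either source: Vatne \cite{V} works directly with quivers, running an induction on local neighborhoods and on how mutation creates, enlarges, and destroys the cycle structure (much as in the type $A$ analysis recalled before Figure~1), while Geng--Peng \cite{GP} classify $2$-finite diagrams. The surface model buys you a clean conceptual separation between the ``type $A$ material'' (arcs missing the puncture) and the core (arcs near the puncture), and makes mutation-invariance of the four families almost automatic since flips act transitively on tagged triangulations; the price is that you must first establish the dictionary between tagged triangulations of the punctured polygon and the quivers-with-relations of Definition~\ref{definition of Q(m,...)}, which is itself a nontrivial input (Fomin--Shapiro--Thurston/Schiffler) not present in the paper's bibliography.

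That said, as written your argument is a plan rather than a proof: the decisive step — that the core around the puncture can only be one of the four announced skeleta, and that each core is preserved up to regluing of rooted type $A$ quivers under every flip — is exactly the content of the theorem, and you explicitly defer it as ``a finite but delicate verification.'' Until that case analysis is carried out (in either model), the classification is not established. One detail is also off: in the Type IV picture the spike data $\{i_1,\dots,i_r\}$ does not record taggings of radii; the spikes $c_j$ arise from honest arcs of the polygon that form an oriented triangle with two consecutive radii $i_j$ and $i_j+1$ of the fan around the puncture (self-folded/tagged configurations at the puncture instead account for the degenerate Types I--III and for the extreme case $r=m$). This does not derail the strategy, but it is the kind of slip that the deferred verification would need to catch.
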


Now, we get the main theorem of this section.

\begin{theorem}\label{theorem singularity category of D}
Let $\Lambda$ be a cluster-tilted algebra of type $D$.

(a) If $\Lambda$ is of {\bf Type I}, then $$\underline{\Gproj}(\Lambda)\simeq \coprod_{t(Q')} \underline{\Gproj}(S_3).$$

(b) If $\Lambda$ is of {\bf type II}, then
$$\underline{\Gproj}(\Lambda)\simeq \coprod_{t(Q')+t(Q'')+1} \underline{\Gproj}(S_3).$$

(c) If $\Lambda$ is of {\bf type III}, then $$\underline{\Gproj}(\Lambda)\simeq \coprod_{t(Q')+t(Q'')} \underline{\Gproj}(S_3)\coprod\underline{\Gproj}(S_4).$$

(d) If $\Lambda$ is of {\bf type IV}, then  $$\underline{\Gproj}(\Lambda)\simeq \coprod_{t(Q_1)+t(Q_2)+\dots+t(Q_r)} \underline{\Gproj}(S_3)\coprod\underline{\Gproj}(S_{m+d_Q}),$$
where $d_Q$ is defined for $Q(m,\{i_1,...,i_r\})$ as in Definition \ref{definition of d_Q}.
\end{theorem}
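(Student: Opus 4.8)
The plan is to run all four families through the same two-step reduction: realize $\Lambda$ as a gluing of rooted quivers of type $A$ onto a skeleton, apply Theorem \ref{theorem rooted quiver to type A} to strip off the type-$A$ parts, and then compute the stable category of the skeleton alone. By Theorem \ref{theorem cluster-tilted algebra of type D} every cluster-tilted algebra of type $D$ is presented as exactly such a gluing. Each skeleton $KQ_0/I_0$ occurring below is hereditary, self-injective, or itself a cluster-tilted algebra of type $D$, hence Gorenstein (the last case by the Keller--Reiten theorem that cluster-tilted algebras are $1$-Gorenstein); thus Theorem \ref{theorem rooted quiver to type A} applies and gives
\[
\underline{\Gproj}(\Lambda)\simeq \underline{\Gproj}(KQ_0/I_0)\;\coprod_{\,t(Q_1)+\cdots+t(Q_k)\,}\;\underline{\mod}(S_3).
\]
Throughout I use that each $S_l$ is self-injective, so that $\underline{\Gproj}(S_l)=\underline{\mod}(S_l)$.

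The two easy families are I and III. For \textbf{Type I} the skeleton is the three-vertex fork on $a,b,c$; it contains no oriented cycle, hence is hereditary, so $\underline{\Gproj}(KQ_0/I_0)=0$ and the displayed formula collapses to statement (a). For \textbf{Type III} the skeleton is $S_4$ by construction, so $\underline{\Gproj}(KQ_0/I_0)=\underline{\mod}(S_4)$ and statement (c) follows at once.

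The remaining two families invoke Proposition \ref{proposition to type Q(m,{i_1,...,i_r})}. For \textbf{Type IV} the skeleton is literally $Q(m,\{i_1,\dots,i_r\})$, with the $Q_j$ glued at the spike vertices $c_j$; the Proposition gives $\underline{\Gproj}(Q(m,\{i_1,\dots,i_r\}))\simeq\underline{\mod}(S_{m+d_Q})$, and substitution into the displayed formula yields (d). For \textbf{Type II} I will identify the four-vertex skeleton -- two oriented $3$-cycles sharing a single arrow -- with $Q(3,\{1\})$ via the assignment $c''\mapsto 1,\ c'\mapsto 2,\ \{a,b\}\mapsto\{3,c_1\}$, which carries its five arrows, together with its cluster-tilted relations, onto those of Definition \ref{definition of Q(m,...)}. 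Here $d_Q=0$, so the Proposition gives $\underline{\Gproj}(KQ_0/I_0)\simeq\underline{\mod}(S_3)$, and the displayed formula produces the extra $S_3$-summand responsible for the $+1$ in (b).

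The only genuine obstacle I foresee is this last identification: one must verify that the Type II skeleton carries exactly the cluster-tilted relations of $Q(3,\{1\})$, not merely the same underlying quiver, before Proposition \ref{proposition to type Q(m,{i_1,...,i_r})} is applicable. Since finite-type cluster-tilted algebras are determined by their quivers, this reduces to matching the quiver; once the four skeleta are correctly identified, the computations are routine bookkeeping with the two cited results.
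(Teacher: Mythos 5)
Your proposal is correct and follows essentially the same route as the paper: strip off the glued rooted type-$A$ quivers via Theorem \ref{theorem rooted quiver to type A} and then compute $\underline{\Gproj}$ of each skeleton, with the Type I skeleton hereditary, the Type III skeleton equal to $S_4$, and Types II and IV handled by the $Q(m,\{i_1,\dots,i_r\})$ machinery. The only (harmless) divergence is in case (b), where you identify the $D_4$ skeleton with $Q(3,\{1\})$ and invoke Proposition \ref{proposition to type Q(m,{i_1,...,i_r})} — justified, as you note, by the fact that finite-type cluster-tilted algebras are determined by their quivers — whereas the paper cites Lemma \ref{the other lemma singularity category of cluster-tilted algebra of D} directly; your version is if anything slightly more careful, since all distances of $Q(3,\{1\})$ exceed one.
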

\begin{proof}
(a) If $\Lambda$ is of {\bf Type I}, then Theorem \ref{theorem rooted quiver to type A} yields
$$\underline{\Gproj}(\Lambda)\simeq \underline{\Gproj}(KQ'')\coprod_{t(Q')} \underline{\mod}(S_3),$$
where the underlying graph of $Q''$ is
\[\xymatrix{\bullet ^a\ar@{-}[r] &\bullet^b\ar@{-}[r]&\bullet^c.}\]
It is easy to see that $\underline{\Gproj}(KQ'')=0$, and then $$\underline{\Gproj}(\Lambda)\simeq \coprod_{t(Q')} \underline{\Gproj}(S_3).$$

(b) If $\Lambda$ is of {\bf Type II}, then Theorem \ref{theorem rooted quiver to type A} yields
$$\underline{\Gproj}(\Lambda)\simeq \underline{\Gproj}(KQ'''/I''')\coprod_{t(Q')+t(Q'')} \underline{\mod}(S_3),$$
where $KQ'''/I'''$ is a cluster-tilted algebra of type $D_4$ with the quiver $Q'''$ as the following diagram shows.
\setlength{\unitlength}{0.8mm}
\begin{center}
\begin{picture}(120,40)(0,-20)

\put(40,0){\circle*{1.3}}

\put(70,0){\circle*{1.3}}
\put(40,0){\vector(1,0){29}}
\put(55,15){\circle*{1.3}}
\put(55,-15){\circle*{1.3}}
\put(55,15){\vector(-1,-1){15}}
\put(55,-15){\vector(-1,1){15}}
\put(70,0){\vector(-1,-1){15}}
\put(70,0){\vector(-1,1){15}}

\put(31,-5){$c''$}
\put(72,-5){$c'$}

\put(54,17){$b$}
\put(54,-20){$a$}
\end{picture}
\end{center}
Then Lemma \ref{the other lemma singularity category of cluster-tilted algebra of D} implies that
$$\underline{\Gproj}(KQ'''/I''')\simeq \underline{\mod}S_3.$$
Thus $$\underline{\Gproj}(\Lambda)\simeq \coprod_{t(Q')+t(Q'')+1} \underline{\mod}(S_3),$$

(c) If $\Lambda$ is of {\bf type III}, then the proof is similar to the {\bf type II}, we omit it here.

(d) If $\Lambda$ is of {\bf type IV}, then Theorem \ref{theorem rooted quiver to type A} yields
$$\underline{\Gproj}(\Lambda)\simeq\coprod_{t(Q_1)+t(Q_2)+\dots+t(Q_r)} \underline{\Gproj}(S_3)\coprod\underline{\Gproj}(KQ'/I'),$$
where $KQ'/I'$ is the cluster-tilted algebra of type $D$ with the quiver $Q'$ as the following diagram shows.
\setlength{\unitlength}{0.7mm}
\begin{center}
\begin{picture}(60,70)(0,-20)
\put(60,0){\circle*{2}}
\put(60,0){\vector(0,1){19}}
\put(62,-3){${}_1$}
\put(55,0){${}_{i_1}$}

\put(60,20){\circle*{2}}
\put(60,20){\vector(1,-1){9}}
\put(60,20){\vector(-1,1){14}}
\put(62,23){${}_{2}$}
\put(50,17){${}_{i_1+1}$}

\put(70,10){\circle*{2}}
\put(70,10){\vector(-1,-1){9}}
\put(72,11){${}_{c_1}$}

\put(45,35){\circle*{2}}
\put(45,35){\vector(-1,0){19}}
\put(46,37){${}_{3}$}
\put(43,31){${}_{i_2}$}

\put(25,35){\circle*{2}}
\put(25,35){\vector(1,1){9}}
\put(25,35){\vector(-1,-1){14}}
\put(22,37){${}_{4}$}
\put(23,31){${}_{i_2+1}$}
\put(23,27){${}_{\shortparallel}$}
\put(23,23){${}_{i_3}$}

\put(35,45){\circle*{2}}
\put(35,45){\vector(1,-1){9}}
\put(34,48){${}_{c_2}$}

\put(10,20){\circle*{2}}
\put(10,20){\vector(0,-1){19}}
\put(10,20){\vector(0,1){14}}
\put(5,20){${}_{5}$}
\put(12,18){${}_{i_3+1}$}

\put(10,35){\circle*{2}}
\put(10,35){\vector(1,0){14}}
\put(5,37){${}_{c_3}$}

\put(10,0){\circle*{2}}
\put(5,0){${}_{6}$}

\put(45,-15){\circle*{2}}
\put(45,-15){\vector(1,1){14}}
\put(47,-16){${}_{m}$}
\qbezier[25](10,0)(20,-25)(45,-15)

\end{picture}
\end{center}

Proposition \ref{proposition to type Q(m,{i_1,...,i_r})} shows that $\underline{\Gproj}(KQ'/I')\simeq \underline{\mod}S_{m+d_{Q}}$ since $d_{Q'}=d_Q$. Therefore $$\underline{\Gproj}(\Lambda)\simeq \coprod_{t(Q_1)+t(Q_2)+\dots+t(Q_r)} \underline{\Gproj}(S_3)\coprod\underline{\Gproj}(S_{m+d_Q}).$$

\end{proof}

\section{Cluster-tilted algebras of type E}

Let $A$ be a finite dimensional algebra over a field $K$ and $P_1,\dots,P_n$ be a complete collection of nonisomorphic indecomposable projective $A$-modules. The \emph{Cartan matrix} of $A$ is then the $n\times n$ matrix $C_A$ defined by $(C_A)_{ij}=\dim_K \Hom_A(P_j,P_i)$.

Assume that $C_A$ is invertible over $\Q$, which is satisfied by cluster-tilted algebras of type $E$ \cite{BHL1}. Let $S_A=C_AC_A^{-T}$(here $C_A^{-T}$ denotes the inverse of the transpose of $C_A$), known in the theory of non-symmetric bilinear forms as the \emph{asymmetry} of $C_A$.
\begin{remark}\cite{BHL1}
The matrix $S_A=C_AC_A^{-T}$ is related to the Coxeter transformation which has been widely studied in the case when $A$ has finite global dimension. When $A$ is Gorenstein, the matrix $S_A$ (if it makes sense) is integral since the injective modules have finite projective resolutions. By a result of Keller and Reiten \cite{KR}, this is the case for the cluster-tilted algebras.
\end{remark}
\begin{definition}\cite{BHL1}
Let $A$ be a finite dimensional algebra over a field $K$ and $C_A$ its Cartan matrix. If $C_A$ is invertible over $\Q$, then we associated a polynomial to $A$ as the product of the determinant of the Cartan matrix and the characteristic polynomial of its asymmetry matrix $S_A$, which is called the associated polynomial of $A$.
\end{definition}

In \cite{BHL1}, Bastian, Holm and Ladkani obtained a complete derived equivalence classification for the cluster-tilted algebras of Dynkin type $E$.

\begin{theorem}\cite{BHL1}
Two cluster-tilted algebras of Dynkin type $E$ are derived equivalent if and only if their Cartan matrices represent equivalent bilinear forms over $\Z$. This in turn happens if and only if they have the same associated polynomials.
\end{theorem}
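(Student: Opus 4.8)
The plan is to prove the two stated equivalences by closing a single cycle of implications. Write $(P)$ for ``derived equivalent'', $(Q)$ for ``the Cartan matrices are congruent over $\Z$, i.e.\ $C_B=P^{T}C_AP$ for some $P\in GL_n(\Z)$'', and $(R)$ for ``equal associated polynomials''. I would prove $(P)\Rightarrow(Q)\Rightarrow(R)\Rightarrow(P)$; the first two implications are formal, and essentially the whole weight of the theorem sits in the last one.

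For $(P)\Rightarrow(Q)$: a derived equivalence restricts to a triangle equivalence of the perfect subcategories $K^b(\proj A)\simeq K^b(\proj B)$, hence an isomorphism $K_0(K^b(\proj A))\cong K_0(K^b(\proj B))$ that is an isometry for the Euler pairing $\langle X,Y\rangle=\sum_i(-1)^i\dim_K\Hom(X,Y[i])$. Because projective modules have no higher self-extensions, on the $\Z$-basis given by the indecomposable projectives this pairing is recorded exactly by the Cartan matrix $C_A$; Gorensteinness of the cluster-tilted algebras \cite{KR} moreover guarantees that $C_A$ is invertible over $\Q$, so that $S_A$ is defined. Writing the transition matrix between the two projective bases as $P\in GL_n(\Z)$ then yields $C_B=P^{T}C_AP$, which is $(Q)$.

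For $(Q)\Rightarrow(R)$: this is linear algebra over $\Z$. From $C_B=P^{T}C_AP$ and $\det P=\pm1$ we get $\det C_B=\det C_A$, and a direct computation gives $C_B^{-T}=P^{-1}C_A^{-T}P^{-T}$, whence
$$S_B=C_BC_B^{-T}=(P^{T}C_AP)(P^{-1}C_A^{-T}P^{-T})=P^{T}S_A(P^{T})^{-1}.$$
Thus $S_A$ and $S_B$ are conjugate and share a characteristic polynomial; since the associated polynomial is the product of $\det C_A$ with the characteristic polynomial of $S_A$, it is a $\Z$-congruence invariant, giving $(R)$.

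The hard direction is $(R)\Rightarrow(P)$, and it cannot be carried out abstractly: for general integral matrices the determinant together with the characteristic polynomial of the asymmetry does \emph{not} determine the $\Z$-congruence class, since integral equivalence of non-symmetric bilinear forms is genuinely subtle. The essential input is therefore the finiteness of the mutation classes of $E_6,E_7,E_8$: up to isomorphism there are only finitely many cluster-tilted algebras of each type, their quivers being obtained from the Dynkin quiver by finitely many mutations. The plan is to enumerate all of them, compute the associated polynomial of each, and for every pair sharing this invariant exhibit an explicit chain of derived equivalences---for instance reducing each algebra to a standard representative of its derived class by the good mutations of \cite{BHL2} (which realize derived equivalences by Proposition \ref{proposition definition good mutation }) together with tilting-complex arguments as in \cite{Ric}. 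The genuine obstacle is the combinatorial bookkeeping: the number of algebras, especially for $E_8$, is large enough that verifying ``equal invariant implies a common standard form'' is carried out with computer assistance rather than by hand.
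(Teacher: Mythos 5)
This statement is quoted from \cite{BHL1}; the paper you are reading offers no proof of it at all, so there is no internal argument to compare against --- the only meaningful comparison is with the strategy of \cite{BHL1} itself, and your outline does reflect that strategy faithfully. Your two formal implications are correct and complete as written: a derived equivalence restricts to $K^b(\proj A)\simeq K^b(\proj B)$ and hence induces an isometry of the Euler forms on the Grothendieck groups, which in the bases of indecomposable projectives are recorded by the Cartan matrices (no higher self-extensions between projectives), giving $C_B=P^{T}C_AP$ with $P\in \mathrm{GL}_n(\Z)$; and the computation $S_B=P^{T}S_A(P^{T})^{-1}$ together with $\det P=\pm1$ shows the associated polynomial is a congruence invariant. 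You are also right that the invertibility of $C_A$ over $\Q$, needed even to define $S_A$, comes from the Gorenstein property of \cite{KR}.

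The gap is that for the direction that carries the entire content of the theorem --- equal associated polynomials imply derived equivalence --- you have written a plan, not a proof. You correctly observe that no abstract argument is available, since for general integral non-symmetric forms the determinant and the characteristic polynomial of the asymmetry do not determine the $\Z$-congruence class, and you correctly identify the actual mechanism of \cite{BHL1}: finiteness of the mutation classes of $E_6$, $E_7$, $E_8$, enumeration of all cluster-tilted algebras up to isomorphism, computation of the invariant for each, and explicit chains of derived equivalences (good mutations in the sense of Proposition \ref{proposition definition good mutation }, plus further tilting complexes as in \cite{Ric}) connecting all algebras sharing an invariant to a common representative. But none of that enumeration or verification is carried out, and it cannot be compressed: there are hundreds of algebras in type $E_8$ alone, and the whole point of the theorem is that the case-by-case check closes. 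So as a standalone argument the proposal establishes only that the associated polynomial is a derived invariant (one half of each claimed equivalence), and defers the converse to exactly the computation that constitutes \cite{BHL1}. Within the context of the present paper, where the theorem is imported as a black box, this is an honest and accurate account; as a proof, it is incomplete precisely where the theorem is nontrivial.
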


According to these, in order to obtain a complete singularity equivalence classification for the cluster-tilted algebras of Dynkin type $E$, we only need compute the singularity category of a representative in each derived equivalence class, since derived equivalences implies singularity equivalences.

Because the derived equivalent classes parameterized by the associated polynomials \cite{BHL1}, we can get the singularity equivalence classes of Dynkin $E$ in the following tables. For each associated polynomial, we provide a representative, whose arrows which are displayed without orientation can be oriented arbitrarily, without changing the singularity equivalence
class.
\[
\setlength{\tabcolsep}{2pt}
\renewcommand{\arraystretch}{1.2}
\begin{tabular}{|l|l|}
\hline
\multicolumn{2}{|c|}{Singularity equivalence classes of Dynkin type $E_6$}\\
\hline
\multicolumn{1}{|c|}{The quiver and associated polynomial of $A$}  &\multicolumn{1}{|c|}{$\underline{\Gproj}(A)$}\\
\hline
\multicolumn{1}{|c|}{\setlength{\unitlength}{0.7mm}
                     \begin{picture}(0,15)
                     \put(-20,0){\circle*{1.3}}
                     \put(-20,0){\line(1,0){10}}
                     \put(-10,0){\circle*{1.3}}
                     \put(-10,0){\line(1,0){10}}
                     \put(0,0){\circle*{1.3}}
                     \put(0,0){\line(1,0){10}}
                     \put(10,0){\circle*{1.3}}
                     \put(10,0){\line(1,0){10}}
                     \put(20,0){\circle*{1.3}}
                     \put(0,0){\line(0,1){10}}
                     \put(0,10){\circle*{1.3}}
                     \end{picture} }  &\multicolumn{1}{|c|}{$0$}\\
\multicolumn{1}{|c|}{$x^6-x^5+x^3-x+1$} &\\
\hline
\multicolumn{1}{|c|}{\setlength{\unitlength}{0.7mm}
                     \begin{picture}(0,15)
                     \put(-20,0){\circle*{1.3}}
                     \put(-20,0){\vector(1,0){9}}
                     \put(-10,0){\circle*{1.3}}
                     \put(-10,0){\line(1,0){10}}
                     \put(0,0){\circle*{1.3}}
                     \put(0,0){\line(1,0){10}}
                     \put(10,0){\circle*{1.3}}
                     \put(-10,0){\line(0,1){10}}
                     \put(-10,10){\circle*{1.3}}
                     \put(-10,0){\vector(-1,1){9}}
                     \put(-20,10){\circle*{1.3}}
                     \put(-20,10){\vector(0,-1){9}}
                     \end{picture}     }  &\multicolumn{1}{|c|}{$\underline{\mod}(S_3)$}\\
                     \multicolumn{1}{|c|}{$2(x^6-x^4+2x^3-x^2+1)$}  &\\
\hline
\end{tabular}
\]
\[
\setlength{\tabcolsep}{2pt}
\renewcommand{\arraystretch}{1.2}
\begin{tabular}{|l|l|}
\hline
\multicolumn{1}{|c|}{ \setlength{\unitlength}{0.7mm}
                     \begin{picture}(0,25)
                     \put(-20,10){\circle*{1.3}}
                     \put(-20,10){\line(1,0){10}}
                     \put(-10,10){\circle*{1.3}}
                     \put(-10,10){\vector(1,-1){9}}
                     \put(0,0){\circle*{1.3}}
                     \put(0,0){\vector(0,1){20}}
                     \put(0,20){\circle*{1.3}}
                     \put(0,20){\vector(-1,-1){9}}
                     \put(0,20){\vector(1,-1){9}}
                     \put(10,10){\circle*{1.3}}
                     \put(10,10){\vector(-1,-1){9}}
                     \put(10,10){\line(1,0){10}}
                     \put(20,10){\circle*{1.3}}
                     \end{picture}    }  &\multicolumn{1}{|c|}{$\underline{\mod}(S_3)$}\\
\multicolumn{1}{|c|}{$2(x^6-2x^4+4x^3-2x^2+1)$}  &\\
\hline

\multicolumn{1}{|c|}{ \setlength{\unitlength}{0.7mm}
                     \begin{picture}(0,25)
                     \put(-20,10){\circle*{1.3}}
                     \put(-20,10){\vector(1,-1){9}}
                     \put(-10,0){\circle*{1.3}}
                     \put(-10,0){\vector(1,1){9}}
                     \put(0,10){\circle*{1.3}}
                     \put(0,10){\vector(-1,1){9}}
                     \put(-10,20){\circle*{1.3}}
                     \put(-10,20){\vector(-1,-1){9}}
                     \put(-10,20){\line(1,0){10}}
                     \put(0,20){\circle*{1.3}}
                     \put(0,10){\line(1,0){10}}
                     \put(10,10){\circle*{1.3}}
                     \end{picture}    }  &\multicolumn{1}{|c|}{$\underline{\mod}(S_4)$}\\
\multicolumn{1}{|c|}{$3(x^6+x^3+1)$}  &\\
\hline
\multicolumn{1}{|c|}{ \setlength{\unitlength}{0.7mm}
                     \begin{picture}(0,25)
                     \put(-20,15){\circle*{1.3}}
                     \put(-20,15){\vector(0,-1){9}}
                     \put(-20,5){\circle*{1.3}}
                     \put(-20,5){\vector(2,-1){9}}
                     \put(-10,0){\circle*{1.3}}
                     \put(-10,0){\vector(1,1){9}}
                     \put(0,10){\circle*{1.3}}
                     \put(0,10){\vector(-1,1){9}}
                     \put(-10,20){\vector(-2,-1){9}}
                     \put(-10,20){\circle*{1.3}}
                     \put(0,10){\line(1,0){10}}
                     \put(10,10){\circle*{1.3}}
                     \end{picture}  }  &\multicolumn{1}{|c|}{$\underline{\mod}(S_5)$}\\
\multicolumn{1}{|c|}{$4(x^6+x^4+x^2+1)$}  &\\
\hline
\multicolumn{1}{|c|}{\setlength{\unitlength}{0.7mm}
                     \begin{picture}(0,15)
                     \put(-20,0){\circle*{1.3}}
                     \put(-20,0){\vector(1,0){9}}
                     \put(-10,0){\circle*{1.3}}
                     \put(-10,0){\vector(1,0){9}}
                     \put(0,0){\circle*{1.3}}
                     \put(0,0){\vector(0,1){9}}
                     \put(0,10){\circle*{1.3}}
                     \put(0,10){\vector(-1,-1){9}}
                     \put(-10,10){\circle*{1.3}}
                     \put(-10,0){\vector(-1,1){9}}
                     \put(-20,10){\circle*{1.3}}
                     \put(-20,10){\vector(0,-1){9}}
                      \put(-10,0){\line(0,1){10}}
                     \end{picture}   }  &\multicolumn{1}{|c|}{$\underline{\mod}(S_3) \coprod \underline{\mod}(S_3)$}\\
\multicolumn{1}{|c|}{$4(x^6+x^5-x^4+2x^3-x^2+x+1)$}  &\\
\hline
\end{tabular}
\]

\[\setlength{\tabcolsep}{2pt}
\renewcommand{\arraystretch}{1.2}
\begin{tabular}{|l|l|}
\hline
\multicolumn{2}{|c|}{Singularity equivalence classes of Dynkin type $E_7$}\\
\hline
\multicolumn{1}{|c|}{The quiver and associated polynomial of $A$}  &\multicolumn{1}{|c|}{$\underline{\Gproj}(A)$}\\
\hline
\multicolumn{1}{|c|}{ \setlength{\unitlength}{0.7mm}
                     \begin{picture}(0,15)
                     \put(-20,0){\circle*{1.3}}
                     \put(-20,0){\line(1,0){10}}
                     \put(-10,0){\circle*{1.3}}
                     \put(-10,0){\line(1,0){10}}
                     \put(0,0){\circle*{1.3}}
                     \put(0,0){\line(1,0){10}}
                     \put(10,0){\circle*{1.3}}
                     \put(10,0){\line(1,0){10}}
                     \put(20,0){\circle*{1.3}}
                     \put(0,0){\line(0,1){10}}
                     \put(0,10){\circle*{1.3}}
                     \put(20,0){\line(1,0){10}}
                     \put(30,0){\circle*{1.3}}
                     \end{picture} }  &\multicolumn{1}{|c|}{$0$}\\
\multicolumn{1}{|c|}{$x^7-x^6+x^4-x^3+x-1$}  &\\
\hline
\multicolumn{1}{|c|}{ \setlength{\unitlength}{0.7mm}
                     \begin{picture}(0,15)
                     \put(-20,0){\circle*{1.3}}
                     \put(-20,0){\vector(1,0){9}}
                     \put(-10,0){\circle*{1.3}}
                     \put(-10,0){\line(1,0){10}}
                     \put(0,0){\circle*{1.3}}
                     \put(0,0){\line(1,0){10}}
                     \put(10,0){\circle*{1.3}}
                     \put(-10,0){\line(0,1){10}}
                     \put(-10,10){\circle*{1.3}}
                     \put(-10,0){\vector(-1,1){9}}
                     \put(-20,10){\circle*{1.3}}
                     \put(-20,10){\vector(0,-1){9}}
                     \put(10,0){\line(1,0){10}}
                     \put(20,0){\circle*{1.3}}
                     \end{picture}     }  &\multicolumn{1}{|c|}{}\\
\multicolumn{1}{|c|}{$2(x^7-x^5+2x^4-2x^3+x^2-1)$}  &\\
\multicolumn{1}{|c|}{\setlength{\unitlength}{0.7mm}
                     \begin{picture}(0,15)
                     \put(-20,0){\circle*{1.3}}
                     \put(-20,0){\vector(1,0){9}}
                     \put(-10,0){\circle*{1.3}}
                     \put(-10,0){\line(1,0){10}}
                     \put(0,0){\circle*{1.3}}
                     \put(0,0){\line(1,0){10}}
                     \put(10,0){\circle*{1.3}}
                     \put(0,0){\line(0,1){10}}
                     \put(0,10){\circle*{1.3}}
                     \put(-10,0){\vector(-1,1){9}}
                     \put(-20,10){\circle*{1.3}}
                     \put(-20,10){\vector(0,-1){9}}
                     \put(10,0){\line(1,0){10}}
                     \put(20,0){\circle*{1.3}}
                     \end{picture}    }  &\multicolumn{1}{|c|}{$\underline{\mod}(S_3)$}\\
\multicolumn{1}{|c|}{$2(x^7-x^5+x^4-x^3+x^2-1)$}  & \\
\multicolumn{1}{|c|}{   \setlength{\unitlength}{0.7mm}
                     \begin{picture}(0,25)
                     \put(-20,10){\circle*{1.3}}
                     \put(-20,10){\line(1,0){10}}
                     \put(-10,10){\circle*{1.3}}
                     \put(-10,10){\vector(1,-1){9}}
                     \put(0,0){\circle*{1.3}}
                     \put(0,0){\vector(0,1){20}}
                     \put(0,20){\circle*{1.3}}
                     \put(0,20){\vector(-1,-1){9}}
                     \put(0,20){\vector(1,-1){9}}
                     \put(10,10){\circle*{1.3}}
                     \put(10,10){\vector(-1,-1){9}}
                     \put(10,10){\line(1,0){10}}
                     \put(20,10){\circle*{1.3}}
                     \put(20,10){\line(1,0){10}}
                     \put(30,10){\circle*{1.3}}
                     \end{picture}   }  &\multicolumn{1}{|c|}{}\\
\multicolumn{1}{|c|}{$2(x^7-2x^5+4x^4-4x^3+2x^2-1)$}  &\multicolumn{1}{|c|}{}\\
\hline
\multicolumn{1}{|c|}{  \setlength{\unitlength}{0.7mm}
                     \begin{picture}(0,25)
                     \put(-20,10){\circle*{1.3}}
                     \put(-20,10){\vector(1,-1){9}}
                     \put(-10,0){\circle*{1.3}}
                     \put(-10,0){\vector(1,1){9}}
                     \put(0,10){\circle*{1.3}}
                     \put(0,10){\vector(-1,1){9}}
                     \put(-10,20){\circle*{1.3}}
                     \put(-10,20){\vector(-1,-1){9}}
                     \put(-10,20){\line(1,0){10}}
                     \put(0,20){\circle*{1.3}}
                     \put(0,10){\line(1,0){10}}
                     \put(10,10){\circle*{1.3}}
                     \put(10,10){\line(1,0){10}}
                     \put(20,10){\circle*{1.3}}
                     \end{picture}   }  &\multicolumn{1}{|c|}{$\underline{\mod}(S_4)$}\\
\multicolumn{1}{|c|}{$3(x^7-1)$}  &\multicolumn{1}{|c|}{}\\
\hline
\end{tabular}
\]

\[
\setlength{\tabcolsep}{2pt}
\renewcommand{\arraystretch}{1.2}
\begin{tabular}{|l|l|}
\hline

\multicolumn{1}{|c|}{ \setlength{\unitlength}{0.7mm}
                     \begin{picture}(0,25)
                     \put(-20,15){\circle*{1.3}}
                     \put(-20,15){\vector(0,-1){9}}
                     \put(-20,5){\circle*{1.3}}
                     \put(-20,5){\vector(2,-1){9}}
                     \put(-10,0){\circle*{1.3}}
                     \put(-10,0){\vector(1,1){9}}
                     \put(0,10){\circle*{1.3}}
                     \put(0,10){\vector(-1,1){9}}
                     \put(-10,20){\vector(-2,-1){9}}
                     \put(-10,20){\circle*{1.3}}
                     \put(0,10){\line(1,0){10}}
                     \put(10,10){\circle*{1.3}}
                     \put(10,10){\line(1,0){10}}
                     \put(20,10){\circle*{1.3}}
                     \end{picture} }  &\\
\multicolumn{1}{|c|}{$4(x^7+x^5-x^4+x^3-x^2-1)$}  &\multicolumn{1}{|c|}{$\underline{\mod}(S_5)$}\\

\multicolumn{1}{|c|}{\setlength{\unitlength}{0.7mm}
                     \begin{picture}(0,25)
                     \put(-20,10){\circle*{1.3}}
                     \put(-20,10){\vector(2,1){9}}
                     \put(-10,15){\circle*{1.3}}
                     \put(-10,15){\vector(0,-1){9}}
                     \put(-10,5){\circle*{1.3}}
                     \put(-10,5){\vector(-2,1){9}}
                     \put(-10,5){\vector(2,-1){9}}
                     \put(0,0){\circle*{1.3}}
                     \put(0,0){\vector(1,1){9}}
                     \put(10,10){\circle*{1.3}}
                     \put(10,10){\vector(-1,1){9}}
                     \put(0,20){\vector(-2,-1){9}}
                     \put(0,20){\circle*{1.3}}
                     \put(0,20){\line(1,0){10}}
                     \put(10,20){\circle*{1.3}}
                     \end{picture}  }  &\\
\multicolumn{1}{|c|}{$4(x^7+x^5-2x^4+2x^3-x^2-1)$}  &\multicolumn{1}{|c|}{}\\
\hline
\multicolumn{1}{|c|}{ \setlength{\unitlength}{0.7mm}
                     \begin{picture}(0,15)
                     \put(-20,0){\circle*{1.3}}
                     \put(-20,0){\vector(1,0){9}}
                     \put(-10,0){\circle*{1.3}}
                     \put(-10,0){\line(1,0){10}}
                     \put(10,0){\circle*{1.3}}
                     \put(10,0){\vector(0,1){9}}
                     \put(10,10){\circle*{1.3}}
                     \put(10,10){\vector(-1,-1){9}}
                     \put(-10,10){\circle*{1.3}}
                     \put(-10,0){\vector(-1,1){9}}
                     \put(-20,10){\circle*{1.3}}
                     \put(-20,10){\vector(0,-1){9}}
                      \put(-10,0){\line(0,1){10}}
                      \put(0,0){\circle*{1.3}}
                      \put(0,0){\vector(1,0){9}}
                     \end{picture}  }  &\multicolumn{1}{|c|}{}\\
\multicolumn{1}{|c|}{$4(x^7+x^6-x^5+x^4-x^3+x^2-x-1)$}  &\multicolumn{1}{|c|}{}\\
\multicolumn{1}{|c|}{ \setlength{\unitlength}{0.7mm}
                     \begin{picture}(0,25)
                     \put(-20,10){\circle*{1.3}}
                     \put(-20,10){\vector(1,1){9}}
                     \put(-20,10){\vector(1,-1){9}}
                     \put(-10,20){\circle*{1.3}}
                     \put(-10,0){\circle*{1.3}}
                     \put(-10,0){\vector(1,1){9}}
                     \put(0,10){\circle*{1.3}}
                     \put(-10,20){\vector(1,-1){9}}
                     \put(-10,20){\line(-1,0){10}}
                     \put(-20,20){\circle*{1.3}}
                     \put(0,10){\vector(-1,0){19}}
                     \put(0,10){\vector(1,-1){9}}
                     \put(10,0){\circle*{1.3}}
                     \put(10,0){\vector(0,1){19}}
                     \put(10,20){\circle*{1.3}}
                     \put(10,20){\vector(-1,-1){9}}
                     \end{picture}  }  &\multicolumn{1}{|c|}{}\\
\multicolumn{1}{|c|}{$4(x^7+x^6-x^5-x^4+x^3+x^2-x-1)$}  &\multicolumn{1}{|c|}{$\underline{\mod}(S_3) \coprod \underline{\mod}(S_3)$}\\
\multicolumn{1}{|c|}{ \setlength{\unitlength}{0.7mm}
                     \begin{picture}(0,25)
                     \put(-20,10){\circle*{1.3}}
                     \put(-20,10){\line(1,0){10}}
                     \put(-10,10){\circle*{1.3}}
                     \put(-10,10){\vector(1,-1){9}}
                     \put(0,0){\circle*{1.3}}
                     \put(0,0){\vector(0,1){20}}
                     \put(0,20){\circle*{1.3}}
                     \put(0,20){\vector(-1,-1){9}}
                     \put(0,20){\vector(1,-1){9}}
                     \put(10,10){\circle*{1.3}}
                     \put(10,10){\vector(-1,-1){9}}
                     \put(10,10){\vector(1,-1){9}}
                     \put(20,20){\circle*{1.3}}
                     \put(20,20){\vector(-1,-1){9}}
                     \put(20,0){\circle*{1.3}}
                     \put(20,0){\vector(0,1){19}}
                     \end{picture}  }  &\multicolumn{1}{|c|}{}\\
\multicolumn{1}{|c|}{$4(x^7+x^6-2x^5+2x^4-2x^3+2x^2-x-1)$}  &\multicolumn{1}{|c|}{}\\
\hline
\multicolumn{1}{|c|}{\setlength{\unitlength}{0.7mm}
                     \begin{picture}(0,25)
                     \put(-20,10){\circle*{1.3}}
                     \put(-20,10){\vector(1,-1){9}}
                     \put(-10,0){\circle*{1.3}}
                     \put(-10,0){\vector(1,0){9}}
                     \put(0,0){\circle*{1.3}}
                     \put(0,0){\vector(1,1){9}}
                     \put(10,10){\circle*{1.3}}
                     \put(10,10){\vector(-1,1){9}}
                     \put(0,20){\circle*{1.3}}
                     \put(0,20){\vector(-1,0){9}}
                     \put(-10,20){\circle*{1.3}}
                     \put(-10,20){\vector(-1,-1){9}}
                     \put(10,10){\line(1,0){10}}
                     \put(20,10){\circle*{1.3}}
                     \end{picture}  }  &\multicolumn{1}{|c|}{$\underline{\mod}(S_6)$}\\
\multicolumn{1}{|c|}{$5(x^7+x^5-x^4+x^3-x^2-1)$}  & \\
\hline
\multicolumn{1}{|c|}{  \setlength{\unitlength}{0.5mm}
                       \begin{picture}(20,50)(0,-20)
                       \put(10,-10){\circle*{2}}
                       \put(10,-10){\vector(0,1){19}}
                       \put(10,10){\circle*{2}}
                       \put(10,10){\vector(1,0){19}}
                       \put(30,10){\circle*{2}}
                       \put(30,10){\vector(0,-1){19}}
                       \put(30,-10){\circle*{2}}
                       \put(30,-10){\vector(-1,0){19}}

                      \put(20,20){\circle*{2}}
                      \put(20,20){\vector(-1,-1){9.7}}
                      \put(30,10){\vector(-1,1){9.7}}
                      \put(20,20){\vector(1,0){19}}
                      \put(40,20){\circle*{2}}
                      \put(40,20){\vector(-1,-1){9.7}}
                      \put(0,20){\circle*{2}}
                      \put(10,10){\vector(-1,1){9.7}}
                      \put(0,20){\vector(1,0){19}}

                      \put(7,-15){1}
                      \put(31,-15){4}
                      \put(7,6){2}
                      \put(31,6){5}
                      \put(-3,21){3}
                      \put(18,21){6}
                      \put(40,21){7}
                      \put(15,-18){$A_{104}$}
                       \end{picture}}  &\multicolumn{1}{|c|}{$\underline{\mod}(S_7)$ }\\
\multicolumn{1}{|c|}{$6(x^7+x^5-x^2-1)$}  &\\
\hline
\multicolumn{1}{|c|}{    \setlength{\unitlength}{0.7mm}
                     \begin{picture}(0,25)
                     \put(-20,10){\circle*{1.3}}
                     \put(-20,10){\vector(1,-1){9}}
                     \put(-10,0){\circle*{1.3}}
                     \put(-10,0){\vector(1,1){9}}
                     \put(0,10){\circle*{1.3}}
                     \put(0,10){\vector(-1,1){9}}
                     \put(-10,20){\circle*{1.3}}
                     \put(-10,20){\vector(-1,-1){9}}
                     \put(-10,20){\line(-1,0){10}}
                     \put(-20,20){\circle*{1.3}}
                     \put(0,10){\vector(1,-1){9}}
                     \put(10,0){\circle*{1.3}}
                     \put(10,0){\vector(0,1){19}}
                     \put(10,20){\circle*{1.3}}
                     \put(10,20){\vector(-1,-1){9}}
                     \end{picture}   }  &\multicolumn{1}{|c|}{$\underline{\mod}(S_3)\coprod \underline{\mod}(S_4)$ }\\
\multicolumn{1}{|c|}{$6(x^7+x^6-x^4+x^3-x-1)$}  &\\
\hline
\multicolumn{1}{|c|}{  \setlength{\unitlength}{0.7mm}
                     \begin{picture}(0,25)
                     \put(-20,15){\circle*{1.3}}
                     \put(-20,15){\vector(0,-1){9}}
                     \put(-20,5){\circle*{1.3}}
                     \put(-20,5){\vector(2,-1){9}}
                     \put(-10,0){\circle*{1.3}}
                     \put(-10,0){\vector(1,1){9}}
                     \put(0,10){\circle*{1.3}}
                     \put(0,10){\vector(-1,1){9}}
                     \put(-10,20){\vector(-2,-1){9}}
                     \put(-10,20){\circle*{1.3}}
                     \put(0,10){\vector(1,-1){9}}
                     \put(10,0){\circle*{1.3}}
                     \put(10,0){\vector(0,1){19}}
                     \put(10,20){\circle*{1.3}}
                     \put(10,20){\vector(-1,-1){9}}
                     \end{picture} }  &\multicolumn{1}{|c|}{$\underline{\mod}(S_3) \coprod \underline{\mod}(S_5)$ }\\
\multicolumn{1}{|c|}{$8(x^7+x^6+x^5-x^4+x^3-x^2-x-1)$}  &\\
\hline
\end{tabular}
\]

\[\setlength{\tabcolsep}{2pt}
\renewcommand{\arraystretch}{1.2}
\begin{tabular}{|l|l|}
\hline
\multicolumn{2}{|c|}{Singularity equivalence classes of Dynkin type $E_8$}\\
\hline
\multicolumn{1}{|c|}{The quiver and associated polynomial of $A$}  &\multicolumn{1}{|c|}{$\underline{\Gproj}(A)$}\\
\hline
\multicolumn{1}{|c|}{ \setlength{\unitlength}{0.7mm}
                     \begin{picture}(10,15)
                     \put(-20,0){\circle*{1.3}}
                     \put(-20,0){\line(1,0){10}}
                     \put(-10,0){\circle*{1.3}}
                     \put(-10,0){\line(1,0){10}}
                     \put(0,0){\circle*{1.3}}
                     \put(0,0){\line(1,0){10}}
                     \put(10,0){\circle*{1.3}}
                     \put(10,0){\line(1,0){10}}
                     \put(20,0){\circle*{1.3}}
                     \put(0,0){\line(0,1){10}}
                     \put(0,10){\circle*{1.3}}
                     \put(20,0){\line(1,0){10}}
                     \put(30,0){\circle*{1.3}}
                     \put(30,0){\line(1,0){10}}
                     \put(40,0){\circle*{1.3}}
                     \end{picture}}  &\multicolumn{1}{|c|}{$0$}\\
\multicolumn{1}{|c|}{$x^8-x^7+x^5-x^4+x^3-x+1$}  &  \\
\hline
\multicolumn{1}{|c|}{    \setlength{\unitlength}{0.7mm}
                     \begin{picture}(0,15)
                     \put(-20,0){\circle*{1.3}}
                     \put(-20,0){\vector(1,0){9}}
                     \put(-10,0){\circle*{1.3}}
                     \put(-10,0){\line(1,0){10}}
                     \put(0,0){\circle*{1.3}}
                     \put(0,0){\line(1,0){10}}
                     \put(10,0){\circle*{1.3}}
                     \put(-10,0){\line(0,1){10}}
                     \put(-10,10){\circle*{1.3}}
                     \put(-10,0){\vector(-1,1){9}}
                     \put(-20,10){\circle*{1.3}}
                     \put(-20,10){\vector(0,-1){9}}
                     \put(10,0){\line(1,0){10}}
                     \put(20,0){\circle*{1.3}}
                     \put(20,0){\line(1,0){10}}
                     \put(30,0){\circle*{1.3}}
                     \end{picture} }  &\multicolumn{1}{|c|}{}\\
\multicolumn{1}{|c|}{$2(x^8-x^6+2x^5-2x^4+2x^3-x^2+1)$}  &\multicolumn{1}{|c|}{}\\
\multicolumn{1}{|c|}{\setlength{\unitlength}{0.7mm}
                     \begin{picture}(0,15)
                     \put(-20,0){\circle*{1.3}}
                     \put(-20,0){\vector(1,0){9}}
                     \put(-10,0){\circle*{1.3}}
                     \put(-10,0){\line(1,0){10}}
                     \put(0,0){\circle*{1.3}}
                     \put(0,0){\line(1,0){10}}
                     \put(10,0){\circle*{1.3}}
                     \put(10,0){\line(0,1){10}}
                     \put(10,10){\circle*{1.3}}
                     \put(-10,0){\vector(-1,1){9}}
                     \put(-20,10){\circle*{1.3}}
                     \put(-20,10){\vector(0,-1){9}}
                     \put(10,0){\line(1,0){10}}
                     \put(20,0){\circle*{1.3}}
                     \put(20,0){\line(1,0){10}}
                     \put(30,0){\circle*{1.3}}
                     \end{picture}    }  &\multicolumn{1}{|c|}{$\underline{\mod}(S_3)$}\\
\multicolumn{1}{|c|}{$2(x^8-x^6+x^5+x^3-x^2+1)$}  &\\
\multicolumn{1}{|c|}{  \setlength{\unitlength}{0.7mm}
                     \begin{picture}(0,25)
                     \put(-20,10){\circle*{1.3}}
                     \put(-20,10){\line(1,0){10}}
                     \put(-10,10){\circle*{1.3}}
                     \put(-10,10){\vector(1,-1){9}}
                     \put(0,0){\circle*{1.3}}
                     \put(0,0){\vector(0,1){20}}
                     \put(0,20){\circle*{1.3}}
                     \put(0,20){\vector(-1,-1){9}}
                     \put(0,20){\vector(1,-1){9}}
                     \put(10,10){\circle*{1.3}}
                     \put(10,10){\vector(-1,-1){9}}
                     \put(10,10){\line(1,0){10}}
                     \put(20,10){\circle*{1.3}}
                     \put(20,10){\line(1,0){10}}
                     \put(30,10){\circle*{1.3}}
                     \put(30,10){\line(1,0){10}}
                     \put(40,10){\circle*{1.3}}
                     \end{picture}  }  &\multicolumn{1}{|c|}{}\\
\multicolumn{1}{|c|}{$2(x^8-2x^6+4x^5-4x^4+4x^3-2x^2+1)$}  &\multicolumn{1}{|c|}{}\\
\hline
\multicolumn{1}{|c|}{ \setlength{\unitlength}{0.7mm}
                     \begin{picture}(0,25)
                     \put(-20,10){\circle*{1.3}}
                     \put(-20,10){\vector(1,-1){9}}
                     \put(-10,0){\circle*{1.3}}
                     \put(-10,0){\vector(1,1){9}}
                     \put(0,10){\circle*{1.3}}
                     \put(0,10){\vector(-1,1){9}}
                     \put(-10,20){\circle*{1.3}}
                     \put(-10,20){\vector(-1,-1){9}}
                     \put(-10,20){\line(-1,0){10}}
                     \put(-20,20){\circle*{1.3}}
                     \put(0,10){\line(1,0){10}}
                     \put(10,10){\circle*{1.3}}
                     \put(10,10){\line(1,0){10}}
                     \put(20,10){\circle*{1.3}}
                     \put(20,10){\line(1,0){10}}
                     \put(30,10){\circle*{1.3}}
                     \end{picture}   }  &\multicolumn{1}{|c|}{$\underline{\mod}(S_4)$}\\
\multicolumn{1}{|c|}{$3(x^8+x^4+1)$}  &  \\
\hline
\multicolumn{1}{|c|}{\setlength{\unitlength}{0.7mm}
                     \begin{picture}(0,25)
                     \put(-20,15){\circle*{1.3}}
                     \put(-20,15){\vector(0,-1){9}}
                     \put(-20,5){\circle*{1.3}}
                     \put(-20,5){\vector(2,-1){9}}
                     \put(-10,0){\circle*{1.3}}
                     \put(-10,0){\vector(1,1){9}}
                     \put(0,10){\circle*{1.3}}
                     \put(0,10){\vector(-1,1){9}}
                     \put(-10,20){\vector(-2,-1){9}}
                     \put(-10,20){\circle*{1.3}}
                     \put(0,10){\line(1,0){10}}
                     \put(10,10){\circle*{1.3}}
                     \put(10,10){\line(1,0){10}}
                     \put(20,10){\circle*{1.3}}
                     \put(20,10){\line(1,0){10}}
                     \put(30,10){\circle*{1.3}}
                     \end{picture}}  &\multicolumn{1}{|c|}{}\\
\multicolumn{1}{|c|}{$4(x^8+x^6-x^5+2x^4-x^3+x^2+1)$}  &\multicolumn{1}{|c|}{$\underline{\mod}(S_5)$}\\
\multicolumn{1}{|c|}{ \setlength{\unitlength}{0.7mm}
                     \begin{picture}(0,25)
                     \put(-20,10){\circle*{1.3}}
                     \put(-20,10){\vector(2,1){9}}
                     \put(-10,15){\circle*{1.3}}
                     \put(-10,15){\vector(0,-1){9}}
                     \put(-10,5){\circle*{1.3}}
                     \put(-10,5){\vector(-2,1){9}}
                     \put(-10,5){\vector(2,-1){9}}
                     \put(0,0){\circle*{1.3}}
                     \put(0,0){\vector(1,1){9}}
                     \put(10,10){\circle*{1.3}}
                     \put(10,10){\vector(-1,1){9}}
                     \put(0,20){\vector(-2,-1){9}}
                     \put(0,20){\circle*{1.3}}
                     \put(0,20){\line(1,0){10}}
                     \put(10,20){\circle*{1.3}}
                     \put(-30,10){\circle*{1.3}}
                     \put(-30,10){\line(1,0){10}}
                     \end{picture}    }  &\multicolumn{1}{|c|}{}\\
\multicolumn{1}{|c|}{$4(x^8+x^6-2x^5+4x^4-2x^3+x^2+1)$}  &\multicolumn{1}{|c|}{}\\
\hline
\multicolumn{1}{|c|}{ \setlength{\unitlength}{0.7mm}
                     \begin{picture}(0,15)
                     \put(-20,0){\circle*{1.3}}
                     \put(-20,0){\vector(1,0){9}}
                     \put(-10,0){\circle*{1.3}}
                     \put(-10,0){\line(1,0){10}}
                     \put(20,0){\circle*{1.3}}
                     \put(20,0){\vector(0,1){9}}
                     \put(20,10){\circle*{1.3}}
                     \put(20,10){\vector(-1,-1){9}}
                     \put(-10,10){\circle*{1.3}}
                     \put(-10,0){\vector(-1,1){9}}
                     \put(-20,10){\circle*{1.3}}
                     \put(-20,10){\vector(0,-1){9}}
                      \put(-10,0){\line(0,1){10}}
                      \put(0,0){\circle*{1.3}}
                      \put(0,0){\line(1,0){10}}
                      \put(10,0){\circle*{1.3}}
                      \put(10,0){\vector(1,0){9}}
                     \end{picture}}  &\multicolumn{1}{|c|}{}\\
\multicolumn{1}{|c|}{$4(x^8+x^7-x^6+x^5+x^3-x^2+x+1)$}  & \\
\multicolumn{1}{|c|}{\setlength{\unitlength}{0.7mm}
                     \begin{picture}(0,15)
                     \put(-20,0){\circle*{1.3}}
                     \put(-20,0){\vector(1,0){9}}
                     \put(-10,0){\circle*{1.3}}
                     \put(-10,0){\vector(1,0){9}}
                     \put(0,0){\circle*{1.3}}
                     \put(0,0){\vector(0,1){9}}
                     \put(0,10){\circle*{1.3}}
                     \put(0,10){\vector(-1,-1){9}}

                     \put(-10,0){\vector(-1,1){9}}
                     \put(-20,10){\circle*{1.3}}
                     \put(-20,10){\vector(0,-1){9}}
                      \put(0,0){\line(1,0){10}}
                      \put(10,0){\circle*{1.3}}
                       \put(10,0){\line(1,0){10}}
                      \put(20,0){\circle*{1.3}}
                      \put(0,0){\line(1,1){10}}
                      \put(10,10){\circle*{1.3}}
                     \end{picture} }  &\multicolumn{1}{|c|}{}\\
\multicolumn{1}{|c|}{$4(x^8+x^7-x^6+2x^4-x^2+x+1)$}  &\multicolumn{1}{|c|}{$\underline{\mod}(S_3) \coprod \underline{\mod}(S_3)$}\\
\multicolumn{1}{|c|}{  \setlength{\unitlength}{0.7mm}
                     \begin{picture}(0,25)
                     \put(-20,10){\circle*{1.3}}
                     \put(-20,10){\line(1,0){10}}
                     \put(-10,10){\circle*{1.3}}
                     \put(-10,10){\vector(1,-1){9}}
                     \put(0,0){\circle*{1.3}}
                     \put(0,0){\vector(0,1){20}}
                     \put(0,20){\circle*{1.3}}
                     \put(0,20){\vector(-1,-1){9}}
                     \put(0,20){\vector(1,-1){9}}
                     \put(10,10){\circle*{1.3}}
                     \put(10,10){\vector(-1,-1){9}}
                     \put(10,10){\line(1,0){10}}
                     \put(20,10){\circle*{1.3}}
                     \put(20,10){\vector(1,-1){9}}
                     \put(30,0){\circle*{1.3}}
                     \put(30,0){\vector(0,1){19}}
                     \put(30,20){\circle*{1.3}}
                     \put(30,20){\vector(-1,-1){9}}
                     \end{picture} }  &\multicolumn{1}{|c|}{}\\
\multicolumn{1}{|c|}{$4(x^8+x^7-2x^6+2x^5+2x^3-2x^2+x+1)$}  &\multicolumn{1}{|c|}{}\\
\hline
\end{tabular}
\]
\[\setlength{\tabcolsep}{2pt}
\renewcommand{\arraystretch}{1.2}
\begin{tabular}{|l|l|}
\hline
\multicolumn{1}{|c|}{ \setlength{\unitlength}{0.7mm}
                     \begin{picture}(0,25)
                     \put(-20,10){\circle*{1.3}}
                     \put(-20,10){\vector(1,-1){9}}
                     \put(-10,0){\circle*{1.3}}
                     \put(-10,0){\vector(1,0){9}}
                     \put(0,0){\circle*{1.3}}
                     \put(0,0){\vector(1,1){9}}
                     \put(10,10){\circle*{1.3}}
                     \put(10,10){\vector(-1,1){9}}
                     \put(0,20){\circle*{1.3}}
                     \put(0,20){\vector(-1,0){9}}
                     \put(-10,20){\circle*{1.3}}
                     \put(-10,20){\vector(-1,-1){9}}
                     \put(10,10){\line(1,0){10}}
                     \put(20,10){\circle*{1.3}}
                     \put(-30,10){\circle*{1.3}}
                     \put(-30,10){\line(1,0){10}}
                     \end{picture} }  &\multicolumn{1}{|c|}{$\underline{\mod}(S_6)$}\\
\multicolumn{1}{|c|}{$5(x^8+x^6+x^4+x^2+1)$}  &\\
\hline
\multicolumn{1}{|c|}{ \setlength{\unitlength}{0.7mm}
                     \begin{picture}(0,25)
                     \put(-20,15){\circle*{1.3}}
                     \put(-20,15){\vector(0,-1){9}}
                     \put(-20,5){\circle*{1.3}}
                     \put(-20,5){\vector(2,-1){9}}
                     \put(-10,0){\circle*{1.3}}
                     \put(-10,0){\vector(1,0){9}}
                     \put(0,20){\circle*{1.3}}
                     \put(0,20){\vector(-1,0){9}}
                     \put(-10,20){\vector(-2,-1){9}}
                     \put(-10,20){\circle*{1.3}}
                     \put(10,10){\line(1,0){10}}
                     \put(20,10){\circle*{1.3}}
                     \put(0,0){\circle*{1.3}}
                     \put(0,0){\vector(1,1){9}}
                     \put(10,10){\circle*{1.3}}
                     \put(10,10){\vector(-1,1){9}}
                     \put(-10,10){$A_{15}$}
                     \end{picture} }  &\multicolumn{1}{|c|}{$\underline{\mod}(S_7)$ }\\
\multicolumn{1}{|c|}{$6(x^8+x^6+x^5+x^3+x^2+1)$}  & \\
\hline
\multicolumn{1}{|c|}{ \setlength{\unitlength}{0.7mm}
                     \begin{picture}(0,25)
                     \put(-20,10){\circle*{1.3}}
                     \put(-20,10){\vector(1,-1){9}}
                     \put(-10,0){\circle*{1.3}}
                     \put(-10,0){\vector(1,1){9}}
                     \put(0,10){\circle*{1.3}}
                     \put(0,10){\vector(-1,1){9}}
                     \put(-10,20){\circle*{1.3}}
                     \put(-10,20){\vector(-1,-1){9}}
                     \put(-10,20){\line(-1,0){10}}
                     \put(-20,20){\circle*{1.3}}
                     \put(10,10){\vector(1,-1){9}}
                     \put(20,0){\circle*{1.3}}
                     \put(20,0){\vector(0,1){19}}
                     \put(20,20){\circle*{1.3}}
                     \put(20,20){\vector(-1,-1){9}}
                     \put(0,10){\line(1,0){10}}
                     \put(10,10){\circle*{1.3}}
                     \end{picture}   }  &\multicolumn{1}{|c|}{$\underline{\mod}(S_3) \coprod \underline{\mod}(S_4)$ }\\
\multicolumn{1}{|c|}{$6(x^8+x^7+2x^4+x+1)$}  & \\
\hline
\multicolumn{1}{|c|}{\setlength{\unitlength}{0.7mm}
                     \begin{picture}(0,25)
                     \put(-20,15){\circle*{1.3}}
                     \put(-20,15){\vector(0,-1){9}}
                     \put(-20,5){\circle*{1.3}}
                     \put(-20,5){\vector(2,-1){9}}
                     \put(-10,0){\circle*{1.3}}
                     \put(-10,0){\vector(1,1){9}}
                     \put(0,10){\circle*{1.3}}
                     \put(0,10){\vector(-1,1){9}}
                     \put(-10,20){\vector(-2,-1){9}}
                     \put(-10,20){\circle*{1.3}}
                     \put(10,10){\vector(1,-1){9}}
                     \put(20,0){\circle*{1.3}}
                     \put(20,0){\vector(0,1){19}}
                     \put(20,20){\circle*{1.3}}
                     \put(20,20){\vector(-1,-1){9}}
                     \put(0,10){\line(1,0){10}}
                     \put(10,10){\circle*{1.3}}
                     \end{picture}  }  &\multicolumn{1}{|c|}{$\underline{\mod}(S_3) \coprod \underline{\mod}(S_5)$ }\\
\multicolumn{1}{|c|}{$8(x^8+x^7+x^6+2x^4+x^2+x+1)$}  & \\
\hline
\multicolumn{1}{|c|}{   \setlength{\unitlength}{0.7mm}
                     \begin{picture}(0,15)
                     \put(-20,0){\circle*{1.3}}
                     \put(-20,0){\vector(1,0){9}}
                     \put(-10,0){\circle*{1.3}}
                     \put(-10,0){\line(1,0){10}}
                     \put(10,0){\circle*{1.3}}
                     \put(10,0){\vector(0,1){9}}
                     \put(10,10){\circle*{1.3}}
                     \put(10,10){\vector(-1,-1){9}}
                     \put(-10,10){\circle*{1.3}}
                     \put(-10,0){\vector(-1,1){9}}
                     \put(-20,10){\circle*{1.3}}
                     \put(-20,10){\vector(0,-1){9}}
                      \put(-10,0){\line(0,1){10}}
                      \put(0,0){\circle*{1.3}}
                      \put(0,0){\vector(0,1){9}}
                      \put(0,10){\circle*{1.3}}
                      \put(0,10){\vector(-1,-1){9}}
                      \put(0,0){\vector(1,0){9}}
                     \end{picture}  }  &\multicolumn{1}{|c|}{$\coprod_3 \underline{\mod}(S_3)$  }\\
\multicolumn{1}{|c|}{$8(x^8+2x^7+2x^4+2x+1)$}  & \\
\hline

\end{tabular}
\]

In fact, if $A$ is the representative of type $E_7$ with the associated polynomial
$6(x^7+x^5-x^2-1)$, which is denoted by $A_{104}$ in the second table above, then
\cite[Theorem 4.1]{Chen1} implies that $A_{104}$ is singularity equivalent to $A_{319}$ which is a cluster-tilted algebra of type $E_8$ \cite{GP}, whose quiver is as the following diagram shows. Since the associated polynomial of $A_{319}$ is $6(x^8+x^6+x^5+x^3+x^2+1)$, or seeing the supplementary material \cite{La2}, we get that $A_{319}$ is derived equivalent to $A_{15}$ in the third table above.
So $\underline{\Gproj}A_{104}\simeq \underline{\Gproj}A_{319}\simeq\underline{\Gproj}A_{15}\simeq \underline{\mod}S_7$.
Furthermore, the representatives of the other derived equivalence classes can be viewed as gluing of rooted quivers of type $A$
to certain skeleta which is a cluster-tilted algebra of type $A$ or $D$, their singularity categories can be obtained by Theorem \ref{theorem rooted quiver to type A} and Theorem \ref{theorem singularity category of D} directly.

\setlength{\unitlength}{0.7mm}
\begin{center}
\begin{picture}(100,60)(0,-20)
\put(30,0){\circle*{2}}
\put(30,0){\vector(0,1){19}}
\put(30,20){\circle*{2}}
\put(30,20){\vector(1,0){19}}
\put(50,20){\circle*{2}}
\put(50,20){\vector(0,-1){19}}
\put(50,0){\circle*{2}}
\put(50,0){\vector(-1,0){19}}

\put(40,30){\circle*{2}}
\put(40,30){\vector(-1,-1){9.7}}
\put(50,20){\vector(-1,1){9.7}}
\put(40,30){\vector(1,0){19}}
\put(60,30){\circle*{2}}
\put(60,30){\vector(-1,-1){9.7}}
\put(20,30){\circle*{2}}
\put(30,20){\vector(-1,1){9.7}}
\put(20,30){\vector(1,0){19}}

\put(27,-5){1}
\put(51,-5){4}
\put(27,16){2}
\put(51,16){5}
\put(17,31){3}
\put(38,31){6}
\put(60,31){7}

\put(60,10){\circle*{2}}
\put(60,10){\vector(0,1){19}}
\put(61,7){8}

\put(35,-15){$A_{319}$}

\end{picture}
\end{center}

From the above tables, we get the following result.

\begin{theorem}\label{theorem singularity category of E}
Two cluster-tilted algebras of Dynkin type $E$ are singularity equivalent if and only if the Cartan matrices of the algebras have the same determinant and the
same trace of their asymmetry matrices.
\end{theorem}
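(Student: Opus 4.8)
The plan is to reduce the statement to a finite verification over the representatives displayed in the three tables, using that derived equivalence implies singularity equivalence. First I would invoke the derived equivalence classification of \cite{BHL1}: every cluster-tilted algebra of Dynkin type $E$ is derived equivalent to exactly one representative listed in the tables. Since the associated polynomial $\det(C_A)\cdot\chi_{S_A}(x)$ is a derived invariant, so are its two top coefficients; writing $\chi_{S_A}(x)=x^n-\operatorname{tr}(S_A)x^{n-1}+\cdots$, the leading coefficient of the associated polynomial is $\det C_A$ and its subleading coefficient is $-\det(C_A)\operatorname{tr}(S_A)$, so the pair $(\det C_A,\operatorname{tr} S_A)$ can be read off directly and is constant on each derived equivalence class. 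Hence, for the stated biconditional, it suffices to treat the representatives, whose singularity categories have already been computed (via Theorem \ref{theorem rooted quiver to type A} and Theorem \ref{theorem singularity category of D}) and recorded in the last column as coproducts $\coprod_i \underline{\mod}(S_{l_i})$.

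Second, I would show that such a coproduct determines, and is determined by, the multiset $\{l_i\}$. The key point is that each $\underline{\mod}(S_l)$ is connected, since its stable Auslander--Reiten quiver is connected, and is therefore indecomposable with respect to the coproduct $\coprod$ of triangulated categories; moreover $\underline{\mod}(S_l)\simeq\underline{\mod}(S_{l'})$ forces $l=l'$, for instance because $\underline{\mod}(S_l)$ has exactly $l(l-2)$ indecomposable objects, a strictly increasing function of $l$. A Krull--Schmidt-type uniqueness for coproducts of connected triangulated categories then yields that $\coprod_i \underline{\mod}(S_{l_i})\simeq\coprod_j \underline{\mod}(S_{m_j})$ if and only if the multisets $\{l_i\}$ and $\{m_j\}$ coincide. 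Consequently two representatives are singularity equivalent precisely when their singularity categories share the same multiset of $S_l$-blocks.

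Finally, the remaining task is the finite bookkeeping across the three tables, checking both implications. For each representative I would compute $(\det C_A,\operatorname{tr} S_A)$ from the first two coefficients of its associated polynomial and compare with the block multiset in the last column: I must confirm that rows with the same pair always carry the same multiset, and that distinct multisets always produce distinct pairs. Particular care is needed for coincidences across the ranks $E_6,E_7,E_8$ — for example the trivial class with singularity category $0$ occurs for all three ranks and uniformly has $(\det C_A,\operatorname{tr} S_A)=(1,1)$, while the single block $\underline{\mod}(S_3)$ corresponds uniformly to $(2,0)$, so the claimed invariant is genuinely rank-independent. I expect this matching step to be the main obstacle: no individual computation is hard, but completeness of the invariant hinges on confirming, throughout all three tables \emph{simultaneously}, that no two rows with different block multisets accidentally agree on $(\det C_A,\operatorname{tr} S_A)$, and conversely that equal multisets never split into different pairs. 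Once this finite check is carried out, the biconditional follows for the representatives and hence, by the reduction of the first paragraph, for all cluster-tilted algebras of Dynkin type $E$.
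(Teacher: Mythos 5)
Your proposal is correct and follows essentially the same route as the paper: reduce to the tabulated representatives via the derived equivalence classification of \cite{BHL1} (derived equivalence implies singularity equivalence and fixes the pair $(\det C_A,\operatorname{tr} S_A)$, read off from the two top coefficients of the associated polynomial), then confirm by finite inspection of the three tables that this pair and the block multiset of the singularity category determine each other. The only substantive addition is your explicit argument that distinct multisets of blocks $\underline{\mod}(S_l)$ yield inequivalent categories (connectedness of each block plus the count $l(l-2)$ of indecomposables), which the paper leaves implicit in this proof and only touches on in the subsequent corollary; note as a minor point that with the convention $\chi_{S}(x)=\det(xI-S)$ the traces one actually reads off are $1,0,-1,-2$ rather than the $0,0,1,2$ recorded in the paper's summary table, a normalization discrepancy that does not affect the bijectivity of the correspondence.
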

\begin{proof}
We only need prove that for any cluster-tilted algebra $A$ of Dynkin type $E$, its singularity category is uniquely determined by
$\det(C_A)$ and the traces $\chi_A$ of its asymmetry matrix.
By a simple observation of the three tables above, we can get the following table, and from it, our desired statement follows immediately.

\[
\setlength{\tabcolsep}{2pt}
\renewcommand{\arraystretch}{1.2}
\begin{tabular}{|l|l|l|}
\hline
\multicolumn{1}{|c|}{\quad $\det(C_A)$ \quad } &\multicolumn{1}{|c|}{\quad $\chi_A$ \quad} &\multicolumn{1}{|c|}{$\underline{\Gproj}(A)$}\\
\hline
\multicolumn{1}{|c|}{1} &\multicolumn{1}{|c|}{0} &\multicolumn{1}{|c|}{$0$}\\
\hline
\multicolumn{1}{|c|}{2} &\multicolumn{1}{|c|}{0} &\multicolumn{1}{|c|}{$\underline{\mod}(S_3)$}\\
\hline
\multicolumn{1}{|c|}{3} &\multicolumn{1}{|c|}{0} &\multicolumn{1}{|c|}{$\underline{\mod}(S_4)$}\\
\hline
\multicolumn{1}{|c|}{4} &\multicolumn{1}{|c|}{0} &\multicolumn{1}{|c|}{$\underline{\mod}(S_5)$}\\
\hline
\multicolumn{1}{|c|}{4} &\multicolumn{1}{|c|}{1} &\multicolumn{1}{|c|}{$\underline{\mod} S_{3}\coprod \underline{\mod} S_{3}$}\\
\hline
\multicolumn{1}{|c|}{5} &\multicolumn{1}{|c|}{0} &\multicolumn{1}{|c|}{$\underline{\mod}(S_6)$}\\
\hline
\multicolumn{1}{|c|}{6} &\multicolumn{1}{|c|}{0} &\multicolumn{1}{|c|}{$\underline{\mod}(S_7)$}\\
\hline
\multicolumn{1}{|c|}{6} &\multicolumn{1}{|c|}{1} &\multicolumn{1}{|c|}{$\underline{\mod} S_{3}\coprod \underline{\mod} S_{4}$}\\
\hline
\multicolumn{1}{|c|}{8} &\multicolumn{1}{|c|}{1} &\multicolumn{1}{|c|}{$\underline{\mod} S_{3}\coprod\underline{\mod}(S_5)$}\\
\hline
\multicolumn{1}{|c|}{8} &\multicolumn{1}{|c|}{2} &\multicolumn{1}{|c|}{$\underline{\mod} S_{3}\coprod \underline{\mod} S_{3}\coprod \underline{\mod} S_{3}$}\\
\hline
\end{tabular}
\]
\end{proof}

\begin{corollary}
For any cluster-tilted algebra $A$ of Dynkin type $E$, there exists some selfinjective Nakayama
algebra $\Lambda$ such that $\underline{\Gproj}(A)\simeq \underline{\mod}(\Lambda)$. In particular, the number of the nontrivial connected components of $\Lambda$ is
$\chi_A+1$.
\end{corollary}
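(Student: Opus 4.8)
The plan is to deduce the statement from the classification already obtained in Theorem \ref{theorem singularity category of E} together with its tables; the only genuinely new ingredient is realizing a finite coproduct of stable categories as the stable category of a single algebra.

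First I would record the elementary stability facts. For finitely many self-injective Nakayama algebras $S_{l_1},\dots,S_{l_s}$, the product algebra $\Lambda=S_{l_1}\times\cdots\times S_{l_s}$ is again self-injective and Nakayama: its quiver is the disjoint union of the underlying cyclic quivers, and both the Frobenius property and the Nakayama property pass to finite products. Moreover $\mod\Lambda\simeq\prod_{i}\mod S_{l_i}$, and since the indecomposable projective-injectives of $\Lambda$ are exactly those of the factors, passing to the stable category yields an orthogonal decomposition
\[\underline{\mod}\Lambda\simeq\coprod_{i=1}^{s}\underline{\mod}S_{l_i}.\]

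Next I would invoke the tables of Theorem \ref{theorem singularity category of E}: for every cluster-tilted algebra $A$ of Dynkin type $E$, the singularity category $\underline{\Gproj}(A)$ is listed there as a finite coproduct $\coprod_{i=1}^{s}\underline{\mod}S_{l_i}$ with each $l_i\geq 3$, where in the degenerate case $\det C_A=1$ the coproduct is empty ($s=0$). Setting $\Lambda=\prod_{i=1}^{s}S_{l_i}$ (and $\Lambda$ semisimple, hence $\underline{\mod}\Lambda=0$, when $s=0$), the displayed decomposition gives $\underline{\mod}\Lambda\simeq\underline{\Gproj}(A)$, which is the desired self-injective Nakayama realization.

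Finally, for the numerical assertion I would count the nontrivial connected components of $\Lambda$, that is, those factors $S_{l_i}$ with nonzero stable category; since every factor occurring in the coproduct has $l_i\geq 3$ and is therefore nonsemisimple, this number is precisely $s$. It then remains to verify, row by row in the summary table appearing in the proof of Theorem \ref{theorem singularity category of E}, that $s=\chi_A+1$; here it is convenient to note the compact bookkeeping $\det C_A=\prod_{i=1}^{s}(l_i-1)$ (reflecting $\det C_{S_l}=l-1$ and the multiplicativity of the Cartan determinant over products), which in particular forces $s=0$ exactly when $\det C_A=1$. The only real obstacle is this final line-by-line check together with the correct treatment of the degenerate case $\det C_A=1$, where $\underline{\Gproj}(A)=0$ makes $s=0$ and the asymmetry invariant takes the boundary value giving $\chi_A+1=0$.
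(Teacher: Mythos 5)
Your construction of $\Lambda$ as the product $\prod_{i}S_{l_i}$ of the Nakayama blocks read off from the tables, together with the decomposition $\underline{\mod}\Lambda\simeq\coprod_i\underline{\mod}S_{l_i}$, is exactly the route the paper takes; the paper's proof just adds the remark that each $\underline{\mod}S_{l_i}$ with $l_i\geq 3$ has connected AR quiver and that connected components are invariants of triangle equivalences (which is what makes ``the number of nontrivial components of $\Lambda$'' independent of the chosen realization). So the first two thirds of your argument are fine and essentially coincide with the paper's.

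The problem is the final numerical step, precisely the one you defer to a ``row by row check'' and the degenerate case you yourself flag as the only obstacle. Your assertion that for $\det C_A=1$ ``the asymmetry invariant takes the boundary value giving $\chi_A+1=0$'', i.e.\ $\chi_A=-1$, is not something you can read off and is not justified: the paper's own summary table records $\chi_A=0$ in that row, and if $\chi_A$ is literally the trace of $S_A=C_AC_A^{-T}$ (as the proof of Theorem \ref{theorem singularity category of E} says), then the displayed characteristic polynomial $x^n-x^{n-1}+\cdots$ of the hereditary representative gives $\operatorname{tr}(S_A)=+1$, which is neither $0$ nor $-1$. Checking all ten rows against the listed polynomials, the number of nontrivial blocks equals $1-\operatorname{tr}(S_A)$, equivalently $c+1$ where $c$ is the coefficient of $x^{n-1}$ in the characteristic polynomial of $S_A$; so the formula $\chi_A+1$ is correct only under the convention $\chi_A=-\operatorname{tr}(S_A)$, and even then the table entry for $\det C_A=1$ must be $-1$ rather than the printed $0$. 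You need to fix the sign convention for $\chi_A$, recompute it for the hereditary row, and only then does the row-by-row verification close. As written, the ``boundary value'' claim is unsupported and in fact contradicts the very table you are citing, so the proof of the second assertion of the corollary is incomplete.
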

\begin{proof}
Note that the Auslander Reiten quiver of $\underline{\mod}S_i$ is connected for any $i\geq 3$. We get that the connected components of $\Lambda$ correspond to the connected components of $\underline{\mod}(\Lambda)$ and also $\underline{\Gproj}(A)$
since they are invariant under the triangulated equivalences. So the result follows from the table above in the proof of Theorem \ref{theorem singularity category of E} easily.
\end{proof}

\end{document}